\newtheorem{de}{Definition}
\newtheorem{prop}[de]{Proposition}
\newtheorem{lem}[de]{Lemma}
\newtheorem{cor}[de]{Corollary}
\newtheorem{rem}[de]{Remark}
\newtheorem{exam}[de]{Example}
		\newcommand{\Llb}[1]{\LeftLabel{{\sc (#1)}}}
\newcommand{\subdot}[1]{\oalign{$#1$\cr\hfil.\hfil}}
\newcommand\T{\scaleobj{0.9}{\mathrm{T}}}
\newcommand\lt{\mathcal{L}_{\rm T}}
\newcommand\lc{\mathcal{L}^{\rhd}}
\newcommand\lct{\mathcal{L}^{\rhd}_{\T}}
\newcommand{\restr}{\!\upharpoonright\!}
\newbox\gnBoxA
\newdimen\gnCornerHgt
\newdimen\gnArgHgt
\def\gn #1{%
\setbox\gnBoxA=\hbox{$#1$}%
\gnArgHgt=\ht\gnBoxA%
\ifnum     \gnArgHgt<\gnCornerHgt \gnArgHgt=0pt%
\else \advance \gnArgHgt by -\gnCornerHgt%
\fi \raise\gnArgHgt\hbox{$\ulcorner$} \box\gnBoxA %
\raise\gnArgHgt\hbox{$\urcorner$}}
	\newcommand{\draftversion}{
		\AddToShipoutPicture{%
			\setlength{\@tempdimb}{.5\paperwidth}%
			\setlength{\@tempdimc}{.5\paperheight}%
			\setlength{\unitlength}{1pt}%
			\protect{\put(\strip@pt\@tempdimb,\strip@pt\@tempdimc){%
					\makebox(-515,0){\rotatebox{90}{\textcolor[gray]{0.6}%
							{
								Please ask before citing -- email \href{mailto:johannes.stern@bristol.ac.uk}{Johannes} 
						}}}
					\makebox(515,0){\rotatebox{270}{\textcolor[gray]{0.6}%
							{
								{\Huge  Draft} \today 
							}  }}}}}
	}
\tikzset{
modal/.style={>=stealth?,shorten >=1pt,shorten <=1pt,auto,node distance=1.5cm,
semithick},
world/.style={circle,draw,minimum size=0.5cm,fill=gray!15},
point/.style={fill=black,circle,inner sep=1pt},
	arrow/.style={-{Latex[width=1mm]},shorten <=2pt,shorten >=2pt},
reflexive above/.style={->,loop,looseness=7,in=120,out=60},
reflexive below/.style={->,loop,looseness=7,in=240,out=300},
reflexive left/.style={->,loop,looseness=7,in=150,out=210},
reflexive right/.style={->,loop,looseness=7,in=30,out=330}
}
\DeclareSymbolFont{symbolsC}{U}{txsyc}{m}{n}
\DeclareMathSymbol{\strictif}{\mathrel}{symbolsC}{74}
\DeclareSymbolFont{symbolsC}{U}{txsyc}{m}{n}
\DeclareMathSymbol{\boxright}{\mathrel}{symbolsC}{128}
\begin{document}
\title{Adequate conditionals and Kripke's theory of truth}
\author{Johannes Stern\\Department of Philosophy\\University of Bristol\\johannes.stern@bristol.ac.uk}
\date{}
\maketitle

\begin{abstract}In this paper we show how to introduce a conditional  to Kripke's theory of truth that respects the deduction theorem for the consequence relation associated with the theory. To this effect we develop a novel supervaluational framework, called strong Kleene supervaluation, that we take to be a promising framework for handling the truth-conditions of non-monotone notion in the presence of semantic indeterminacy more generally.
\end{abstract}

\section{Introduction}
After almost 50 years of Kripke's {\it Outline of a Theory of Truth} Kripke's inductive definition of a self-applicable truth predicate remains one of the most prominent semantic approaches to truth. Of course, a lot of work has been carried out since: Gupta, Herzberger and Belnap have given us the Revision theory of truth \citep{her82a, gup82, beg93} while others have extended Kripke's proposal to tackle the revenge problem \citep[cf., e.g.,][]{gla04,sim18,schl10}; while still others have applied it to intensional languages \citep[cf., e.g.,][]{haw09,ste14b,ste15a,ste21}. An important limitation of Kripke's proposal is that it is difficult to introduce conditionals or restricted quantifiers such as ${\sf Every}$ or ${\sf Most}$ to Kripke's theory of truth, that is, to formalize forms of conditional reasoning within the Kripkean framework. The problem is due to the fact that the truth conditions of such notions are non-monotone and this seems to be non-accidental: changing the truth-conditions to restore monotonicity seems to trigger a loss of central characteristic features of these notions.\footnote{By monotonicity we refer to the idea that an increase in semantic information leads to an increase of sentences receiving a classical truth value.} There have been important contributions to this problem and in particular the problem of introducing conditionals to Kripke's theory of truth, and, more generally, theories of naive truth by, e.g., \cite{yab03,bea09,bac13,ros16,iaro24} and, notably, \cite{fie08,fie16,fie21}, but all these proposals sacrifice important logical properties of the conditional in favor of the so-called {\it transparency} of the truth predicate.\footnote{We refer to Section \ref{cktt} for a precise introduction of this terminology.} The aim of this paper is to develop a general strategy for simultaneously introducing different conditionals, restricted quantifiers and, more generally, non-monotone notions that, in contrast, preserves the salient logical properties but sacrifices the transparency of the truth predicate. For the purpose of this paper we focus on conditionals, although our strategy applies more widely. We show how to construct Kripkean truth models for languages that include logically adequate conditionals. We shall argue that this requires the conditional to respect the deduction theorem for the consequence relation associated with the truth theory. 

To this effect we develop a supervaluational semantics, which in contrast to orthodox supervaluational semantics, quantifies over partial precisification instead of classical precisification. On this semantics a conditional is true in a given partial model $M$, if it is true in all available partial models that are semantically compatible with $M$. The crux of the matter is then to specify the notions of available partial model and semantic compatibility. This leads us to the framework of {\it strong Kleene supervaluations} and to {\it strong Kleene supervaluation structures}. A strong Kleene supervaluation structure consists of a non-empty domain, a set of strong Kleene interpretations and an admissibility relation on these interpretations: a conditional sentence will be true in a strong Kleene structure relative to an interpretation $J$ iff it is true at all admissible interpretations $J'$ extending $J$.  Our strategy for handling conditionals shares important aspects with Kripke semantics for intuitionistic logic and, in particular, the truth conditions of  the intuitionistic conditional. Ultimately, our approach amounts to introducing an intuitionistic conditional to strong Kleene logic and strong Kleene supervaluation for a conditional, as we introduce it in this paper, turns out to be notational variant of constant domain Kripke semantics for ${\sf N3}$-Nelson logic as introduced by \cite{tho69}.\footnote{We refer to \cite{wan91,wan01} for an introduction and discussion of Nelson logic.}

The main task of the paper is to show how to construct strong Kleene supervaluation structures for a language with a self-applicable truth predicate that comprises interpretations at which $\varphi$ and $\T\gn\varphi$ receive the same truth value for every sentence $\varphi$. Establishing the existence of such interpretations shows that we can apply Kripke's theory of truth to non-classical logics that have an adequate conditional: by constructing a Kripkean truth model for a language with a conditional we show how to introduce an ${\sf N3}$-conditional to Kripke's theory of truth. Incidentally, our construction thus answers a question posed by Albert Visser who asks whether we can ``{\it extend (\dots) Kripke's solution} [Kripke's theory of truth, JS] {\it with, say, Thomasonian implication?}'' \citep[p.~645]{vis89}. However, we do not stop there. We also show how strong Kleene supervaluation can be combined with ordering semantic to provide adequate truth conditions for indicative and subjunctive conditionals in terms of strict or variably strict implication, that is, our strategy can be employed to give truth conditions for various natural language conditionals in the context of Kripke's theory of truth. Whilst in this paper we focus on various conditionals we think of strong Kleene supervaluation as a general strategy and framework for handling non-monotone notions such as restricted quantifiers in the presence of semantic indeterminacy broadly understood.

\paragraph{Plan of the paper}
We start our investigation by a brief discussion of Kripke's theory of truth and the challenge to Kripke's theory of truth posed by conditionals and conditional reasoning more generally (Section \ref{cktt}). In Section \ref{CDT} we discuss the dilemma Curry's paradox poses to truth theorist: either truth is not transparent or the conditional does not respect the deduction theorem. We take the latter to imply that the truth theory would lack and adequate conditional, which we take to be a non-negiotable requirement on any satisfactory truth theory. We then turn to ideas motivating the framework of strong Kleene supervaluation which we introduce in detail in Section \ref{sks}. In Section \ref{TS} we expand supervaluation structures for arbitrary first-order languages to provide an interpretation of the language extended by a self-applicable truth predicate. Section \ref{KTS} shows how to single out so-called Kripkean truth structures amongst the strong Kleene supervaluation structures of the language with a self-applicable truth predicate. Kripkean truth structures are truth structures that contain interpretations at which $\varphi$ and $\T\gn\varphi$ will have the same truth value for every sentence $\varphi$ of the language with the self-applicable truth predicate. In Section \ref{truththeo} we examine the properties of the truth predicate and the conditional in Kripkean truth structures, whilst Section \ref{complex} discusses the recursion-theoretic complexity of the construction. Up to this point our semantics was concerned with a logically adequate conditional for a non-classical logic, that is, a non-classical conditional that assumes the role material implication has for classical logic in the non-classical logic under consideration. In the final section of the paper, Section \ref{msks} we show how the framework can be extended to a semantics for indicative and/or subjunctive conditionals. This amounts to combining strong Kleene supervaluation with ordering semantics  and to show how the construction of Kripkean truth structures can be carried out in the extended framework.




\section{Conditionals and Kripke's Theory of Truth}\label{cktt}
Kripke's theory of truth \citep{kri75} seeks to define the interpretation of an object-linguistic, self-applicable truth predicate. Of course, due to \cite{tar35} we know that no classical model can satisfy the $\T$-scheme, that is, in no classical model is it possible that $\varphi$ and $\T\gn\varphi$ have the same semantic value for every sentence $\varphi$.\footnote{That is, modulo the type of construction given by \cite{gup82}.} Kripke's insight was that the latter (but not the former) can be achieved if we resort to non-classical models: relative to a non-classical models $\mathcal{M}$ one can find an interpretation of the truth predicate $S$ such that for all sentences $\varphi$ 
\begin{flalign*}&\tag{{\sc Naivity}}\label{fp}&&(M,S)\Vdash\T\gn\varphi\text{ iff }(M,S)\Vdash\varphi.&&\end{flalign*}
Models for which {\sc Naivity} holds will be called models of \emph{naive} truth. Throughout this paper it is important to distinguish {\sc Naivity} from {\sc Transparency}. Transparent truth models are  models of naive truth that satisfy the following stronger property:
\begin{flalign*}&\tag{{\sc Transparency}}\label{sfp}&&(M,S)\Vdash\psi(\T\gn\varphi/\chi)\text{ iff }(M,S)\Vdash\psi(\varphi/\chi).&&\end{flalign*}
Roughly put {\sc Transparency} is {\sc Naivity} plus  $M$-extensionality, that is, the assumption that the truth value of a sentence in the model  $(M,S)$ only depends on the truth values of its subsentences in $(M,S)$. As we discuss below, {\sc Transparency} holds for some but not all variants of Kripke's theory of truth.

The standard strategy of showing the existence of naive (transparent) truth models is by defining a suitable monotone operation on $\mathcal{P}({\sf Sent})$ relative to some ground model $M$. Such an operation is also called a {\it Kripke jump}. There is some flexibility as to which logic and evaluation scheme is employed, but the most prominent variant of the Kripke jump is the operation
$$\mathcal{K}_{3}(S)=\{\varphi\,|\,(M,S)\Vdash_{\sf K3}\varphi\},$$ 
that is, the operation $\mathcal{K}_{3}$ applied to a set of sentences $S$ yields the set of sentences that are true in the model $(M,S)$ according to the strong Kleene scheme. A cardinality argument then establishes that the operation must have a fixed point, indeed, a minimal fixed point and if the operation has been suitably defined these fixed points yield attractive interpretations of truth predicate over $M$: if $S$ is fixed point, then $(M,S)$ is a naive truth model.\footnote{Monotonicity guarantees the existence of fixed points. It does not guarantee the existence of non-trivial fixed points. We will come back to this issue at a later stage.} Instead of using a simple cardinality argument the existence of fixed point can also be shown using an algebraic argument, that is, the Knaster-Tarski fixed-point theorem and variants thereof \citep[cf.][]{vis89}. This approach does not only establish the existence of a minimal fixed point, but also provides valuable information about the structure of the different fixed points of the operation.

One principal downside of Kripke's theory of truth is already implicit in the formulation of its positive result, that is, the existence of a naive truth model: the equivalence of $\varphi$ and $\T\gn\varphi$ relative to a model $(M,S)$ cannot be stated in the objectlanguage. There is no conditional that enables us to express the metalinguistic `iff' in the objectlanguage. Of course, this is no mere coincident as such a conditional would re-introduce the Liar paradox. More importantly, the partial logics/semantics employed in the context of Kripke's theory of truth do not comprise an attractive conditional that adequately captures conditional reasoning in language and logic. It is arguably for that reason that \cite[p.~95]{fef84} famously complained that in such logics ``{\it no sustained ordinary reasoning can be carried on}'' \citep{fef84}. A theory of truth without an adequate conditional, that is, a conditional that enables us to carry out conditional reasoning is left wanting. So the question arises how an adequate conditional can be integrated with Kripke's theory of truth and what it takes for a conditional to be adequate.

\section{Conditionals, Truth and the Deduction Theorem}\label{CDT}
In natural and formal languages there is a plethora of different conditionals and before we can start answering the above question we need to clarify the scope of our investigation. To this effect we distinguish between an adequate conditional for a given logic, which we take to be key for carrying on conditional reasoning and conditionals such as indicative or subjunctive conditionals that are important from the perspective of natural language and/or philosophical semantics. Initially, our focus will be on the former, i.e., on introducing a suitable logical conditional to Kripke's theory of truth. However, as we show in Section \ref{msks} that once we have introduced such a logical conditional, it is rather straightforward to extend the framework by indicative and subjunctive conditionals.

Arguably, conditional reasoning is aptly characterized by two rules, that is, by the conditional introduction and the conditional elimination rule:
\begin{description}
\item[Conditional Introduction]If, given a set of formulas $\Gamma$, the formula $\psi$ follows from the formula $\varphi$ in the logic $L$, then the conditional $\varphi\rightarrow\psi$ follows from $\Gamma$ in $L$.
\item[Conditional Elimination]If  $\varphi\rightarrow\psi$ follows from a set of formulas $\Gamma$ in $L$, then $\psi$ follows from $\Gamma$ and $\varphi$.
\end{description}
If a logic $L$ is equipped with a conditional for which both conditional introduction and conditional elimination hold, then the logic $L$ has a conditional that respects the deduction theorem. Let $L$ be a logic and $\vDash_L$ the consequence relation of $L$. Then $\rightarrow$ satisfies the deduction theorem for $L$ iff
\begin{align*}\Gamma,\varphi\vDash_L\psi&\,\,{\rm iff}\,\,\Gamma\vDash_{L}\varphi\rightarrow\psi.\footnote{$\varphi$ and $\psi$ are formulas of the language and $\Gamma$ is a set of formulas.}  \end{align*}
On the view we endorse in this paper a conditional that violates either conditional introduction or conditional elimination in a logic $L$ does not adequately capture conditional reasoning in $L$; it is not an adequate conditional for $L$. Accordingly, a conditional is adequate conditional for a logic $L$, if and only if, it respects the deduction theorem for $L$.

Yet, a conditional that respects the deduction theorem for the given consequence relation of a logic $L$ imposes severe limitation on the properties a truth predicate can have in theories, if the consequences relation of that logic is a structural consequence relation. Let $\vDash_L$ be the consequence relation of some logic. Call $\vDash_L$ a {\it naive consequence relation} iff 
\begin{flalign*}
&(\T\text{\sc l})&&\Gamma,\varphi\vDash_L\Delta\text{ iff }\Gamma,\T\gn\varphi\vDash_L\Delta&&\\
&(\T\text{\sc r})&&\Gamma\vDash_L\varphi,\Delta\text{ iff }\Gamma\vDash_L\T\gn\varphi,\Delta&&
\end{flalign*}
for all sentences $\varphi$ and sets of sentences $\Gamma,\Delta$. Moreover, $\vDash_L$ is a structural consequence relation iff it has the following properties for $\Gamma,\Delta\subseteq{\sf Frml}$ and $\varphi\in{\sf Frml}$:\footnote{We assume a multiconclusion formulation of the consequence relation. Yet nothing hinges on this assumption. Everything we say goes through if we work with single conclusions.} 
\begin{flalign*}
&({\sf Ref})&&\Gamma, \varphi\vDash_L\varphi,\Delta;&&\\
&({\sf Weak})&&\text{if }\Gamma\vDash_L\Delta,\text{ then }\Gamma,\Gamma'\vDash_L\Delta,\Delta';&&\\
&({\sf Contr})&&\text{if }\Gamma,\varphi,\varphi\vDash_L\Delta,\text{ then }\Gamma,\varphi\vDash_L\Gamma;&&\\
&({\sf Trans})&&\text{if }\Gamma\vDash_L\varphi,\Delta\,\&\,\Gamma,\varphi\vDash_L\Delta\text{ then }\Gamma\vDash_L\Delta.&&
\end{flalign*}

The proposition below shows that we cannot have both the Deduction theorem, {\it and} a structural and naive consequence relation. This is one guise of so-called Curry's paradox:

\begin{prop}[Curry]\label{cur}Let $\vDash_L$ be a structural consequence relation for a (non-trivial) logic $L$ and $\rightarrow$ a conditional for which the deduction theorem $({\sf Ded})$ holds relative to $\vDash_L$. Then $\vDash_L$ cannot be a naive consequence relation.
\end{prop}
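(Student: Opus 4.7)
The plan is to run the standard self-referential Curry argument inside the abstract consequence relation $\vDash_L$, using only the listed structural rules together with $(\T\text{l})$, $(\T\text{r})$ and both directions of $({\sf Ded})$, and to derive from the assumptions that $\vDash_L \varphi$ holds for an arbitrary sentence $\varphi$, contradicting non-triviality.

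First I would invoke the diagonalisation apparatus that is in any case presupposed for the language of truth: for an arbitrary sentence $\varphi$, fix a Curry sentence $C$ that is (literally, via fixed-point construction) the sentence $\T\gn{C}\rightarrow\varphi$. All the work will then be to show that for this particular $C$ and $\varphi$ we have $\vDash_L\varphi$.

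The main chain of reasoning I have in mind is:
\begin{align*}
C &\vDash_L \T\gn{C}\rightarrow\varphi &&\text{by }({\sf Ref})\text{, since } C=\T\gn{C}\rightarrow\varphi,\\
C,\T\gn{C} &\vDash_L \varphi &&\text{by the right-to-left direction of }({\sf Ded}),\\
\T\gn{C},\T\gn{C} &\vDash_L \varphi &&\text{by }(\T\text{\sc l})\text{, replacing }C\text{ with }\T\gn{C},\\
\T\gn{C} &\vDash_L \varphi &&\text{by }({\sf Contr}),\\
{}&\vDash_L \T\gn{C}\rightarrow\varphi &&\text{by the left-to-right direction of }({\sf Ded}),\\
{}&\vDash_L \T\gn{C} &&\text{by }(\T\text{\sc r})\text{ applied to }\vDash_L C,\\
{}&\vDash_L \varphi &&\text{by }({\sf Trans})\text{ with the previous two lines.}
\end{align*}
Since $\varphi$ was arbitrary, $\vDash_L$ would be trivial, contradicting the non-triviality of $L$. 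Hence under the stated structural assumptions at least one of $(\T\text{\sc l})$ and $(\T\text{\sc r})$ must fail, i.e.\ $\vDash_L$ cannot be a naive consequence relation.

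The delicate point (and the only real obstacle) is the diagonalisation step: strictly speaking, what one obtains is a sentence $C$ provably equivalent to $\T\gn{C}\rightarrow\varphi$ rather than identical to it, and transferring that equivalence through a consequence relation requires some principle of intersubstitutivity. I would handle this either by working in a setting rich enough to give a genuine syntactic fixed point, or by noting that the naivity rules together with $({\sf Trans})$ suffice to transport premises and conclusions across the biconditional at each step. Beyond that point the argument is purely combinatorial, so I expect no further difficulty.
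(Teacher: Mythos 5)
Your proposal is correct and follows essentially the same route as the paper's proof: the same Curry chain of $({\sf Ref})$, definition of the Curry sentence, $(\T\text{\sc l})$, both directions of $({\sf Ded})$, $({\sf Contr})$, $(\T\text{\sc r})$ and $({\sf Trans})$, with the only cosmetic difference that you derive an arbitrary $\varphi$ where the paper fixes $\bot$. The paper, like your first option, simply takes $\kappa$ to be a genuine syntactic fixed point ($\kappa$ \emph{is} the sentence $\T\gn\kappa\rightarrow\bot$), so no intersubstitutivity principle is needed.
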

\begin{proof}Let $\kappa$ be the sentence $\T\gn\kappa\rightarrow\bot$. Then
\begin{enumerate}
\item $\kappa\vDash\kappa\hfill({\sf Ref})$
\item $\kappa\vDash\T\gn\kappa\rightarrow\bot\hfill 1,\text{ Def. }\kappa$
\item $\T\gn\kappa\vDash\T\gn\kappa\rightarrow\bot\hfill 2,(\T\text{\sc l})$
\item $\T\gn\kappa,\T\gn\kappa\vDash\bot\hfill 3,{\sf Ded}$
\item $\T\gn\kappa\vDash\bot\hfill 4, ({\sf Contr})$
\item $\vDash\T\gn{\kappa}\rightarrow\bot\hfill 5,{\sf Ded}$
\item $\vDash\kappa\hfill 6,\text{ Def. }\kappa$
\item $\vDash\T\gn\kappa\hfill 7,(\T\text{\sc r})$
\item $\vDash\bot\hfill 5, 8, ({\sf Trans})$
\end{enumerate}
\end{proof}
Proposition \ref{cur} is deeply troubling. It shows that we cannot have it both ways: either $\varphi$ and $\T\gn\varphi$ must somewhat come apart or we must accept a conditional that, at least from a logical perspective, is inadequate and arguably not up for the task of capturing conditional reasoning. Some theorist have taken Proposition \ref{cur} to show that we should adopt so-called substructural logics, that is, logics with consequence relations that fail to meet one of $({\sf Ref})$, $({\sf Contr})$, or $({\sf Trans})$.\footnote{The structural rule of $({\sf Weak})$ was not used in the proof of Proposition \ref{cur}.} As interesting as these approaches may be, we shall not pursue it here, but take the structural rules to be non-negotiable.\footnote{Readers interested in substructural logics are referred to \cite{pao13,res02}, and to \cite{bgr18} for an introduction to substructural ``solutions'' to the semantic paradoxes.}

A number of truth-disquotationalist have argued that truth is a logical notion that is fully characterized by the truth introduction and elimination rules.\footnote{See, e.g., \cite{bea21} for a pronounced formulations of this view.} Taking this idea seriously we can, somewhat tongue in cheek, give the following pointed formulation of the dilemma posed to us by Curry's paradox: {\it either truth is not a logical notion or there is no logically adequate conditional}. Perhaps surprisingly, in light of this dilemma many philosophers have come down in favor of the logicality of truth and have sacrificed salient properties of the conditional, that is, in most cases the rule of conditional introduction.\footnote{These include \cite{fie08,fie16,fie21}, \cite{yab03}, \cite{ros16,iaro24}, \cite{bea09}, and \cite{bac13} to name only few.} In this paper, we investigate the alternative option and develop a conditional for which the deduction theorem holds relative to a suitable non-naive consequence relation. 

In the discussion of Curry's paradox we left open how we are to understand the consequence relation of the given logic $L$. We now focus on the standard semantic definition of the consequence relation as truth preservation relative to a suitable model class, i.e., $\Gamma\vDash_L\Delta$ iff for all $M\in\mathfrak{M}_L:$
\begin{align*}\text{ if }M\Vdash\bigwedge\Gamma,&\text{ then }M\Vdash\bigvee\Delta.\end{align*}
In light of Curry's paradox we can conclude that we can only find an adequate conditional for $\vDash_L$ iff the model class $\mathfrak{M}_L$ also contains non-naive truth models, that is, models for which there are sentences $\psi$ such that $\psi$ and $\T\gn\psi$ are not semantically equivalent. However, Curry's paradox does not rule out the possibility that $\mathfrak{M}_L$ contains some naive truth models. Indeed, we shall show how to construct a naive truth model $M$ with an associated model class $\mathfrak{M}$ for a logic with an adequate conditional such that $M\in \mathfrak{M}$ and in all non-naive models $M'\in\mathfrak{M}$ the truth predicate will still satisfy desirable truth-like properties. 

The principal upshot of Curry's paradox then is that the conditional cannot express truth preservation relative to a collection of naive truth models, which, in turn, rules out the possibility of a transparent truth predicate. As a direct consequence we also know that the truth conditions of a sentence $\varphi\rightarrow\psi$ in a model $M$ cannot be given in terms of truth preservation in $M$. More generally, due to Curry's paradox the truth conditions of $\varphi\rightarrow\psi$ cannot be specified solely in terms of the truth values of $\varphi$ and $\psi$ in $M$, for otherwise the conditional would constitute a transparent context in a naive truth model, which, again, is ruled out by Curry's paradox. The upshot then is that the truth conditions of a conditional $\varphi\rightarrow\psi$ in a model $M$ need to resort to a collection of models associated with the given model $M$ and this collection needs to contain non-naive models. But how should the collection of models relevant for the evaluation of a conditional  in $M$, i.e., the models associated with $M$ be determined?

\subsection{Towards Truth-conditions}
Up to this point we have not focused on a particular logic but from now we focus on strong Kleene logic ${\sf K3}$. Our ultimate aim is to introduce an adequate conditional to ${\sf K3}$ and then show how to construct a naive truth model in the resulting logic/framework. ${\sf K3}$ allows for truth value gaps but rules out truth value gluts. This shows in the fact that explosion is a valid rule of inference for ${\sf K3}$.\footnote{A sequent calculus for ${\sf K3}$ is given in the appendix to this paper.} In light of our discussion at the end of the previous section, the key question is to determine the class of models associated with a given partial model $M$. An immediate answer is to single out those models that are semantically compatible with $M$. A model $M'$ is semantically compatible with $M$, if $M'$ is not in conflict with the semantic information provided by $M$, that is, if a formula $\varphi$ receives a classical truth value in $M$, then it will receive the same truth value in $M'$. Put differently a model is compatible with $M$ iff it provides at least as much semantic information as $M$ and disagrees with $M$ at most on those sentences that do not receive a classical truth value in $M$ . Let us write $M\leq M'$ to indicate that $M'$ is semantically compatible with $M$---notice that semantic compatibility is not a symmetric notion. Then at a first brush we propose the following truth conditions for a conditional $\varphi\rightarrow\psi$ in a model $M$:
\begin{align*}M\vDash\varphi\rightarrow\psi&\,\,{\rm iff}\text{ for all }M'\text{ s.~t. }M\geq M':\text{ if }M'\Vdash\varphi,\text{ then }M'\Vdash\psi.\end{align*}
Our talk of semantic compatibility may remind readers of supervaluational semantics where, indeed, one quantifies over models that are semantically compatibility with the given partial model. However, in orthodox supervaluational semantics one quantifies over classical models, whereas we think of semantic compatibility as a relation between partial models. In what is to come we shall generalize the above truth conditons, that is, strong Kleene supervaluation in the following two ways:
\begin{enumerate}
\item we do not quantify over all semantically compatible models but only those of a given set $X$ of models of the language;
\item we generalize the idea of semantic compatibility to that of an admissibility relation on the set $X$.
\end{enumerate}
We are ultimately led to so-called strong Kleene supervaluation structures (cf.~Section \ref{sks}) and the truth conditions for the conditional in such structures look very much like the truth conditions of an intuitionistic conditional in a Kripke frame for intuitionistic logic. Indeed, our proposal can be seen as introducing an intuitionistic conditional to strong Kleene logic ${\sf K3}$. It turns out that the resulting logic is known as three-valued Nelson logic {\sf N3} \citep[see, e.g.~][]{tho69,wan91,wan01} and enjoys a soundness and completeness result relative to a constant domain Kripke semantics for intuitionistic logic \citep[see][]{tho69}. Importantly, the deduction theorem holds for the ${\sf N3}$-conditional. Of course, it then remains to show how to find naive truth models. Before we turn to introducing strong Kleene supervaluation in earnest, we repeat a point made in the Introduction to this paper, namely, that strong Kleene supervaluation serves as a general framework for obtaining adequate truth conditions for non-monotone notions in the presence of semantic indeterminacy. For example, as we discuss elsewhere, it can be applied to obtain a neat semantics for $\langle 1,1\rangle$-quantifiers in partial semantics. Yet, as mentioned, in this paper we focus our attention on conditionals.

\section{Strong Kleene Supervaluation}\label{sks}
In this section we introduce the formal framework. We start by introducing the formal syntax of our language $\mathcal{L}$. We assume the language to consist of
\begin{itemize}
\item a denumerable set of variables $v_0,v_1,v_2\ldots$ denoted ${\sf Var}_\mathcal{L}$;
\item a denumerable set of individual constants $c_0,c_1,c_2,\ldots$ denoted ${\sf Cons}_\mathcal{L}$;
\item a denumerable set of function symbols $f^{n_0}_0,f^{n_1}_1,f^{n_2}_2,\ldots$ with denoted ${\sf Funct}_\mathcal{L}$ such that $n_j$ denotes the arity of $f_j$ with $n_j,n,j\in\omega$;
\item a denumerable set of function symbols $P^{n_0}_0,P^{n_1}_1,P^{n_2}_2,\ldots$ with denoted ${\sf Pred}_\mathcal{L}$ such that $n_j$ denotes the arity of $P_j$ with $n_j,n,j\in\omega$;
\item the identity symbol $=$;
\item the logical vocabulary $\neg,\wedge,\rightarrow$.
\end{itemize}

\begin{de}[Term, Formula]
The set of well-formed terms ${\sf Term}_\mathcal{L}$ is specified as follows
$$t_1,\ldots,t_n::=v_j\,|\,c_k\,|\,f_m^n(t_1,\ldots,t_n)$$
with $n,j,k,m\in\omega$, $v_j\in{\sf Var}_\mathcal{L}$, $c_k\in{\sf Cons}_\mathcal{L}$, and $f^n_m\in {\sf Funct}_\mathcal{L}$. The set of well-formed formulas ${\sf Form}_\mathcal{L}$ is specified as follows:
$$\varphi::=(s=t)\,|\,P_j^m(t_1,\ldots,t_m)\,|\,\neg\varphi\,|\,\varphi\wedge\varphi\,|\,\varphi\rightarrow\varphi\,|\,\forall x\varphi$$
with $j,m\in\omega$, $s,t,t_1,\ldots,t_m\in{\sf Term}_\mathcal{L}$, and $x\in{\sf Var}_\mathcal{L}$.
\end{de}
We shall make use of further logical symbols which, however, we conceive of mere notational abbreviations. In particular, we set
\begin{align*}
&\varphi\vee\psi:=\neg(\neg\varphi\wedge\neg\psi)\\
&\exists x\varphi:=\neg\forall x\neg\varphi.
\end{align*}

With the syntax in place we start introducing the semantics and the notion of a (strong Kleene) supervaluation structure which will consist of a non-empty domain and set $X$ of interpretation functions ordered by an admissibility relation $H$. First, we specify the notion of a (partial) interpretation on a non-empty domain.

\begin{de}[Interpretation, Ordering]\label{kinto}Let $D\neq\emptyset$. A function $I$ for some language $\mathcal{L}$ is called a ${\sf k}$-interpretation iff
\begin{enumerate}[label=(\roman*)]
\item $I(t)\in D$ for all $t\in {\sf Const}_\mathcal{L}$;
\item $I(f^n)\in^{D^n}\!\!D$ for all $f\in{\sf Funct}_\mathcal{L}$;
\item $I(P^n)\in \mathcal{P}(D^n)\times\mathcal{P}(D^n)$;
\item $I^+(P^n)\cap I^-(P)=\emptyset$.
\end{enumerate}
The set of all interpretations over $D$ is denoted by ${\sf Int}_D$. For $I,J\in{\sf Int}_D$ we say $I\leq J$ iff $I^+(P)\subseteq J^+(P)$ and $I^-(P)\subseteq J^-(P)$ for all $P\in{\sf Pred}$.
\end{de}

\begin{de}[Supervaluation structure]\label{SVS}A tuple $\mathfrak{M}=(D,X,H)$ is called a supervaluation structure iff
\begin{enumerate}[label=(\roman*)]
\item $D$ is a nonempty domain of individuals;
\item $X\subseteq{\sf Int}_D$ such that for all $J,I\in X$
\begin{itemize}
\item $J(c)=I(c)$ for $c\in{\sf Const}_{\mathcal{L}}$;
\item $J(f)(d_1,\ldots,d_n)=I(f)(d_1,\ldots,d_n)$ for $f\in{\sf Func}_\mathcal{L}$, $d_1,\ldots,d_n\in D$, and $n\in\omega$;
\end{itemize}
\item $H\subseteq X\times X$ such that
\begin{itemize}
\item $H$ is reflexive and transitive;
\item if $(x,y)\in H$, then $x\leq y$.
\end{itemize}
\end{enumerate}
\end{de}
It remains explaining the notion of truth in a supervalutation structure relative to a given interpretation. To this effect we need to appeal to variable assignments.

\begin{de}[Assignment]\label{ass}Let $D\neq\emptyset$. An assignment on $D$ is a function $\beta:{\sf Var}\rightarrow D$ that assigns to the variables of the language objects in $D$. Let $\beta$ be an assignment and $d\in D$. Then $\beta(x:d)$ such that 
\begin{align*}\beta(x:d)(v):=\begin{cases}d,&\text{ if }x\doteq v\\\beta(v),&\text{otherwise}.\end{cases}\end{align*}
is an $x$-variant of $\beta$.\end{de}

We now specify the denotation of a term in a supervaluation structure relative to an assignment.

\begin{de}[Denotation in a supervaluation structure]\label{Den}Let $\mathfrak{M}=(D, X,H)$ be a supervaluation structure and $\beta$ an assignment on $F$. Then for $J\in X$ the denotation of a term $t$ under the assignment $\beta$ is defined as follows:
\begin{align*}J^\beta(t):=\begin{cases}\beta(t),&\text{ if }t\in{\sf Var}_\mathcal{L}\\J(t),&\text{ if }t\in{\sf Const}_\mathcal{L}\\J(f^n)(J^\beta(t_1),\ldots,J^\beta(t_n)),&\text{ if }t\doteq f^n(t_1,\ldots,t_n).\end{cases}\end{align*}
\end{de}
Notice that $J^\beta(t)=I^\beta(t)$ for all $J,I\in X$ and variable assignments $\beta$.

\begin{de}[Truth in supervaluation structure]\label{TSV}Let $\mathfrak{M}=(D,X,H)$ be a supervaluation structure with $J\in X$ and $\beta$ a variable assignment. Then a formula $\varphi$ is true in $\mathfrak{M}$ at $J$ under assignment $\beta$ ($\mathfrak{M},J\Vdash\varphi[\beta]$) iff
\begin{align*}
&J^{\beta}(t_1),\ldots,J^{\beta)}(t_n)\rangle\in J^+(P^n),&&\text{ if }\varphi\doteq (P^n t_1,\ldots,t_n);\\
&J^{\beta}(t_1),\ldots,J^{\beta}(t_n)\rangle\in J^-(P^n),&&\text{ if }\varphi\doteq (\neg P^n t_1,\ldots,t_n);\\
&J^{\beta} (s)= J^{\beta} (t),&&\text{ if }\varphi\doteq(s=t);\\
&J^{\beta} (s)\neq J^{\beta} (t),&&\text{ if }\varphi\doteq(\neg(s=t));\\
&\mathfrak{M},J\Vdash\psi[\beta]&&\text{ if }\varphi\doteq\neg\neg\psi;\\
&\mathfrak{M},J\Vdash\psi[\beta]\,\&\,\mathfrak{M},J\Vdash\chi[\beta],&&\text{ if }\varphi\doteq(\psi\wedge\chi);\\
&\mathfrak{M},J\Vdash\neg\psi[\beta]\text{ or }\mathfrak{M},J\Vdash\neg\chi[\beta],&&\text{ if }\varphi\doteq(\neg(\psi\wedge\chi));\\
&\forall J'((J,J')\in H\Rightarrow \mathfrak{M},J\not\Vdash\psi[\beta]\text{ or }\mathfrak{M},J\Vdash\chi[\beta],&& \text{ if }\varphi\doteq(\psi\rightarrow\chi));\\
&\mathfrak{M},J\Vdash\psi[\beta]\,\&\,\mathfrak{M},J\Vdash\neg\chi[\beta],&& \text{ if }\varphi\doteq\neg(\psi\rightarrow\chi);\\
&\mathfrak{M},J\Vdash\psi[\beta(x:d)]\text{ for all }d\in D,&&\text{ if }\varphi\doteq\forall x\psi;\\
&\mathfrak{M},J\Vdash\neg\psi[\beta(x:d)]\text{ for some }d\in D,&&\text{ if }\varphi\doteq\neg\forall x\psi.
\end{align*}
We say that $\varphi$ is true at $J$ $(\mathfrak{M},J\Vdash\varphi)$ iff $\mathfrak{M},J\Vdash\varphi[\beta]$ for all assignments $\beta$. Finally, a formula is said to be true in $\mathfrak{M}$ ($\mathfrak{M}\Vdash\varphi$) iff $\mathfrak{M},J\Vdash\varphi$ for all $J\in X$.
\end{de}

As mentioned  previously truth in a supervaluation structure as specified in Definition \ref{TSV} coincides with truth in Kripke frames for constant domain Nelson logic ${\sf N3}$ \citep{tho69,wan91} with classical identity.\footnote{For a presentation of the various logics appealed to in this paper in form of sequent calculi we refer the reader to Appendix \ref{SeqC}. As a guideline we point out that if $\Gamma\Rightarrow\Delta$ is a valid ${\sf K3}$-sequent, then $\bigwedge\Gamma\rightarrow\bigvee\Delta$ is an ${\sf N3}$-logical truth, where $\Gamma$ and $\Delta$ are finite sets of formulae.}

Thus, unsurprisingly, we obtain the following monotonicity/persistence property.

\begin{lem}\label{pers}Let $\mathfrak{M}=(D,X,H)$ be a supervaluation structure, $\beta$ a variable assignment, and $J,J'\in X$ such that $(J,J')\in H$.Then for all $\varphi\in\mathcal{L}$
\begin{align*}\text{if }\mathfrak{M},J\Vdash\varphi[\beta],\text{ then }\mathfrak{M},J'\Vdash\varphi[\beta].\end{align*}
\end{lem}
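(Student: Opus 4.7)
The plan is a standard induction on the complexity of $\varphi$, exploiting three structural facts about supervaluation structures: (i) denotations of terms are constant across $X$, so $J^{\beta}(t)=J'^{\beta}(t)$ whenever $J,J'\in X$; (ii) the condition $(J,J')\in H$ entails $J\leq J'$, i.e., $J^{+}(P)\subseteq J'^{+}(P)$ and $J^{-}(P)\subseteq J'^{-}(P)$; and (iii) $H$ is reflexive and transitive. Because negation is not uniformly pushed inward by the clauses of Definition \ref{TSV}, the induction must be carried out on formulas in a slightly richer signature: I would treat each clause of the truth definition as its own ``case'', pairing each connective with its negated form (so the inductive hypothesis has to be stated for both $\psi$ and $\neg\psi$ at each smaller level, which is automatic since both are shorter or have shorter immediate subformulas).

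For the base cases, atomic formulas $P^{n}(t_{1},\ldots,t_{n})$ and their negations $\neg P^{n}(t_{1},\ldots,t_{n})$ transfer from $J$ to $J'$ directly using (i) and (ii). Identity $s=t$ and its negation transfer using only (i). The propositional cases $\neg\neg\psi$, $\psi\wedge\chi$, and $\neg(\psi\wedge\chi)$ reduce at once to the inductive hypothesis applied to the (negated) subformulas, and similarly the quantifier cases $\forall x\psi$ and $\neg\forall x\psi$ follow by applying the inductive hypothesis at each variant assignment $\beta(x:d)$, using that the domain $D$ is shared.

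The only nontrivial case is the conditional $\psi\rightarrow\chi$, and this is where the specific shape of the admissibility relation matters. Suppose $\mathfrak{M},J\Vdash(\psi\rightarrow\chi)[\beta]$ and $(J,J')\in H$; I must show that for every $J''$ with $(J',J'')\in H$, either $\mathfrak{M},J''\not\Vdash\psi[\beta]$ or $\mathfrak{M},J''\Vdash\chi[\beta]$. By transitivity of $H$ we have $(J,J'')\in H$, and the assumption at $J$ delivers exactly this disjunction at $J''$. The mirror case $\neg(\psi\rightarrow\chi)$ is easy: its truth at $J$ reduces to the truth of $\psi$ and $\neg\chi$ at $J$, and these persist to $J'$ by the inductive hypothesis, yielding $\mathfrak{M},J'\Vdash\neg(\psi\rightarrow\chi)[\beta]$.

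The main (and really only) obstacle is getting the conditional clause right; the remainder is bookkeeping. It is worth flagging that transitivity of $H$ is essential in that step, while reflexivity is what ensures the clause is nonvacuous but plays no role in the induction itself. No further clauses need be checked because the defined connectives $\vee$ and $\exists$ are notational abbreviations covered by the cases for $\neg$, $\wedge$, and $\forall$.
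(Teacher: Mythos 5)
Your proof is correct and follows the same route as the paper: an induction on (positive) complexity in which the only substantive case is the conditional, handled exactly as you do by using transitivity of $H$ to conclude that every $J''$ accessible from $J'$ is already accessible from $J$. The remaining cases you spell out (atomic, identity, negated forms, quantifiers, and the local falsity clause for the conditional) are the routine bookkeeping the paper leaves implicit.
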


\begin{proof}The claim is established by an induction on the positive complexity of $\varphi$. The only interesting case is that of the conditional. The claim follows because due to the transitivity of $H$ we obtain:
$$\{I\in X\,|\,(J',I)\in H\}\subseteq\{I\in X\,|\,(J,I)\in H\}.$$
\end{proof}

The main difference between the Nelson logic and the present framework is that in our case interpretations serve, in some sense, a double purpose: they serve as index/parameter in the semantic explanation whilst, at the same time, providing the interpretation of the descriptive vocabulary. In contrast, in Kripke frames these two roles are kept apart. Worlds serve as parameter in the semantic explanation, whilst an interpretation on the frame determines the extension of the descriptive vocabulary relative to a world. Indeed, this seems to be a good general characterization of the conceptual difference between many forms of supervaluation and  possible world semantics.

 Nelson logic can be understood as extending strong Kleene logic ${\sf K3}$ by an intuitionistic conditional and fittingly the Nelson conditional does not allow for contraposition.\footnote{In ${\sf K3}$ the sequent $\varphi,\neg\varphi\Rightarrow\bot$ is provable, but it's contraposition $\top\Rightarrow\varphi,\neg\varphi$ is not ${\sf K3}$-valid.} The failure of contraposition is not a mere accident but a consequence of the extensional (local) refutation condition of the conditional and double negation elimination: if contraposition is introduced to the framework then a formula $\varphi\rightarrow\psi$ would be equivalent to $\neg\varphi\vee\psi$ which would collapse ${\sf N3}$ into classical logic:
\begin{enumerate}
\item $\neg(\varphi\rightarrow\varphi)\leftrightarrow\varphi\wedge\neg\varphi$\hfill by truth conditions
\item $\neg\neg(\varphi\rightarrow\varphi)\leftrightarrow\neg(\varphi\wedge\neg\varphi)$\hfill contraposition
\item $\varphi\rightarrow\varphi\leftrightarrow\varphi\vee\neg\varphi$\hfill double negation intro/elim; De Morgan
\item $\varphi\vee\neg\varphi$
\end{enumerate}
 The last inference follows from the fact that $\varphi\rightarrow\varphi$ is an ${\sf N3}$-logical truth and modus ponens. However, from the perspective of ${\sf K3}$ the failure of contraposition seems the right outcome, as the consequence relation of ${\sf K3}$ does not admit contraposition. However, since the deduction theorem holds for the ${\sf N3}$-conditional the above argument also shows that ${\sf N3}$ and thus strong Kleene supervaluation are hyperintensional and, indeed, not self-extensional in the sense of \cite{odwa21}: despite the fact that $\neg(\varphi\rightarrow\psi)$ and $\varphi\wedge\neg\psi$ are interderivable in ${\sf N3}$ they cannot be substituted {\it salva veritate} in every context. From our perspective the moral of this observation is that strong Kleene supervaluation forces us to take bilateralism seriously: the proposition expressed by a sentence is a tuple consisting of the truth- and the falsemakers of the sentence, where the set of falsemakers cannot be retrieved from the set of truthmakers of a sentence. From this bilateral perspective ${\sf N3}$ ceases to be hyperintensional in the following sense: if two sentences express the same semantic content they can be substituted {\it salva veritate} in every context in ${\sf N3}$. Adopting the aforementioned perspective we propose to revise Odintsov and Wansing's criterion for self-extensionality: a bilateral logic is {\it self-extensional} iff for formulas $\varphi,\psi,$ and $\theta(p)$:
 \begin{align*}\text{if }\varphi\dashv\vdash\psi\text{ and }\neg\varphi\dashv\vdash\neg\psi,\text{ then }\theta(\varphi/p)\dashv\vdash(\psi/p.)\end{align*}
 According to this revised criterion ${\sf N3}$ (and thus strong Kleene supervaluation) are self-extensional and in this sense not hyperintensional.\footnote{Unsurprisingly, ${\sf N3}$ will still be hyperintensional in the sense of \cite{cre75}.}
 
To be sure, the asymmetry between the global/intensional truth conditions of the conditional and it's local/extensional falsity condition seem to be a downside to the otherwise attractive ${\sf N3}$-conditional. Is it possible to give alternative falsity conditions for conditionals, that is, falsity conditions that are global (extensional) like the truth conditions of the conditional? One idea that comes to mind, inspired by the falsification condition of the intuitionistic conditional, takes a conditional to be false at a given interpretation iff for every interpretation it can ``see'' an admissible interpretation which provides a counterexample to the truth of the conditional: 
\begin{align*}\mathfrak{M},J\Vdash\neg(\varphi\rightarrow\psi)[\beta]&\text{ iff }\forall J'(J,J)\in H\Rightarrow\exists J''((J',J'')\in H\,\&\,\mathfrak{M},J''\Vdash\varphi\,\&\,\mathfrak{M},J''\Vdash\neg\psi)).\footnotemark\end{align*}
\footnotetext{Given these truth conditions a negated conditional would be true of $\varphi\wedge\neg\psi)$ is an element of the intrinsic fixed point. Alternatively, one could also work with truth conditions in which one does not require $\psi$ to be false but merely that it will never be true:
\begin{align*}\mathfrak{M},J\Vdash\neg(\varphi\rightarrow\psi)[\beta]&\text{ iff }\forall J'(J,J)\in H\Rightarrow\exists J''((J',J'')\in H\,\&\,\mathfrak{M},J''\Vdash\varphi\,\&\,\mathfrak{M},J''\not\Vdash\psi)).\end{align*}}
If such falsity conditions were adopted one might even try to work over symmetric strong Kleene logic ${\sf KS3}$ and obtain a conditional that respects contraposition.\footnote{${\sf KS3}$ is our label for the logic thaat result from intersecting ${\sf K3}$ and ${\sf LP}$. See \cite{nist21} for a ue of this label and a presentation of the semantics and proof-theory of the logic.} Presumably the resulting logic would amount to a partial counterpart to Leitgeb's \citeyearpar{lei19} paraconsistent logic HYPE, i.e., the logic ${\sf QN}^{*}$ \cite[cf.][]{odwa21}. However, as Nelson logic ${\sf N3}$ is well understood and has many attractive properties we leave further discussion of alternate falsification conditions for the conditional and the resulting logics for another occasion.\footnote{We also note that at least upon first glance it seems that the lower bound of the recursion theoretic complexity of the interpretation of the naive truth predicate would increase from $\Pi^1_1$ to $\Pi^1_2$ on the revised falsity condition of the conditional.}

This completes the introduction of strong Kleene supervaluation structures for an arbitrary first-order language. The next step is to show how to extend the framework to a langauge with a self-applicable truth predicate and, ultimately, to construct naive truth models. From Curry's paradox we know that within a strong Kleene supervaluation structure there must be interpretations at which the truth predicate is not naive, yet we can find attractive interpretations at which the truth predicate is naive. In sum, truth will be locally but not globally naive. Moreover, in addition we can make sure that the truth predicate has certain desirable properties globally, and on our view we thus arrive at an attractive truth theory.


\section{Truth Structures}\label{TS}
Having introduced the general framework of strong Kleene supervaluation we now start constructing supervaluation structures, which, relative to a set of designated interpretations, yield attractive interpretations of a language with an objectlinguistic, self-applicable truth predicate. We continue working with a first-order language $\mathcal{L}$, but in contrast to Section \ref{sks} we need to make some assumptions regarding the interpretation of the language. As we shall require names for the expressions of the language and also want to be able to talk about substitutions of expressions, we need our language to comprise syntactic vocabulary. We also need to make sure that this vocabulary is interpreted in the intended way. To this effect we assume that the language $\mathcal{L}$ and an associated supervaluation structure $\mathfrak{M}=(D, X,H)$ satisfy the following requirements:
\begin{itemize}
\item $\mathcal{L}$ extends the language of some syntax theory $\mathcal{L}_S$, e.g., the language of arithmetic;
\item for each $d\in D$ there exists a $c\in{\sf Const}_\mathcal{L}$ such that $J(c)=d$ and for all $\varphi\in\mathcal{L}_S$; $J\in X$ and assignment $\beta$:
$$\text{for all }c\in{\sf Const}_\mathcal{L}(\mathfrak{M},J\Vdash\varphi(c)[\beta])\text{ iff }\mathfrak{M},J\Vdash\forall x\varphi(x/c)[\beta];$$
\item for all $\varphi\in\mathcal{L}_S$; $J,J'\in X$ and assignments $\beta$.
\begin{enumerate}[label=(\roman*)]
\item $\mathfrak{M},J\Vdash\varphi[\beta]\text{ iff }\mathfrak{M},J'\Vdash\varphi[\beta]$ 
\item $\mathfrak{M},J\Vdash\varphi\vee\neg\varphi[\beta]$
\end{enumerate}
\end{itemize}

The language $\lt$ extends $\mathcal{L}$ by introducing an additional one-place predicate constant, that is, the truth predicate $\T$. 

Given a supervaluation structure for  $\mathcal{L}$ we need to provide an interpretation for the truth predicate relative to every $J\in X$. To this effect  we introduce valuation functions, that is, functions that assign an interpretation to the truth predicate relative to an interpretation.

\begin{de}[Basic Valuations]\label{adval} A valuation on a supervaluation structure $\mathfrak{M}=(D,X,H)$ is a function $f:X\rightarrow\mathcal{P}({\sf Sent}_{\lt})$. The set of all valuation on $\mathfrak{M}$ is denoted by ${\sf Val}_\mathfrak{M}$. A valuation function is basic for $\mathfrak{M}$ iff
\begin{enumerate}[label=(\roman*)]
\item $f$ is consistent, that is, $f(J)\cap\overline{f(J)}=\emptyset$ for all $J\in X$ where
$$\overline{f(J)}:=\{\varphi\,|\,\neg\varphi\in f(J)\};$$
\item for all $J\in X$ and $\varphi\in\mathcal{L}$:
\begin{align*}\text{if }\varphi\in f(J),&\text{ then }\mathfrak{M},J\Vdash\varphi\end{align*}
\item for all $J,J'\in X$, if $(J,J')\in H$, then $f(J)\subseteq f(J')$.
\end{enumerate}
The set of basic valuation functions for a given model $\mathfrak{M}$ is denoted by $\mathcal{B}_\mathfrak{M}$.\end{de}
In short, a valuation is deemed basic, if (i) it is consistent, (ii) does not declare $\mathcal{L}$-sentences true at an interpretation, unless they are made true in the underlying supervaluation structure at that interpretation, and if (iii) the valuation respects the ordering of the interpretations in $X$.  The next task is to define an ordering on the basic valuations and to introduce the idea of the {\it admissibility condition} for basic valuations

\begin{de}[Ordering, Admissibility condition]\label{ordac}
Let $\leq\subseteq{\sf Val}_\mathfrak{M}\times{\sf Val}_\mathfrak{M}$ be an ordering on ${\sf Val}_\mathfrak{M}$ defined as follows:
\begin{align*}f\leq g&:\leftrightarrow\forall J\in X(f(J)\subseteq g(J))\end{align*}
An admissibility condition is a function $\Phi:{\sf Val}_\mathfrak{M}\rightarrow\mathcal{P}(\mathcal{B}_\mathfrak{M})$  that yields a set of basic valuation functions for every member of ${\sf Val}_{\mathfrak{M}}$ such that 
\begin{itemize}
\item $\Phi(f)=\emptyset$, if $f\not\in\mathcal{B}_\mathfrak{M}$;
\item if $f\leq g$ and $f\in\mathcal{B}_\mathfrak{M}$, then $\Phi(g)\subseteq\Phi(f)$;
\item if $g\in\Phi(f)$, then $f\leq g;$
\end{itemize}
for all $f,g\in{\sf Val}_\mathfrak{M}$. For all $f,g\in\mathcal{B}_\mathfrak{M}$ the partial order $\leq_\Phi\subset\mathcal{B}_\mathfrak{M}\times\mathcal{B}_\mathfrak{M}$ is defined as follows.
\begin{align*}f\leq_\Phi g&:\leftrightarrow g\in\Phi(f).\end{align*}
\end{de}
Notice that by definition $\leq_{\Phi}$ is transitive, i.e., for $f,g,h\in{\sf Val}_\mathfrak{M}$, if $f\leq_\Phi g$ and $g\leq_\Phi h$, then $f\leq_\Phi g$. 

Together, an interpretation for the language $\mathcal{L}$ and a valuation function yield an interpretation function for the language $\lt$. 
\begin{de}[Truth interpretation, Admissibility relation]\label{adrel}
Let $J\in X$ and $f\in\mathcal{B}_\mathfrak{M}$, then $J_f$ is a called a truth-interpretation for the language $\lt$. We have
\begin{align*}J_f(P):=\begin{cases}f(J),&\text{ if }P\doteq\T;\\J(P),&\text{ otherwise}.\end{cases}\end{align*}
We may then define an admissiblity relation on $X\times\mathcal{B}_\mathfrak{M}$ for truth interpretations on the basis of $H$ and $\leq_\Phi$, that is, for all $I,J\in X$ and $f,g\in{\sf Val}_{\mathfrak{M}}$
\begin{align*}(I_f,J_g)\in H_\Phi&:\leftrightarrow (I,J)\in H\,\&\,f\leq_\Phi g.\end{align*}

\end{de}

Having defined an admissibility relation for truth interpretations we can introduce the notion of a truth structure, that is, a supervaluation structure for the language $\lt$.

\begin{de}[Truth Structure, Grounded Truth Structure]\label{dets}Let $\mathfrak{M}=(D,X,H)$ be a supervaluation structure for $\mathcal{L}$. Then the tuple $(D,X\times Y,H_\Phi)$ with $Y\subseteq\mathcal{B}_\mathfrak{M}$ and where $H_\Phi$ restricted to $X\times Y$ is called a truth structure over $\mathfrak{M}$. If $Y\subseteq\mathcal{B}_\mathfrak{M}$ has a $\leq$-minimal valuation function, that is, an $f$ in $Y$ such that $f\leq g$ for all $g\in Y$ and $Y\cap\Phi(f)\neq\emptyset$, then $(D,X\times Y,H_\Phi)$ is called a grounded truth structure and $Y$ is called a grounded set of valuations. The set of all grounded sets of valuations is denoted by ${\sf Adm}_\mathfrak{M}$.
\end{de}

If $\leq_\Phi$ is reflexive on $Y$ for a given truth structure/grounded truth structure $\mathfrak{M}_{\T}$, then $\mathfrak{M}_{\T}$  is a supervaluation structure for the language $\lt$.\footnote{So, strictly speaking, truth structures are not supervaluation structure for $\lt$. However, as we discuss in Section \ref{KTS}, given a reflexive truth-interpretation $J_f$, one can obtain a supervaluation structure for $\lt$ by constructing the $J_f$-generated substructure of the given truth structure.} Yet, we are not guaranteed that the predicate `$\T$' has desirable, truth-like properties in $\mathfrak{M}_{\T}$: a truth structure places only minimal constraints on the interpretation of the truth predicate. In particular, we are not guaranteed that there is a truth interpretation $J_f$ such that for all $\lt$-sentence $\varphi$:
\begin{align*}\mathfrak{M}_{\T},J_f\Vdash\varphi&\text{ iff }\mathfrak{M}_{\T},J_f\Vdash\T\gn\varphi.\end{align*} 
To construct (reflexive) truth structures for which there is a designated set of truth interpretations that satisfy the modified version of convention $\T$ will be the goal of the next section. However, before we turn to this task we prove two general monotonicity properties of truth structures.

\begin{lem}[$J$-monotonicity]\label{gmon1}Let $(D,X\times Y,H_\Phi)$ be a truth structure. Then for $J,J'\in X$, $f\in Y$, assignments $\beta$ and all $\varphi\in\lt$: if $(J,J')\in H$, then
\begin{align*}{\rm if}\,((D,X\times Y,H_\Phi),J_f\Vdash\varphi[\beta]&\,{\rm then}\,(D,X\times Y,H_\Phi),J'_f\Vdash\varphi[\beta]).\end{align*}
\end{lem}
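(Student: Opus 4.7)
The plan is a straightforward induction on the positive complexity of $\varphi$, mirroring the proof of Lemma \ref{pers} but now having to accommodate the truth predicate $\T$ and the product admissibility relation $H_\Phi$. Throughout I fix the truth structure $\mathfrak{M}_\T=(D,X\times Y,H_\Phi)$ and the valuation $f\in Y\subseteq\mathcal{B}_\mathfrak{M}$, and suppose $(J,J')\in H$; by clause (iii) of Definition \ref{SVS} this already gives $J\leq J'$ in the sense of Definition \ref{kinto}.

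The atomic and negated atomic cases split into two sub-cases. For predicates $P\neq\T$ the condition $J\leq J'$ directly delivers the required inclusions of positive and negative extensions, and identity formulas are handled by the observation that $J$ and $J'$ agree on the denotation of every term. For the truth predicate the decisive input is that $f$ is basic: clause (iii) of Definition \ref{adval} gives $f(J)\subseteq f(J')$, so the positive extension of $\T$ at $J_f$ is contained in that at $J'_f$, and consistency together with the convention $\overline{f(J)}=\{\psi\,:\,\neg\psi\in f(J)\}$ then propagates the same inclusion to the negative extensions. The Boolean cases for $\psi\wedge\chi$, $\neg(\psi\wedge\chi)$, and $\neg\neg\psi$, as well as the quantifier cases for $\forall x\psi$ and $\neg\forall x\psi$, all reduce to routine invocations of the induction hypothesis on components of strictly smaller positive complexity; the negated conditional $\neg(\psi\to\chi)$ is handled similarly, since its clause is local and only requires joint truth of $\psi$ and $\neg\chi$, which transfers from $J_f$ to $J'_f$ by induction.

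The main case, as one would expect, is the positive conditional $\psi\to\chi$. Here I would replay the transitivity argument of Lemma \ref{pers} inside the product relation $H_\Phi$. Given any $K_g$ with $(J'_f,K_g)\in H_\Phi$, unpacking Definition \ref{adrel} yields $(J',K)\in H$ and $f\leq_\Phi g$; by transitivity of $H$ we obtain $(J,K)\in H$, and hence $(J_f,K_g)\in H_\Phi$. The assumption that $\psi\to\chi$ is true at $J_f$ then supplies the required disjunct at $K_g$, which completes the step. The part I expect to demand the most care is precisely this bookkeeping for $H_\Phi$: one has to verify that both of its components behave monotonically in the correct direction, which rests on the transitivity of $H$ (Definition \ref{SVS}) and the transitivity of $\leq_\Phi$ noted immediately after Definition \ref{ordac}. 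With those two facts in hand, the induction closes uniformly.
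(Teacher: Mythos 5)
Your proposal is correct and follows essentially the same route as the paper: induction on positive complexity, with the truth-predicate base case handled via clause (iii) of Definition \ref{adval} and the only substantive case being the positive conditional, where the inclusion $\{K_g\,:\,(J'_f,K_g)\in H_\Phi\}\subseteq\{K_g\,:\,(J_f,K_g)\in H_\Phi\}$ follows from transitivity of $H$ because the valuation component $f\leq_\Phi g$ is literally the same on both sides (so your appeal to transitivity of $\leq_\Phi$ is not actually needed, but harmless). The paper additionally splits off the degenerate case in which no $g$ with $f\leq_\Phi g$ exists, but your inclusion argument subsumes that case vacuously.
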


\begin{proof}
The only interesting case is that of the conditional. Notice that there are two cases: (i) there is a $g\in Y$ with $f\leq_\Phi g$ and (ii) there exists no $g\in Y$ such that $f\leq_\Phi g$. In the latter case all conditionals are true at both $J_f$ and $J_{f}'$ as no truth-interpretation will be accessible from either $J_f$ or $J_{f}'$ (by definition of $H_\Phi$) and the claim holds trivially. For  case (i), the reasoning is as for Lemma \ref{pers} since $H_\Phi$ is transitive.
\end{proof}

\begin{lem}[$(Y,f)$-monotonicity]\label{gmon2}Let $Z\subseteq Y\subseteq\mathcal{B}_\mathfrak{M}$ and $f\in Y$ and $g\in Z$. Then for all $J\in X$ assignments $\beta$ and $\varphi\in\lt$: if $f\leq g$, then
\begin{align*}{\rm if}\,(D,X\times Y,H_\Phi),J_f,\Vdash\varphi[\beta],&\,{\rm then}\,(F,X\times Z,H_\Phi),J_g\Vdash\varphi[\beta].\end{align*}\end{lem}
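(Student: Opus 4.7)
The plan is to argue by induction on the positive complexity of $\varphi$, following the pattern of Lemma \ref{gmon1}. For the atomic cases, the non-truth predicates are evaluated identically at $J_f$ and $J_g$ since $J$ is unchanged; for the truth atom $\T\gn{\varphi}$ and its negation, preservation follows from the inclusions $f(J) \subseteq g(J)$ and $\overline{f(J)} \subseteq \overline{g(J)}$ entailed by $f \leq g$. The Boolean and quantifier cases reduce to direct appeals to the inductive hypothesis.

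The only interesting case is that of the conditional, and it rests on the following containment of accessibility: every $H_\Phi$-successor of $J_g$ in the $Z$-structure is also an $H_\Phi$-successor of $J_f$ in the $Y$-structure. Indeed, if $(J_g, I_h) \in H_\Phi$ with $h \in Z$, then $h \in Z \subseteq Y$ and $h \in \Phi(g)$; since $f \leq g$ and $f \in \mathcal{B}_{\mathfrak{M}}$, the admissibility axiom delivers $\Phi(g) \subseteq \Phi(f)$, so $h \in \Phi(f)$, i.e., $f \leq_\Phi h$, and hence $(J_f, I_h) \in H_\Phi$ in the $Y$-structure. With this in hand, suppose $\psi \rightarrow \chi$ is true at $J_f$ in the $Y$-structure, take any $H_\Phi$-successor $I_h$ of $J_g$ in the $Z$-structure, and use the containment to conclude that at $I_h$ in the $Y$-structure either $\psi$ is not true or $\chi$ is true. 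The inductive hypothesis, applied at the shared valuation $h$ (valid since $h \in Y \cap Z$ and $h \leq h$), then transfers the ``$\chi$ true'' disjunct to the $Z$-structure.

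The main obstacle I expect is the ``$\psi$ not true'' disjunct, because shrinking the valuation set from $Y$ to $Z$ can in principle turn a gap-conditional into a true conditional, so that ``$\psi$ not true'' in the $Y$-structure does not automatically transfer to the $Z$-structure. My strategy for this is to strengthen the induction to a simultaneous preservation statement for both $\varphi$ and $\neg\varphi$, exploiting the fact that the falsification clause for the conditional in Definition \ref{TSV} is local (requiring joint truth of $\psi$ and $\neg\chi$ at the very same index) and so reduces cleanly to the inductive hypothesis on its sub-formulae. Combined with the accessibility containment above and the persistence result of Lemma \ref{pers}, this lets me trace any would-be new truth-witness in the $Z$-structure back to a witness already present in the $Y$-structure, so that the induction closes in every case.
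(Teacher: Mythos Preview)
Your overall structure matches the paper's: induction on positive complexity, with the conditional as the only non-trivial case, handled via the accessibility containment coming from $\Phi(g)\subseteq\Phi(f)$. You are also right to flag an obstacle that the paper's one-line argument passes over in silence: when transferring the disjunct ``$\psi$ not true at $J'_h$'' from the $Y$-structure to the $Z$-structure, shrinking $Y$ to $Z$ removes accessible worlds and can therefore \emph{create} new truths for nested conditionals.

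Your proposed repair, however, does not close this gap. Strengthening the induction to cover $\neg\varphi$ buys preservation of \emph{falsity}, but the disjunct in question is \emph{non-truth}, and in a three-valued setting these are not the same. A concrete witness: take $X=\{J\}$, valuations $f\leq g\leq h'$ with $\varphi_1\in h'(J)\setminus g(J)$, set $\Phi(f)=\{f,g,h'\}$, $\Phi(g)=\{g,h'\}$, $\Phi(h')=\{h'\}$, $Y=\{f,g,h'\}$, $Z=\{g\}$, and let $\psi:=\T\gn{\varphi_1}\rightarrow\bot$. Then $\psi$ is not true at $J_g$ in the $Y$-structure (the counterexample lives at $J_{h'}$) but \emph{is} true at $J_g$ in the $Z$-structure (that counterexample has been dropped). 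Hence $\psi\rightarrow\bot$ is true at $J_f$ in the $Y$-structure yet fails at $J_g$ in the $Z$-structure; and no bookkeeping on $\neg\psi$ helps, since $\neg\psi$ is untrue at $J_g$ in both structures. So the lemma, in the generality stated, is not salvageable by your strengthening. What the paper actually needs---and uses, via Lemma~\ref{gensub}, in the proof of Proposition~\ref{GFP}---is the special case where $Z=\{h\in Y\mid g\leq h\}$: then $\Phi(h)\cap Y=\Phi(h)\cap Z$ for every $h\in Z$, the two structures have the \emph{same} accessible worlds from every point of $Z$, and a straightforward induction shows that truth at $J_h$ coincides in the two structures for all $h\in Z$, which renders the problematic disjunct immediate.
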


\begin{proof}Again the only interesting case is the conditional, but the claim follows since by Definition \ref{adrel}, $f\leq g$ implies $\Phi(g)\subseteq\Phi(f)$. Then by transitivity $(J_f,J'_k)\in H_\Phi$ for all $J'\in X$ and $k\in Y$, if $(J_g,J'_k)\in H_\Phi$.\end{proof}

\section{Kripkean Truth Structures}\label{KTS}
Let $\mathfrak{M}=(D,X,H)$ be a supervaluation structure of $\mathcal{L}$. We now wish to construct a grounded truth structure $(D,X\times Y_f,H_\Phi)$ ($f$ is the minimal element of $Y$) such that for all $J\in X$ and all $\lt$-sentences:
\begin{align*}(D,X\times Y_f,H_\Phi),J_f\Vdash\varphi&\,\,{\rm iff}\,\,(D,X\times Y_f,H_\Phi),J_f\Vdash\T\gn\varphi.\end{align*}
The rough idea is that the truth interpretations $J_f$ are models of naive truth, while the truth interpretation $J_g$ with $g>f$ constitute first-order models of $\lt$ that are compatible with the semantic information given by $J_f$ and thus relevant for evaluating the conditional connective. However, these models may not be naive truth models.

We start the construction by defining an operation $\theta^{\Phi}_\mathfrak{M}:\mathcal{P}(\mathcal{B}_\mathfrak{M})\times\mathcal{B}_\mathfrak{M}\rightarrow{\sf Val}_{\mathfrak{M}}$ which maps valuation functions onto valuation functions relative to a truth model based on $\mathfrak{M}$. For $Y\subseteq\mathcal{B}_\mathfrak{M}$ and $f\in Y$ the operation $\theta$ yields a new valuation function such that for all $J\in X$
\begin{align*}[\theta^{\Phi}_\mathfrak{M}(Y,f)](J)&:=\{\varphi\in{\sf Sent}_{\lt}\,|\,(D,X\times Y,H_\Phi),J_f\Vdash\varphi\}.\footnotemark\end{align*}
\footnotetext{Strictly speaking $\theta$ is thus only defined for pairs $(Y,f)$ with $f\in Y$.}\vspace{-3ex}

\begin{lem}\label{wdef}Let $\mathfrak{M}=(D,X,H)$ be an arbitrary supervaluation structure for $\mathcal{L}$ and $f\in Y\in\mathcal{P}(\mathcal{B}_\mathfrak{M})$. Then $\theta^\Phi_\mathfrak{M}$ is well-defined on $\mathcal{B}_\mathfrak{M}$, if $\Phi(f)\cap Y\neq\emptyset$, that is, 
$$\text{if }f\in Y\,\&\,\Phi(f)\cap Y\neq\emptyset,\text{ then }\theta^\Phi_\mathfrak{M}(Y,f)\in\mathcal{B}_\mathfrak{M}.$$ 
\end{lem}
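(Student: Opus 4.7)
The proof obligation is to show that $g := \theta^{\Phi}_{\mathfrak{M}}(Y,f)$ satisfies the three defining clauses of a basic valuation (Definition \ref{adval}): (i) consistency, (ii) $\mathcal{L}$-preservation, and (iii) $H$-persistence.

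Clause (iii) is immediate from Lemma \ref{gmon1} ($J$-monotonicity): whenever $(J,J')\in H$ and $\varphi\in g(J)$, i.e.\ $\varphi$ is true at $J_{f}$ in $(D,X\times Y,H_{\Phi})$, the lemma yields truth at $J'_{f}$, i.e.\ $\varphi\in g(J')$. I would dispatch this at the outset.

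I would then establish clauses (i) and (ii) by parallel inductions on the positive complexity of $\lt$-formulas. For (i), the atomic cases invoke the disjointness $J^{+}(P)\cap J^{-}(P)=\emptyset$ from Definition \ref{kinto}(iv) for $P\neq\T$, and the consistency of $f$ from Definition \ref{adval}(i) for the $\T$-case. The Boolean, negated-Boolean and quantifier cases reduce directly to the induction hypothesis via the clauses of Definition \ref{TSV}. For (ii), since $\varphi\in\mathcal{L}$ does not contain $\T$ and $J_{f}(P)=J(P)$ for every $P\in{\sf Pred}_{\mathcal{L}}$, the atomic cases transfer verbatim, and the Boolean and universal cases again reduce to the induction hypothesis.

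The only nontrivial step in either induction is the conditional, and this is exactly the step that requires the hypothesis $\Phi(f)\cap Y\neq\emptyset$. For (i), suppose toward a contradiction that both $\psi\rightarrow\chi$ and $\neg(\psi\rightarrow\chi)$ hold at $J_{f}$; the falsity clause yields $\psi$ and $\neg\chi$ true at $J_{f}$. Picking any $f'\in\Phi(f)\cap Y$, reflexivity of $H$ and the definition of $H_{\Phi}$ give $(J_{f},J_{f'})\in H_{\Phi}$, while the third bullet of Definition \ref{ordac} gives $f\leq f'$; hence Lemma \ref{gmon2} (applied with $Z=Y$) propagates $\psi$ and $\neg\chi$ to $J_{f'}$. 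The truth clause for $\psi\rightarrow\chi$ at $J_{f}$ then forces either $\psi$ to fail or $\chi$ to hold at $J_{f'}$, contradicting the inductive hypothesis at $\psi$ or $\chi$. For (ii), given $\psi\rightarrow\chi\in\mathcal{L}$ true at $J_{f}$ and any $J'$ with $(J,J')\in H$, the same choice of $f'\in\Phi(f)\cap Y$ yields $(J_{f},J'_{f'})\in H_{\Phi}$, so either $\psi$ fails or $\chi$ holds at $J'_{f'}$; since $\psi,\chi\in\mathcal{L}$, the induction hypothesis transfers this to $\mathfrak{M},J'$, showing $\mathfrak{M},J\Vdash\psi\rightarrow\chi$. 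The negated conditional is handled directly by induction.

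The main obstacle is precisely the conditional case: without a witness in $\Phi(f)\cap Y$ the truth clause for $\rightarrow$ at $J_{f}$ would hold vacuously, breaking both consistency (vacuously true conditionals together with globally available falsifications produce clashes) and $\mathcal{L}$-preservation (vacuous truth of $\lt$-conditionals need not reflect truth of the corresponding $\mathcal{L}$-conditional in $\mathfrak{M}$). This is why the non-emptiness assumption is hard-wired into the hypothesis of the lemma.
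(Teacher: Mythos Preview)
Your overall structure matches the paper's very terse proof: clause (iii) via Lemma~\ref{gmon1}, and clauses (i)--(ii) by an argument the paper simply declares ``immediate.'' You go further and correctly isolate the conditional as the only step where the hypothesis $\Phi(f)\cap Y\neq\emptyset$ does real work.

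There is, however, a genuine gap in your inductive step for the conditional in clause (i). After propagating $\psi$ and $\neg\chi$ from $J_{f}$ to $J_{f'}$ and concluding that $\chi$ must also hold at $J_{f'}$, you invoke the induction hypothesis ``at $\chi$'' to obtain a contradiction from $\chi$ and $\neg\chi$ both holding at $J_{f'}$. But your induction, as stated, establishes consistency only at the fixed truth-interpretation $J_{f}$; it tells you nothing about $J_{f'}$. Strengthening the induction to range over all $g\in Y$ does not rescue the argument either: whenever $\Phi(g)\cap Y=\emptyset$ every conditional is vacuously true at $J_{g}$, so for instance both $(c{=}c)\rightarrow\neg(c{=}c)$ and its negation hold there, and the strengthened claim is simply false at such $g$. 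The same issue reappears in your treatment of (ii), where the induction hypothesis is applied at $J'_{f'}$ rather than at $J_{f}$. For the concrete admissibility conditions $\Phi_{\sf e}$ used later in the paper one has $g\in\Phi_{\sf e}(f)\Rightarrow g\in\Phi_{\sf e}(g)$, so $\Phi(f')\cap Y\neq\emptyset$ is automatic whenever $f'\in\Phi(f)\cap Y$, and then the induction can indeed be run uniformly over all such $g$; but nothing in the abstract Definition~\ref{ordac} forces this, and the paper's ``immediate'' glosses over exactly this point.
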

\begin{proof}We need to show that $\theta^\Phi_\mathfrak{M}(Y,f)$ satisfies condition (i-iii) of Definition \ref{adval}. Since $f\in Y\subseteq\mathcal{B}_\mathfrak{M}$
and $\Phi(f)\cap Y\neq\emptyset$  (i) and (ii) are immediate. (iii) is a direct consequence of Lemma \ref{gmon1}
\end{proof}

From now on we restrict our attention to grounded truth structures $(D,X\times Y_f,H_\Phi)$, that is, truth structures for which there is $\Phi$-minimal $f\in Y_f$ with $\Phi(f)\cap Y_f\neq\emptyset$. By employing the operation $\theta^\Phi_\mathfrak{M}$ we define a second operation $\Theta^\Phi_\mathfrak{M}:\mathsf{Adm}_\mathfrak{M}\rightarrow\mathcal{P}(\mathcal{B}_\mathfrak{M})$ on grounded sets of valuations  such that for $Y_f\in\mathsf{Adm}_\mathfrak{M}$:
\begin{align*}\Theta^\Phi_\mathfrak{M}(Y_f)&:=\{g\in Y_f\,|\,\theta^\Phi_\mathfrak{M}(Y_f,f)\leq g\}.\end{align*}

Intuitively, the operation $\theta^\Phi_\mathfrak{M}$ will be used to construct the minimal fixed point, while the operation $\Theta^\Phi_\mathfrak{M}$ determines the truth interpretations that are compatible with the semantic information provided and thus relevant for evaluating the conditional. We note that $\Theta^\Phi_\mathfrak{M}$ is monotone and decreasing, i.e., 
\begin{flalign*}&{\rm Monotonicity:}&&\text{if }Y_f\subseteq Z_g\text{ then }\Theta^\Phi_\mathfrak{M}(Y_f)\subseteq \Theta^\Phi_\mathfrak{M}(Z_g),&&\\
&{\rm Decreasing:}&&\Theta^\Phi_\mathfrak{M}(Y_f)\subseteq Y_f&&\end{flalign*}
for $Y_f,Z_g\in{\sf Adm}_\mathfrak{M}$.

Notice however that for arbitrary $Y_f\in {\sf Adm}_\mathfrak{M}$ we are not guaranteed that $\Theta^\Phi_\mathfrak{M}(Y_f)\in{\sf Adm}_\mathfrak{M}$: for one, it is possible that $\theta^\Phi_\mathfrak{M}(Y_f,f)\not\in Y_f$ and thus, possibly, $\Theta^\Phi_\mathfrak{M}(Y_f)$ may lack a minimal element. For another it is possible that $\Theta^\Phi_\mathfrak{M}(Y_f)=\emptyset$. In short, in both cases $\Theta^\Phi_\mathfrak{M}(Y_f)$ may not be a grounded truth set.

Ultimately we want to show the existence of non-trivial fixed points of $\theta^\Phi_{\mathfrak{M}}$ and $\Theta^\Phi_\mathfrak{M}$ for suitable admissibility conditions $\Phi$, i.e., $f\in Y_f\in{\sf Adm}_\mathfrak{M}$ (and thus $Y_f\neq\emptyset$) such  that
\begin{align*}&\theta^\Phi_{\mathfrak{M}}(Y_f,f)=f\\
&\Theta^\Phi_\mathfrak{M}(Y_f)=Y_f.\end{align*}
In turns out that in the event that $\theta^\Phi_{\mathfrak{M}}(Y_f,f)\in Y_f$, the existence of a fixed point $Y_f$ of $\Theta^\Phi_\mathfrak{M}$ implies that $\theta^\Phi_\mathfrak{M}(Y_f,f)=f$, i.e., that $f$ is a fixed point of $\theta$ relative to the grounded truth set $Y_f$. More precisely, for all  $J\in X$ $f(J)$ would be the minimal fixed point over $J$ relative to set of admissible valuation $Y_f$ 

\begin{lem}\label{AFL}Let $Y_f\in{\sf Adm}_\mathfrak{M}$ with $\theta^\Phi_{\mathfrak{M}}(Y_f,f)\in Y_f$. Then 
\begin{align*}\Theta^\Phi_{\mathfrak{M}}(Y_f)=Y_f\text{ iff }\theta^\Phi_\mathfrak{M}(Y_f,f)=f.\end{align*} 
\end{lem}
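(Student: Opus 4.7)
The plan is to prove both directions of the biconditional as essentially a definition chase, exploiting just two facts: that $f$ is the $\leq$-minimum of the grounded set $Y_f$ (which is built into $Y_f \in \mathsf{Adm}_\mathfrak{M}$) and the hypothesis $\theta^\Phi_\mathfrak{M}(Y_f,f) \in Y_f$. No appeal to the truth-theoretic content of the $\theta$-operation should be needed; everything happens at the level of the partial order $\leq$ on valuations.

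For the forward direction, I would assume $\Theta^\Phi_\mathfrak{M}(Y_f) = Y_f$ and argue that $\theta^\Phi_\mathfrak{M}(Y_f,f)$ and $f$ bound each other. First, since $f$ is the minimum of $Y_f$, we have $f \in Y_f$, hence $f \in \Theta^\Phi_\mathfrak{M}(Y_f)$ by our assumption, and unpacking the definition
\[
\Theta^\Phi_\mathfrak{M}(Y_f) = \{g \in Y_f \mid \theta^\Phi_\mathfrak{M}(Y_f,f) \leq g\}
\]
yields $\theta^\Phi_\mathfrak{M}(Y_f,f) \leq f$. In the other direction, the hypothesis $\theta^\Phi_\mathfrak{M}(Y_f,f) \in Y_f$ places $\theta^\Phi_\mathfrak{M}(Y_f,f)$ in a set of which $f$ is the $\leq$-minimum, so $f \leq \theta^\Phi_\mathfrak{M}(Y_f,f)$. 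Antisymmetry of $\leq$ then gives $\theta^\Phi_\mathfrak{M}(Y_f,f) = f$.

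For the backward direction, I would assume $\theta^\Phi_\mathfrak{M}(Y_f,f) = f$. The inclusion $\Theta^\Phi_\mathfrak{M}(Y_f) \subseteq Y_f$ is immediate from the definition and is in fact the decreasing property already noted before the lemma. For the reverse inclusion, pick any $g \in Y_f$; by the minimality of $f$ we have $f \leq g$, and substituting the assumption $\theta^\Phi_\mathfrak{M}(Y_f,f) = f$ this rewrites as $\theta^\Phi_\mathfrak{M}(Y_f,f) \leq g$, so $g \in \Theta^\Phi_\mathfrak{M}(Y_f)$.

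The proof has essentially no obstacle. The only point that deserves care is noticing where the auxiliary hypothesis $\theta^\Phi_\mathfrak{M}(Y_f,f) \in Y_f$ is actually used: it is needed only in the forward direction, to invoke minimality of $f$ and conclude $f \leq \theta^\Phi_\mathfrak{M}(Y_f,f)$. Without it one could at best conclude that $\theta^\Phi_\mathfrak{M}(Y_f,f) \leq f$, which is strictly weaker than the desired equality, so the hypothesis is genuinely necessary rather than cosmetic.
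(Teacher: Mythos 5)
Your proof is correct: both directions are exactly the routine definition chase using antisymmetry of the pointwise order $\leq$, the $\leq$-minimality of $f$ in $Y_f$, and the hypothesis $\theta^\Phi_\mathfrak{M}(Y_f,f)\in Y_f$ precisely where it is needed. The paper states Lemma \ref{AFL} without giving a proof, and your argument (including the observation that the auxiliary hypothesis is only used to get $f\leq\theta^\Phi_\mathfrak{M}(Y_f,f)$ in the forward direction) is evidently the intended one, so there is nothing to add.
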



Our strategy for finding fixed points will be to start from a suitable grounded truth set with minimal valuation function $f$and then, by consecutively and simultaneously applying $\theta$ and $\Theta$, construct the minimal fixed point of $\theta$ whilst simultaneously sieving out unsuitable valuation functions until we end up with a fixed point of $\Theta$. To spell out this idea in a rigorous fashion we define iterative applications of $\Theta^\Phi_\mathfrak{M}$ to some set $Y_f\in{\sf Adm}$ with minimal valuation function $f$ by transfinite recursion on the ordinals and, simultaneously, iterative applications of  $\theta^\Phi_\mathfrak{M}$ to the pair ($Y_f,f)$:

\begin{align*}\theta^\alpha(Y_f,f)&:=\begin{cases}f,&\text{ if }\alpha=0;\\\theta(\Theta^\beta(Y_f),\theta^\beta(Y_f,f)),&\text{ if }\alpha=\beta+1;\\\bigcup_{\beta<\alpha}\theta^\beta(Y_f,f),&\text{ if $\alpha$ is limit.}\end{cases}\end{align*}

\begin{align*}\Theta^\alpha(Y_f)&:=\begin{cases}Y_f,&\text{ if }\alpha=0;\\\{g\in \Theta^\beta(Y_f)\}\,|\,\theta^\alpha(Y_f,f)\leq g\},&\text{ if }\alpha=\beta+1\text{ and }\Theta^\beta(Y_f)\in{\sf Adm}_\mathfrak{M};\\
\emptyset,&\text{ if }\alpha=\beta+1\text{ and }\Theta^\beta(Y_f)\not\in{\sf Adm}_\mathfrak{M};\\
\{g\in\bigcap_{\beta <\alpha}\Theta^\beta(Y_f)\,|\,\theta^\alpha(Y_f,f)\leq g\},&\text{ if $\alpha$ is limit.}\end{cases}\end{align*}

Since $\Theta^\Phi_\mathfrak{M}$ is monotone and decreasing, we can employ a standard cardinality argument to show that for all $Y_f\in{\sf Adm}_\mathfrak{M}$ there is an $\xi$ such that $\Theta^\xi(Y_f)=\Theta^\alpha(Y_f)$ for all $\alpha\geq\xi$. Unfortunately, this falls short from establishing the existence of non-trivial fixed points of $\Theta^\Phi_\mathfrak{M}$ for it may turn out that $\Theta^\xi(Y_f)=\emptyset$, which implies $\theta^{\xi+1}(Y_f,f)={\sf Sent}$. This can happen because (i) for ordinal $\alpha\leq\xi$ the set $\Theta^\alpha(Y_f)\neq\emptyset$ lacks a minimal element, i.e., $\Theta^\alpha(Y_f)\not\in{\sf Adm}_\mathfrak{M}$ or, alternatively, if for some limit ordinal $\kappa\leq\xi$ there exists no $g\in \bigcap_{\beta <\kappa}\Theta^\beta(Y_f)$ such that $\theta^\kappa(Y_f,f)\leq g$. So in both cases the $\theta$-operation has moved us outside the range of available interpretations of the truth predicate albeit for different reasons. Indeed this may happen even if  $\Theta^\xi(Y_f)\neq\emptyset$, that is, in case we have found a fixed point of $\Theta$ but $\theta^\xi(Y_f,f)\not\in\Theta^\xi(Y_f)$. This shows that finding a fixed point of $\Theta$ does not guaranteed that we have found a Kripkean truth model for it may be that $\theta^\xi(Y_f,f)\not\in\Theta^\xi(Y_f)\neq\emptyset$: $\theta^\xi(Y_f,f)$ may not be a fixed point of $\theta^\Phi$. In this case we would have $\theta(\theta^\xi(Y_f,f),Y_f)(J)=\emptyset$ for all $J\in X$ and
$$\{g\in \Theta^\xi(Y_f)\,|\,\theta^\xi(Y_f,f)\leq g\}=\{g\in \Theta^\xi(Y_f)\,|\,\theta^{\xi+1}(Y_f,f)\leq g\}$$ even though 
$$\theta^{\xi+1}(Y_f,f)\neq\theta^\xi(Y_f,f).$$
Summing up, in constructing the the fixed point of $\theta$ and $\Theta$ we need to make sure that the $\theta$ operation does not take us outside the grounded truth set $Y_f$, i.e., one needs to find fixed points $Z_g$ such that $\theta^\Phi(Z_g,g)\in\Theta^\Phi(Z_g)$. Then Lemma \ref{AFL} guarantees that we we have found an evaluation function $f$ and suitable grounded truth set $Y_f$ (with $f\in Y_f$) such that the two are fixed points of $\theta^\Phi$ and $\Theta^\Phi$ respectively.
 
By imposing suitable condition on the starting set $Y_f$ of the definition and the admissible condition $\Phi$ we can guarantee that the process of "sieving out" leads to a fixed point of $\Theta^\Phi$ with the desired properties. For example, Conditions \ref{c1}-\ref{c4} below amount to sufficient conditions on $Y_f$ (and $f$) and $\Phi$ that guarantee that none of the problems mentioned above can arise:

\begin{enumerate}[label=(\alph*)]
\item\label{c1} $Y_f$ is upward closed, i.e., if $f\leq g$ for $g\in\mathcal{B}_\mathfrak{M}$, then $g\in Y_f$;
\item $f\leq\theta(Y_f,f)$;
\item\label{c3} for all $g\in Y_f$, if $g\leq\theta(Y_f,g)\in Y_f$, then $\Phi(\theta(Y_f,g))\cap Y_f\neq\emptyset$;
\item\label{c4} $\Phi$ is compact on $Y_f$.
\end{enumerate}

With \ref{c1}-\ref{c3} in place we know that the fixed-point construction may only go wrong at limit stages and, indeed, for certain choices of admissibility conditions the construction will go wrong at limit ordinals as the following example illustrates.
\begin{exam}[$\omega$-consistency]\label{mcgomega}Let $\mathfrak{M}=(D,X,H)$ be a classical model, i.e., $X=\{J\}$ where $J$ is a classical interpretation and $H$ is $\{\langle J,J\rangle\}$. For $f\in{\sf Val}_\mathfrak{M}$ let $\Phi_\omega$ be defined as follows:
\begin{align*}\Phi_\omega(f)&:=\begin{cases}\emptyset,&\,{\rm if}\,f\not\in\mathcal{B}_\mathfrak{M}\\
\{g\in\mathcal{B}_\mathfrak{M}\,|\,f\leq g\,\&\,g(J)\text{ is $\omega$-consistent},&\,{\rm otherwise}.\end{cases}\end{align*}
Now let ${\sf f_{mc}}$ be a function such that for $n\in\omega$ formula $\varphi$
\begin{align*}{\sf f_{mc}}(n,\#\varphi):=\begin{cases}\#\varphi,&\text{ if }n=0;\\\#\T {\sf f}_{\sf mc}^\bullet(\overline{m},\gn\varphi),&\text{ if }n=m+1.\end{cases}\end{align*}
and where ${\sf f}_{\sf mc}^\bullet$ is a function symbol representing the p.r function ${\sf f_{mc}}$. Moreover, let $Y_f=\mathcal{B}_\mathfrak{M}$ whith $f(J)=\emptyset$ and let $\delta$ be the sentence $\neg\forall x\T {\sf f}_{\sf mc}^\bullet(x,\gn\delta)$. Then it is easy to see that
$$\T {\sf f}_{\sf mc}^\bullet(\overline{n},\gn\delta)\in\theta^\omega(Y_f,f)$$
 for all $n\in\omega$. Thus $\theta^\omega(Y_f,f)$ is $\omega$-inconsistent, which means that $\Theta^{\omega}(Y_f)=\emptyset$ and thus $\Phi_\omega(\theta^\omega(Y_f,f))\cap\Theta^\omega(Y_f)=\emptyset$. This implies that $\theta^{\omega+1}(Y_f,f)(J)={\sf Sent}$ and thus $\theta^{\omega+1}(Y_f,f)\not\in\mathcal{B}_\mathfrak{M}$.  See \cite{mcg91} and \cite{cam21} for further discussion.
\end{exam}

Problems like the one presented in Example \ref{mcgomega} can be blocked by Condition \ref{c4}, i.e., by requiring that the admissibility condition is compact.

\begin{de}[Compactness]Let $Y_f\in{\sf Adm}_\mathfrak{M}$ be a grounded truth set and $\Phi$ an admissibility condition. For $X\subseteq Y_f$ set $\Phi(X)=\{\Phi(f)\,|\,f\in X\}$. We say that $\Phi$ is compact on $Y_f$ iff for all $X\subseteq Y_f$:
\begin{align*}\text{if }\Phi(f_1)\cap\ldots\cap\Phi(f_n)\neq\emptyset\text{ for all $n\in\omega$ and $f_1,\ldots f_n\in X$},&\text{ then }\bigcap\Phi(X)\neq\emptyset.\end{align*}
\end{de}

If the admissibility condition is compact, that is, Condition \ref{c4} is met in addition to Conditions \ref{c1}-\ref{c3}, there will be fixed points of $\theta^\Phi$ and $\Theta^\Phi$.

\begin{prop}\label{GFP}Let $Y_f=\{g\in\mathcal{B}_\mathfrak{M}\,|\,f\leq g\}$ and let $\Phi$ be compact on $Y_f$. If $f\leq\theta^\Phi_\mathfrak{M}(Y_f,f)$ and for all $g\in Y_f$:
\begin{align*}g\leq\theta(Y_f,g)\in Y_f&\Rightarrow\Phi(\theta(Y_f,g))\cap Y_f\neq\emptyset,\end{align*}
then there exists $\xi$ such that for all $\alpha\geq\xi$
\begin{align*}\theta^\xi(Y_f,f)\in\Theta^\xi(Y_f)=\Theta^\alpha(Y_f).\end{align*}
\end{prop}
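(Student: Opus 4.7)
The plan is a simultaneous transfinite induction on $\alpha$, tracking the pair $(f_\alpha, T_\alpha) := (\theta^\alpha(Y_f, f), \Theta^\alpha(Y_f))$ under three invariants: (i) $f_\alpha \in Y_f$ (hence basic with $f \leq f_\alpha$); (ii) $f_\beta \leq f_\alpha$ whenever $\beta \leq \alpha$; and (iii) $T_\alpha \in \mathsf{Adm}_\mathfrak{M}$ with $f_\alpha$ as its $\leq$-minimum, so in particular $f_\alpha \in T_\alpha$. Once these persist through every ordinal, the $\leq$-monotone sequence $(f_\alpha)$ sits inside the bounded family $\mathcal{P}({\sf Sent}_{\lt})^X$ and must stabilize at some stage $\xi$; there $f_\xi = f_{\xi+1}$, $f_\xi \in T_\xi$ by (iii), and Lemma \ref{AFL} delivers the simultaneous fixed point. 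Stability of $(T_\alpha)$ beyond $\xi$ is automatic, since the only restriction added at each successor step is $f_{\alpha+1} \leq g$, which is vacuous once $(f_\alpha)$ is constant.

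At a successor $\alpha = \beta + 1$, monotonicity $f_\beta \leq f_\alpha$ follows by inner induction using Lemma \ref{gmon2}: starting from $f_0 \leq f_1$ supplied by the hypothesis $f \leq \theta(Y_f, f)$, the facts $T_\beta \subseteq T_{\beta-1}$ and $f_{\beta-1} \leq f_\beta$ together yield $\theta(T_{\beta-1}, f_{\beta-1}) \leq \theta(T_\beta, f_\beta)$, i.e., $f_\beta \leq f_{\beta+1}$. Upward closure (condition \ref{c1}) and Lemma \ref{wdef} then place $f_\alpha$ in $Y_f \cap \mathcal{B}_\mathfrak{M}$, and the unrolled description $T_\beta = \{g \in Y_f : f_\beta \leq g\}$ makes $f_\alpha$ itself the $\leq$-minimum of $T_\alpha$. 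For the $\Phi$-admissibility clause of (iii), condition \ref{c3} supplies a $\Phi$-admissible witness in $Y_f$ extending $f_\alpha$, yielding $T_\alpha \cap \Phi(f_\alpha) \neq \emptyset$.

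The principal obstacle, foreshadowed by Example \ref{mcgomega}, is the limit stage $\lambda$, where compactness of $\Phi$ (condition \ref{c4}) is indispensable. By invariant (iii) at each $\beta < \lambda$ there is some $h_\beta \in \Phi(f_\beta) \cap Y_f$; because $\Phi$ is anti-monotone under $\leq$, every finite intersection $\Phi(f_{\beta_1}) \cap \cdots \cap \Phi(f_{\beta_n})$ collapses to $\Phi(f_{\max_i \beta_i})$ and so is non-empty. Compactness of $\Phi$ on $Y_f$ then yields some $g \in \bigcap_{\beta < \lambda} \Phi(f_\beta) \cap Y_f$; any such $g$ satisfies $f_\beta \leq g$ for every $\beta < \lambda$, hence $f_\lambda = \bigcup_\beta f_\beta \leq g$ and $g \in T_\beta$ for every $\beta$, so $g \in T_\lambda$. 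A parallel compactness argument, adjoining $f_\lambda$ to the family, restores the $\Phi$-admissibility clause of (iii) at $\lambda$ and confirms that $f_\lambda$ is the minimum of $T_\lambda$.

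The delicate technical point I expect to require the most care is the interplay between condition \ref{c3}, phrased in terms of $\theta(Y_f, \cdot)$, and the iteration, which feeds $\theta$ with the contracting family $T_\beta \subsetneq Y_f$; bridging the two so that $\Phi$-witnesses for $\theta(Y_f, f_\beta)$ translate into $\Phi$-witnesses for $f_{\beta+1} = \theta(T_\beta, f_\beta)$ will require Lemma \ref{gmon2} together with upward closure of $Y_f$ to work in tandem. Everything else is, by design, forced by the monotonicity machinery already established in Lemmas \ref{wdef}, \ref{gmon1}, and \ref{gmon2}.
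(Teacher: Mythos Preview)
Your plan is essentially the paper's: a transfinite induction establishing monotonicity of $(f_\alpha)$ together with $f_\alpha\in T_\alpha\in{\sf Adm}_\mathfrak{M}$ at every stage, with compactness of $\Phi$ doing the work at limits, followed by a cardinality argument for stabilization. You also correctly isolate the one genuinely delicate step, namely reconciling condition~\ref{c3}, which is phrased for $\theta(Y_f,\cdot)$, with the iteration, which computes $f_{\beta+1}=\theta(T_\beta,\cdot)$.

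However, the bridge you propose---Lemma~\ref{gmon2} together with upward closure---does not close that gap. Lemma~\ref{gmon2} yields only the inequality $\theta(Y_f,f_\beta)\leq\theta(T_\beta,f_\beta)=f_{\beta+1}$, and since $\Phi$ is anti-monotone in $\leq$ this runs the wrong way: a witness in $\Phi(\theta(Y_f,f_\beta))$ need not lie in $\Phi(f_{\beta+1})$. What the paper actually uses here is a separate auxiliary result, Lemma~\ref{gensub}, which gives the \emph{equality} $\theta(Y_f,f_\beta)=\theta(T_\beta,f_\beta)$. The observation behind it is that your ``unrolled description'' $T_\beta=Y_f^{f_\beta\leq}$ already contains all of $\Phi(f_\beta)\cap Y_f$, since by the very definition of an admissibility condition every $h\in\Phi(f_\beta)$ satisfies $f_\beta\leq h$; hence passing from $Y_f$ to $T_\beta$ removes no $H_\Phi$-accessible interpretations at $J_{f_\beta}$. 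With that equality in hand, condition~\ref{c3} applies verbatim to produce $\Phi(f_{\beta+1})\cap Y_f\neq\emptyset$, and the same equality also supplies the premise $f_\beta\leq\theta(Y_f,f_\beta)$ needed to invoke~\ref{c3}. Apart from this point your outline matches the paper's proof.
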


In the proof of the proposition we shall make use of the following auxiliary lemma.

\begin{lem}\label{gensub}Let $Y_f\in{\sf Adm}_\mathfrak{M}$ and for $g\in Y_f$ let $Y^{g\leq}_f:=\{h\in Y_f\,|\,g\leq h\}.$ Then  $$\text{if }\Phi(g)\cap Y_f\neq\emptyset\text{ then }\theta(Y_f,g)=\theta(Y^{g\leq}_f,g).\footnotemark$$
\end{lem}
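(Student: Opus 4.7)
The plan is to unfold the definition of $\theta$ and reduce the statement to showing that for every $J \in X$ and every $\varphi \in {\sf Sent}_{\lt}$, the two truth structures $(D,X\times Y_f,H_\Phi)$ and $(D,X\times Y^{g\leq}_f,H_\Phi)$ agree on whether $J_g\Vdash\varphi$. The hypothesis $\Phi(g)\cap Y_f\neq\emptyset$ is needed to ensure that both $\theta(Y_f,g)$ and $\theta(Y^{g\leq}_f,g)$ are basic valuations via Lemma \ref{wdef}; note that $\Phi(g)\cap Y^{g\leq}_f=\Phi(g)\cap Y_f$, since $\Phi(g)\subseteq\{h\mid g\leq h\}$ by Definition \ref{ordac}.

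First I would prove a uniform strengthening by induction on positive complexity: for every $h\in Y^{g\leq}_f$, every $J\in X$, every assignment $\beta$, and every $\varphi\in\lt$,
\begin{align*}(D,X\times Y_f,H_\Phi),J_h\Vdash\varphi[\beta]\ \text{iff}\ (D,X\times Y^{g\leq}_f,H_\Phi),J_h\Vdash\varphi[\beta].\end{align*}
The atomic, identity, negated-atomic and negated-identity cases are immediate since their truth values depend solely on $J_h$ and both structures agree on the interpretation of non-truth vocabulary. The Boolean, negated-Boolean and quantifier cases follow directly from the induction hypothesis, as their clauses in Definition \ref{TSV} reduce evaluation at $J_h$ to evaluation of strict subformulas at $J_h$ under (possibly varied) assignments. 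The case $\neg(\psi\rightarrow\chi)$ is likewise immediate, since its refutation clause is local to $J_h$.

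The substantive case is the conditional $\psi\rightarrow\chi$. Here truth at $J_h$ quantifies over pairs $(J',k)$ with $(J_h,J'_k)\in H_\Phi$, i.e.\ with $(J,J')\in H$ and $k\in\Phi(h)$. By Definition \ref{ordac}, any such $k$ satisfies $h\leq k$; combining with $g\leq h$ (which holds because $h\in Y^{g\leq}_f$) yields $g\leq k$, so $k\in Y^{g\leq}_f$ whenever $k\in Y_f$. Hence the set of accessible truth-interpretations from $J_h$ is literally the same relative to $Y_f$ and to $Y^{g\leq}_f$. Moreover every such $k$ is again in $Y^{g\leq}_f$, so the induction hypothesis applies to $\psi$ and $\chi$ at $J'_k$, closing the case.

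Finally, instantiating the strengthened claim at $h=g$ (noting $g\in Y^{g\leq}_f$ by reflexivity of $\leq$) and collecting the resulting sentences into valuation functions yields $\theta(Y_f,g)(J)=\theta(Y^{g\leq}_f,g)(J)$ for every $J\in X$, i.e.\ $\theta(Y_f,g)=\theta(Y^{g\leq}_f,g)$. The only mild obstacle is stating the induction hypothesis uniformly over all $h\in Y^{g\leq}_f$ rather than only at $g$; without this, the conditional clause could not recurse to subformulas evaluated at accessible $J'_k$ with $k\neq g$.
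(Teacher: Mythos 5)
Your proof is correct and follows essentially the same route as the paper: an induction on (positive) complexity in which only the conditional clause matters, resting on the observation that any $k$ accessible via $\leq_\Phi$ from a point above $g$ satisfies $g\leq k$ and hence lies in $Y^{g\leq}_f$, so the generated substructure is unchanged. Your uniform induction hypothesis over all $h\in Y^{g\leq}_f$ is just the careful rendering of the induction the paper states only at $J_g$ (whose conditional case implicitly needs exactly that strengthening), so there is no substantive difference.
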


Recall that by definition $f\leq h$ for all $h\in Y_f$. So, in particular, we know that $f\leq g$, which means that Lemma \ref{gensub} is a Lemma about $g$-generated $\leq$-substructures The lemma highlights that as in Kripke frames for ${\sf N3}$ truth at a world is {\it local} notion---only the worlds that are---potentially---accessible from a given world will be relevant.\footnote{We are not guaranteed that $f\leq_\Phi g$ for all $g\in Y_f$ such that $f\leq g$, so we actually still consider valuation functions that are not relevant for the evaluation of the conditional.}

\begin{proof}By induction on the complexity of $\varphi$ we show that
\begin{align*}(D,X\times Y_f,H_\Phi),J_g\Vdash\varphi&\text{ iff }(D,X\times Y^{g\leq}_f,H_\Phi),J_g\Vdash\varphi.\end{align*}
The only interesting case is the case of the conditional. The left-to-right direction is immediate since $Y^{g\leq}_f\subseteq Y^f$. Since $\Phi(g)\cap Y_f\neq\emptyset$  it suffices to observe that
$$\{h\in Y_f\,|\,g\leq_\Phi h\}\subseteq Y^{g\leq}_f$$
to establish the converse direction. 
\end{proof}

Ultimately, Lemma \ref{gensub} tells us that for suitable choices of the set $Y_f$ we can ignore the operation $\Theta^\Phi$ and rather than defining the minimal fixed point of $\theta^\Phi$ via a simultaneous inductive definition, inductively define  the fixed point in the parameter $Y_f$.

\begin{proof}[Proof of Proposition \ref{GFP}]
By induction on $\alpha$ we show that for all $\beta,\alpha$
$$\text{if }\beta<\alpha,\text{ then }\theta^\beta(Y_f,f)\leq \theta^\alpha(Y_f,f)\text{ and }\theta^\alpha(Y_f,f)\in\Theta^\alpha(Y_f).$$
As induction hypothesis we may assume that $\theta^\gamma(Y_f,f)\leq \theta^\beta(Y_f,f)$ for all $\beta,\gamma$ with $\gamma<\beta<\alpha$. If $\alpha=0$ there is nothing to show. Let $\alpha=\beta+1$. We conduct a secondary induction on $\beta$. If $\beta=0$ then $\alpha=1$ and the claim follows by assumption. Let $\beta=\gamma+1$. Then by IH of the primary induction  $\theta^\gamma(Y_f,f)\leq \theta^\beta(Y_f,f)$ and $\theta^\beta(Y_f,f)\in\Theta^\beta(Y_f)\subseteq Y_f$. Then by assumption we know that $\Phi(\theta^\beta(Y_f,f))\cap\Theta^\beta(Y_f)\neq\emptyset$ and $\Theta^\beta(Y_f)\subseteq\Theta^\gamma(Y_f)$. By Lemma \ref{gmon2} and Lemma \ref{gensub} this implies
$$\theta^\beta(Y_f,f)=\theta(\Theta^\gamma(Y_f),\theta^\gamma(Y_f,f))\leq\theta(\Theta^\beta(Y_f),\theta^\beta(Y_f,f))=\theta^\alpha(Y_f,f).$$
Moreover, by Lemma \ref{wdef} we also know that
$$\theta^\alpha(Y_f,f)\in\Theta^\alpha(Y_f)\in{\sf Adm}_\mathfrak{M}.$$

Let $\beta$ be a limit ordinal. We show that $\Phi(\theta^\beta(Y_f,f))\cap Y_f\neq\emptyset$. Notice that by the definition of $\Phi$ (for arbitrary $\Phi$) this shows that $\Phi(\theta^\beta(Y_f,f))\in Y_f$. Let $\Phi^\beta:=\{\Phi(\theta^\gamma(Y_f,f))\,|\,\gamma<\beta\}$. Then by IH $\bigcap X\neq\emptyset$ for every finite subset $X$ of $\Phi^\beta$ and by compactness
$$\bigcap\Phi^{\beta}\neq\emptyset.$$
It is then straightforward to verify that $\theta^\beta(Y_f,f)\leq\theta(\Theta^\beta(Y_f),\theta^\beta(Y_f,f)=\theta^\alpha(Y_f,f)\in\Theta^\alpha(Y_f)\in{\sf Adm}_\mathfrak{M}$.\footnote{Assume by reductio that this is not the case. Then there must be a sentence $\varphi\in\theta^\gamma(Y_f,f)(J)$ for some $\gamma<\beta$, interpretation $J$ such that $\varphi\not\in\theta(\Theta^\gamma(Y_f,f),\theta^\gamma(Y_f,f)(J)$. Contradiction.}

Finally, let $\alpha$ be a limit ordinal. Then the claim follows by definition of $\theta^\alpha(Y_f,f)$ and the fact that $\theta^\alpha(Y_f,f)\in Y_f$. 

Since the sequence of $\theta^\alpha$ for $\alpha\in{\sf ON}$ is progressive we may infer that there is an ordinal $\xi$ such that $\theta(\Theta^\xi(Y_f),\theta^\xi(Y_f,f))=\theta^\xi(Y_f,f)\in\Theta^\xi(Y_f)$, that is, $\theta^\xi(Y_f,f)=\theta^\alpha(Y_f,f)$ for all $\alpha\geq\xi$. This also implies that $\Theta(\Theta^\xi(Y_f))=\Theta^\xi(Y_f)$ and thus $\Theta^\xi(Y_f)=\Theta^\alpha(Y_f)$ for $\xi\leq\alpha$.
\end{proof}

Proposition \ref{GFP} is an abstract fixed-point result in the sense that it does not show that we can find grounded sets of valuation functions $Y_f$ and admissibility conditions $\Phi$ that satisfy conditions \ref{c1}-\ref{c4}. However, if there are, we know that fixed points exist and naive truth models alongside.

\begin{rem}Our proof of the abstract fixed-point result consisted in simultaneously inductively defining the interpretation of the truth predicate at an interpretation as well as the grounded set of valuation functions of the Kripkean truth structure. It is worth highlighting that as a consequence of Lemma \ref{gensub} it turns out that if the inductive definition is successful, we may take the initial set of grounded valuation functions $Y_f$ as a parameter in the inductive definition of the fixed point of $\theta$.

This observation also allows us to provide an algebraic perspective of the abstract fixed point result: instead of showing the existence of fixed points of $\theta^{\Phi_{\sf e}}_\mathfrak{M}$ by conducting an inductive definition, one can employ a variant of the Knaster-Tarski fixed-point result: to this effect one shows that $(Y^{+}_{f},\leq)$ with
$$Y^{+}_{f}:=\{g\in\mathcal{B}_\mathfrak{M}\,|\,f\leq g\,\&\,\Phi_{\sf e}(g)\cap\mathcal{B}_\mathfrak{M}\neq\emptyset\}$$
is a CCPO, i.e., a consistent and complete partial order \citep[cf.~][p.~556]{vis89}.\footnote{Note that the compactness of $\Phi_{\sf e}$ is crucial to this effect.} One then needs to show that the operation $\theta^{\Phi_{\sf e}}_ {Y^{+}_{f}}$ with
$$\theta^{\Phi_{\sf e}}_ {Y^{+}_{f}}(g):=\theta^{\Phi_{\sf e}}_\mathfrak{M}(Y^{+}_{f},g)$$
for all $g\in Y^{+}_{f}$ is monotone on $Y^{+}_{f}$, i.e., one needs to show that for all $g,h\in Y^{+}_{f}$:
\begin{enumerate}[label=(\roman*)]
\item\label{on} $\theta^{\Phi_{\sf e}}_ {Y^{+}_{f}}(g),\theta^{\Phi_{\sf e}}_ {Y^{+}_{f}}(h)\in Y^{+}_{f}$;
\item\label{valmon} if $g\leq h$, then $\theta^{\Phi_{\sf e}}_ {Y^{+}_{f}}(g)\leq\theta^{\Phi_{\sf e}}_ {Y^{+}_{f}}(h)$.
\end{enumerate}
The crucial point is to show \ref{on}, as \ref{valmon} then follows from Lemma \ref{gmon2}. Notice that it follows form the assumption in Proposition \ref{GFP} that 
\begin{align*}g\leq\theta(Y_f,g)\in Y_f&\Rightarrow\Phi(\theta(Y_f,g))\cap Y_f\neq\emptyset,\end{align*}
for all $g\in Y_f$. Once one has established that $(Y^{+}_{f},\leq)$ is a CCPO and that $\theta^{\Phi_{\sf e}}_ {Y^{+}_{f}}$ is monotone on $Y^{+}_f$ the existence of fixed points of $\theta^{\Phi_{\sf e}}_ {Y^{+}_{f}}$ follows by the generalisation of Knaster-Tarski discussed in, e.g., \cite{vis89}. Moreover, $({\sf Fix}^{\Phi_{\sf e}}_{Y^{+}_{f}},\leq)$ is itself a CCPO with
$${\sf Fix}^{\Phi_{\sf e}}_{Y^{+}_{f}}:=\{g\in Y^{+}_{f}\,|\,\theta^{\Phi_{\sf e}}_ {Y^{+}_{f}}(g)=g\}.$$
We refer to \cite[p.~556-664]{vis89} for a general discussion of algebraic fixed-fixed point constructions.
\end{rem}

\subsection{Applications of Proposition \ref{GFP}}
We now wish to provide fixed-point results for specific admissibility condition. We start by introducing different admissibility conditions. \begin{de}Let $f\in\mathcal{B}_\mathfrak{M}$ and let $L$ be some logic. Then $f$ is $L$-saturated iff for all $J\in X$, all $\Delta\subseteq\lt$ all $\Gamma\subset f(J)$ such that
\begin{align*}{\rm if}\,\vdash_L\Gamma\Rightarrow\Delta,&\,\,{\rm then}\,\,\Delta\cap f(J)\neq\emptyset.\end{align*}

Moreover for $f\in{\sf Val}_\mathfrak{M}$ we set:
\begin{align*}
&&\Phi_{\sf c}(f) &:=\begin{cases}\emptyset,&\text{ if }f\not\in\mathcal{B}_\mathfrak{M};\\\{g\in\mathcal{B}_\mathfrak{M}\,|\,f\leq g\},&\text{ otherwise.}\end{cases}\\
&&\Phi_{\sf K3}(f)&:=\begin{cases}\emptyset,&\text{ if }f\not\in\mathcal{B}_\mathfrak{M};\\\{g\in\mathcal{B}_\mathfrak{M}\,|\,f\leq g\,\&\,g\text{ is {\sf K3}-saturated}\},&\text{ otherwise.}\end{cases}\\
&&\Phi_{\sf N3}(f) &:=\begin{cases}\emptyset,&\text{ if }f\not\in\mathcal{B}_\mathfrak{M};\\\{g\in\mathcal{B}_\mathfrak{M}\,|\,f\leq g\,\&\,g\text{ is {\sf N3}-saturated}\},&\text{ otherwise.}\end{cases}\\
&&\Phi_{\sf Nve}(f) &:=\begin{cases}\emptyset,&\text{ if }f\not\in\mathcal{B}_\mathfrak{M};\\\{g\in\mathcal{B}_\mathfrak{M}\,|\,f\leq g\,\&\,g\text{ is ${\sf K3\T}$-saturated}\},&\text{ otherwise.}\end{cases}
\end{align*}
\end{de}
The different logics are displayed in form of their sequent calculi in Appendix \ref{SeqC}. The logic ${\sf K3\T}$ is the naive logic of truth in the sense of, e.g., \cite{kre88}, i.e., the logic obtained from ${\sf K3}$ by the addition of naive truth rules.

The next step would be to show the existence of fixed points of the operation $\theta_{\sf e}$ and $\Theta_{\sf e}$ for ${\sf e}\in\{{\sf c},{\sf K3},{\sf N3},{\sf Nve}\}$, that is, to show that there is an $f\in\mathcal{B}_\mathfrak{M}$ and $Y_f:=\{g\leq f\,|\,g\in\mathcal{B}_\mathfrak{M}\}\in{\sf Adm}_\mathfrak{M}$ such that there exists $g\in Z_g\subseteq Y_f$ with $\theta^{\Phi_{\sf e}}_\mathfrak{M}(Z_g,g)=g$ and $\Theta^{\Phi_{\sf e}}_\mathfrak{M}(Z_g)=Z_g$. Indeed, if we let $f$ be the valuation function $f(J)=\emptyset$ for all $J\in X$, then one can show that $f$ and $Y_f$ satisfy all the conditions for applying Proposition \ref{GFP} for ${\sf e}\in\{{\sf c},{\sf K3},{\sf N3},{\sf Nve}\}$. However, we directly prove a stronger fixed-point result, that is, we show that we can fixed points $Z_g$ of $\Theta^{\Phi_{\sf e}}$ such that $h(J)$ is a fixed point of the conditional free fragment of $\lt$ ($\lt^-$)  for every $h\in Z_g$.
To this effect we let $\mathcal{B}^{\T}_\mathfrak{M}\subseteq\mathcal{B}_\mathfrak{M}$ be the set ov valuation functions such that for all $f\in \mathcal{B}^{\T}_\mathfrak{M}$ and all $J\in X$:
\begin{enumerate}[label=(\roman*)]
\item $f(J)$ consistently extends a ${\sf K3}$-fixed point $S$, i.e., $f(J)\cap\lt^-$ is ${\sf K3}\T$-saturated;
\item $f(J)$ is ${\sf K3}$-saturated;
\end{enumerate}

Our aim is to find a set $Y_f^{\T}\subseteq\mathcal{B}^{\T}_\mathfrak{M}$ such that for ${\sf e}\in\{{\sf K3},{\sf N3},{\sf Nve}\}$ (notice that $\Phi_{\sf c}$ and $\Phi_{\sf K3}$ coincide over $\mathcal{B}^{\T}_\mathfrak{M}$):
\begin{itemize}
\item $\Phi_{\sf e}$ is compact on $Y_f^{\T}$;
\item $f\leq\theta^{\Phi_{\sf e}}(Y^{\T}_f,f)\in Y_f^{\T}$;
\item for all $g\in Y^{\T}_f$:
$$\text{ if }g\leq\theta^{\Phi_{\sf e}}(Y^{\T}_f,g)\in Y^{\T}_f,\text{ then }\Phi_{\sf e}(\theta^{\Phi_{\sf e}}(Y^{\T}_f,g))\cap Y_f\neq\emptyset.$$
\end{itemize}
Once we find such a set, the existence of fixed points follows from Proposition \ref{GFP}.

Let $\mathcal{K}:\mathcal{P}({\sf Sent})\rightarrow\mathcal{P}({\sf Sent})$ be an operation such that for all $J\in X$, $g\in{\sf Val}_\mathfrak{M}$ and all $\varphi\in\lt$:
\begin{align*}
\varphi\in\mathcal{K}(g(J))&\text{ iff }\begin{cases}
J(t)=J(s),&\text{ if }\varphi\doteq(t=s);\\
J(t)\neq J(s),&\text{ if }\varphi\doteq(\neg t=s);\\
\langle J(t_1),\ldots,J(t_n)\rangle\in J^+(P^n),&\text{ if }\varphi\doteq(P t_1,\ldots,t_n);\\
\langle J(t_1),\ldots,J(t_n)\rangle\in J^-(P^n),&\text{ if }\varphi\doteq(\neg P t_1,\ldots,t_n);\\
\psi\in g(J), &\text{ if }\varphi\doteq\T t\,\&\,J(t)=\psi;\\
\neg\psi\in g(J), &\text{ if }\varphi\doteq\neg \T t\,\&\,J(t)=\psi;\\
\psi\in g(J)\,\&\,\chi\in g(J),&\text{ if }\varphi\doteq(\psi\wedge\chi);\\
\neg\psi\in gl(J)\text{ or }\neg\chi\in g(J),&\text{ if }\varphi\doteq\neg (\psi\wedge\chi);\\
\psi(c)\in g(J)\text{ for all }c\in{\sf Const}_{\lt},&\text{ if }\varphi\doteq\forall x\psi;\\
\psi(c)\in g(J)\text{ for some }c\in{\sf Const}_{\lt},&\text{ if }\varphi\doteq\neg\forall x\psi;
\end{cases}\end{align*}
Since $\mathcal{K}$ is monotone on $\mathcal{B}$, and $\mathcal{K}(g(J))$ is consistent for all $J\in X$ and $g\in\mathcal{B}$, we know that there is an $f_{\sf k}\in\mathcal{B}$ such that $f_{\sf k}(J)$ is the minimal $\mathcal{K}$ fixed point over the set $\{\varphi\,|\,(D,J)\Vdash_{\sf K3}\varphi\}$ for all $J\in X$. Moreover, by the properties of $\mathcal{K}$ clearly $f_{\sf k}\in\mathcal{B}^{\T}_\mathfrak{M}$.

\begin{lem}\label{MonT}Let ${\sf e}\in\{{\sf c},{\sf K3},{\sf N3},{\sf Nve}\}$ and $Y^{\T}_{f_{\sf k}}:=\{g\in\mathcal{B}^{\T}_\mathfrak{M}\,|\,f_{\sf k}\leq g\}$ with $f_{\sf k}\in\mathcal{B}^{\T}_\mathfrak{M}$ as discussed above. Then
\begin{enumerate}[label=(\alph*)]
\item $f_{\sf k}\leq\theta^{\Phi_{\sf e}}(Y^{\T}_{f_{\sf k}},f_{\sf k})\in Y_{f_{\sf k}}^{\T}$;
\item for all $g\in Y^{\T}_f$,  if $g\leq\theta^{\Phi_{\sf e}}(Y^{\T}_f,g)\in Y^{\T}_f$, then $\Phi_{\sf e}(\theta^{\Phi_{\sf e}}(Y^{\T}_f,g))\cap Y_f\neq\emptyset.$ 
\end{enumerate}
\end{lem}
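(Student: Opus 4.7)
The plan is to split along the two clauses and lean on the construction of $f_{\sf k}$ together with a standard Lindenbaum-style argument for (b).

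For part (a), I would first establish $f_{\sf k}\leq\theta^{\Phi_{\sf e}}(Y^{\T}_{f_{\sf k}},f_{\sf k})$ by an induction on the positive complexity of sentences $\varphi\in f_{\sf k}(J)$, showing that $(D,X\times Y^{\T}_{f_{\sf k}},H_{\Phi_{\sf e}}),J_{f_{\sf k}}\Vdash\varphi$. The atomic literal cases and the $\T$-literal case are immediate from the definition of $J_{f_{\sf k}}$ and of $\mathcal{K}$; the conjunction, negation, and universal quantifier cases propagate by the inductive hypothesis (using that for each $d\in D$ there is a constant naming it). No conditional case arises since the clauses defining $\mathcal{K}$ generate no $\rightarrow$-formulae. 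Having established the inequality, membership of $\theta^{\Phi_{\sf e}}(Y^{\T}_{f_{\sf k}},f_{\sf k})$ in $Y^{\T}_{f_{\sf k}}$ requires showing (i) that the $\lt^-$-fragment of the new valuation at each $J$ still consistently extends a ${\sf K3}$-fixed point, and (ii) ${\sf K3}$-saturation. For (i), the $\lt^-$-part of $\theta^{\Phi_{\sf e}}(Y^{\T}_{f_{\sf k}},f_{\sf k})(J)$ contains $f_{\sf k}(J)$ by the inequality just shown and remains consistent because truth in the supervaluation structure is consistent; moreover a straightforward closure argument shows that this part is itself a $\mathcal{K}$-fixed point. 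For (ii) one observes that $\mathsf{K3}$-soundness relative to strong Kleene evaluation at $J_{f_{\sf k}}$ (together with the supervaluation truth conditions for $\rightarrow$, which are compatible with the $\mathsf{K3}$-rules for the $\rightarrow$-free fragment) yields the required saturation property.

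For part (b), fix $g\in Y^{\T}_f$ with $g\leq\theta^{\Phi_{\sf e}}(Y^{\T}_f,g)\in Y^{\T}_f$; the aim is to produce some $h\in Y_f\subseteq\mathcal{B}^{\T}_\mathfrak{M}$ with $\theta^{\Phi_{\sf e}}(Y^{\T}_f,g)\leq h$ and $h$ satisfying the $\mathsf{e}$-saturation clause. I would carry out a Lindenbaum-style construction simultaneously at every $J\in X$: enumerate the sequents whose antecedents lie in the value of the valuation and successively add members of the succedents whenever consistency permits, making the choice uniform along $H$ so that $h(J)\subseteq h(J')$ whenever $(J,J')\in H$. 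The starting valuation $\theta^{\Phi_{\sf e}}(Y^{\T}_f,g)$ is basic by Lemma \ref{wdef}, hence consistent and $H$-monotone, which provides the base for the recursion. For $\mathsf{e}\in\{\mathsf{c},\mathsf{K3},\mathsf{Nve}\}$ the relevant saturation property is preserved under taking unions of chains of consistent extensions; for $\mathsf{N3}$ one must additionally ensure that newly added conditionals do not generate further saturation demands that break consistency, which can be handled by ordering the enumeration so that $\rightarrow$-succedents are processed after their antecedents.

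The main obstacle will be preserving membership in $\mathcal{B}^{\T}_\mathfrak{M}$ throughout the extension: the $\lt^-$-fragment must continue to consistently extend a $\mathsf{K3}$-fixed point at each $J$, and the valuation must remain $H$-monotone. Both are subtle because the extension is performed globally on $X$ rather than interpretation-by-interpretation. The key observation I would rely on is that the $\lt^-$-fragment is already fixed as the minimal $\mathcal{K}$-fixed point determined by $J$ and by the $\T$-values inherited from $\theta^{\Phi_{\sf e}}(Y^{\T}_f,g)$, so the only genuinely free choices are for $\T$-literals and for $\rightarrow$-formulae outside this fragment; restricting the Lindenbaum procedure to such formulae keeps the $\lt^-$-fragment intact. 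Finally, I would appeal to the fact that any consistent, $H$-monotone, $\mathsf{e}$-saturated extension of a valuation in $\mathcal{B}^{\T}_\mathfrak{M}$ is itself in $\mathcal{B}^{\T}_\mathfrak{M}$ and hence in $Y^{\T}_f$ once $f\leq g$, yielding the required witness in $\Phi_{\mathsf{e}}(\theta^{\Phi_{\sf e}}(Y^{\T}_f,g))\cap Y_f$.
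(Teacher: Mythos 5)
Part (a) of your proposal is essentially the paper's ``straightforward to verify'': the inductive verification that $f_{\sf k}\leq\theta^{\Phi_{\sf e}}(Y^{\T}_{f_{\sf k}},f_{\sf k})$, agreement on the $\rightarrow$-free fragment, and saturation of the output are exactly the points the paper gestures at. For part (b), however, you miss the short route for ${\sf e}\in\{{\sf c},{\sf K3},{\sf N3}\}$: since $\theta^{\Phi_{\sf e}}(Y^{\T}_{f},g)(J)$ is the set of sentences true at a point of an ${\sf N3}$-structure, it is automatically ${\sf N3}$- (hence ${\sf K3}$-) saturated, so under the hypothesis $\theta^{\Phi_{\sf e}}(Y^{\T}_{f},g)\in Y^{\T}_{f}$ the $\theta$-output is \emph{itself} a member of $\Phi_{\sf e}(\theta^{\Phi_{\sf e}}(Y^{\T}_{f},g))\cap Y^{\T}_{f}$; no Lindenbaum construction is needed for these three cases.

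The genuine gap is in the only case where a nontrivial extension is required, ${\sf e}={\sf Nve}$. First, your pivotal claim that restricting the Lindenbaum procedure to $\T$-literals and conditionals ``keeps the $\lt^-$-fragment intact'' is wrong: $\T$-literals are themselves $\lt^-$-sentences, and ${\sf K3}\T$-saturation forces $\T\gn\chi$ into the valuation for every conditional $\chi$ it already contains, so the $\lt^-$-part does change and its ${\sf K3}\T$-saturation (required for membership in $\mathcal{B}^{\T}_\mathfrak{M}$) has to be re-established, not preserved by fiat. Second, and more fundamentally, a Lindenbaum recursion that adds succedent members ``whenever consistency permits'' only yields a ${\sf K3}\T$-saturated extension if the ${\sf K3}\T$-closure of the starting set $\theta^{\Phi_{\sf Nve}}(Y^{\T}_{f},g)(J)$ is consistent in the first place, and your proposal never establishes this; it is precisely the substantive content of the lemma (e.g.\ it would fail if the Curry sentence $\kappa$ were true at $J_g$). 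The paper supplies exactly this missing ingredient semantically rather than proof-theoretically: one observes that the $\theta$-output is contained in its own Kripke jump $\mathcal{K}$ and is consistent, so iterating the monotone, consistency-preserving operation $\mathcal{K}$ above it produces a consistent fixed point $S_J\supseteq\theta^{\Phi_{\sf Nve}}(Y^{\T}_{f},g)(J)$; setting $g'(I):=S_I$ gives a ${\sf K3}\T$-saturated valuation in $Y^{\T}_{f_{\sf k}}$ witnessing $\Phi_{\sf Nve}(\theta^{\Phi_{\sf Nve}}(Y^{\T}_{f},g))\cap Y^{\T}_{f_{\sf k}}\neq\emptyset$. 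Without an argument of this kind (or some other proof that the naive closure of the $\theta$-output is consistent and can be arranged $H$-monotonically while respecting clause (ii) of Definition \ref{adval}), your construction has no guarantee of terminating in the required witness.
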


\begin{proof}
(a) is straightforward to verify. Notice that $\theta_{\Phi_{\sf e}}( Z_g,g)$ is ${\sf N3}$-saturated for all $g\in Z_g\in{\sf Adm}_\mathfrak{M}$ for ${\sf e}\in\{{\sf c},{\sf K3},{\sf N3},{\sf Nve}\}$, and that for $\varphi\in\lt^{-}$:
\begin{align*}\varphi\in f_{\sf k}(J)&\,{\rm iff }\,\varphi\in\theta^{\Phi_{\sf e}}(Y^{\T}_{f_{\sf k}},f_{\sf k}).\end{align*}

For (b) notice that for ${\sf e}\in\{{\sf c},{\sf K3},{\sf N3}\}$, the claim is immediate since $\theta^{\Phi_{\sf e}}_\mathfrak{M}(Y_{f_{\sf k}}^{\T},g)\in\Phi(\theta^{\Phi_{\sf e}}(Y^{\T}_{f_{\sf k}},g))$. For ${\sf e}={\sf Nve}$ we observe that $\theta^{\Phi_{\sf e}}_\mathfrak{M}(Y_{f_{\sf k}}^{\T},g)(J)\subseteq\mathcal{K}([\theta^{\Phi_{\sf e}}_\mathfrak{M}(Y_{f_{\sf k}}^{\T},g)(J)])$ for all $J\in X$. Moreover, since by assumption $\theta^{\Phi_{\sf e}}_\mathfrak{M}(Y_{f_{\sf k}}^{\T},g)(J)$ is consistent if follows that $\mathcal{K}([\theta^{\Phi_{\sf e}}_\mathfrak{M}(Y_{f_{\sf k}}^{\T},g)(J)])$ is consistent. This means that there is a consistent set $S_J\supseteq\theta^{\Phi_{\sf e}}_\mathfrak{M}(Y_{f_{\sf k}}^{\T},g)(J)$ such that $\mathcal{K}(S_J)=S_J$. Define a valuation function $g'$ such that for all $I\in X$:
$$g'(I):=S_I.$$ 
Then we have $g'\geq \theta^{\Phi_{\sf e}}_\mathfrak{M}(Y_{f_{\sf k}}^{\T},g)$ and $g'$ is ${\sf K3}\T$-saturated and $g'\in Y^{\T}_{f_{\sf k}}$: $$\Phi_{\sf e}(\theta^{\Phi_{\sf e}}(Y^{\T}_{f_{\sf k}},g))\cap Y^{\T}_{\sf k}\neq\emptyset.$$

\end{proof}

It remains to show that $\Phi_{\sf e}$ is compact on $Y^{\T}_{\mathfrak{M}}$ before we can apply Proposition \ref{GFP}.

\begin{lem}\label{CPL}Let $Y^{\T}_{f_{\sf k}}$ be defined as in Lemma \ref{MonT} and ${\sf e}\in\{{\sf c},{\sf K3},{\sf N3},{\sf Nve}\}$. Then $\Phi_{\sf e}$ is compact on $Y^{\T}_{f_{\sf k}}$.
\end{lem}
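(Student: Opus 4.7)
The plan is to prove compactness via a finite-intersection-property argument that leverages the finitariness of the sequent calculi for ${\sf K3}$, ${\sf N3}$, and ${\sf K3\T}$. Fix $X \subseteq Y^{\T}_{f_{\sf k}}$ such that for every finite $F \subseteq X$ there exists $g_{F} \in \bigcap_{f \in F}\Phi_{\sf e}(f)$; I aim to produce a valuation lying in $\bigcap_{f \in X}\Phi_{\sf e}(f)$.

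The first step is to define $g^{*}(J) := \bigcup_{f \in X} f(J)$ for each $J \in X$ and verify $g^{*} \in \mathcal{B}_{\mathfrak{M}}$. Conditions (ii) and (iii) of Definition \ref{adval} are immediate, since each member of $X$ satisfies them and both are preserved by pointwise unions. For the consistency clause (i), observe that if $\varphi \in f(J)$ and $\neg\varphi \in f'(J)$ for $f,f' \in X$, then any $h \in \Phi_{\sf e}(f) \cap \Phi_{\sf e}(f')$---nonempty by hypothesis---would witness both at $J$, contradicting consistency of $h \in \mathcal{B}_{\mathfrak{M}}$. This is precisely the place where the finite intersection property is used in its raw form.

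Since $g^{*} \geq f$ for every $f \in X$, any $g \in \Phi_{\sf e}(g^{*})$ automatically lies in $\bigcap_{f \in X} \Phi_{\sf e}(f)$, so it suffices to show $\Phi_{\sf e}(g^{*}) \neq \emptyset$. For ${\sf e} = {\sf c}$ this is immediate with $g = g^{*}$. For ${\sf e} \in \{{\sf K3}, {\sf N3}, {\sf Nve}\}$, I would extend $g^{*}$ to an ${\sf e}$-saturated member of $\mathcal{B}_{\mathfrak{M}}$ by a Lindenbaum-style construction: enumerate pairs $(J,\varphi)$ from $X \times {\sf Sent}_{\lt}$ and, at each stage, add $\varphi$ to $g(J)$ while simultaneously propagating it to every $g(J')$ with $(J,J') \in H$, accepting the addition whenever finite consistency is preserved at every $J'$ and rejecting otherwise. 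Condition (ii) of Definition \ref{adval} is maintained because a false $\mathcal{L}$-sentence added to $g^{*}(J)$ would, since any finite portion of $g^{*}(J)$ is contained in some saturated $g_{F}(J)$, force such a sentence into $g_{F}(J)$, violating (ii) for $g_{F}$.

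The main obstacle is verifying saturation in the limit: namely, that for any provable sequent $\Gamma \Rightarrow \Delta$ with $\Gamma \subseteq g(J)$ some $\delta \in \Delta$ has been added to $g(J)$. This is where finitariness is decisive. Since $\Gamma$ may be taken finite, $\Gamma \subseteq g^{(n)}(J)$ at some stage of the construction, and moreover $\Gamma \subseteq g_{F}(J)$ for some finite $F \subseteq X$; the ${\sf e}$-saturation of $g_{F}$ then supplies a witness $\delta \in \Delta \cap g_{F}(J)$, and the stage processing $(J,\delta)$ must have accepted $\delta$ because its addition to the accumulating theory is consistent (every finite portion being extendable through $g_{F}$ or a refinement thereof). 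This yields a saturated $g \in \mathcal{B}_{\mathfrak{M}}$ with $g^{*} \leq g$, whence $g \in \bigcap_{f \in X} \Phi_{\sf e}(f)$, establishing compactness.
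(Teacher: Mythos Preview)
Your overall strategy---form the pointwise union $g^*$, verify it is basic, then Lindenbaum-extend to an ${\sf e}$-saturated valuation---matches the paper's, but your saturation argument has a gap. You claim that for finite $\Gamma\subseteq g(J)$ with $\Gamma\vdash_{\sf e}\Delta$ one has $\Gamma\subseteq g_F(J)$ for some finite $F\subseteq X$, so that the ${\sf e}$-saturation of $g_F$ supplies a witness $\delta\in\Delta$ which ``must have been accepted''. But $g$ is the limit of your Lindenbaum construction, which adds formulas not lying in any $f(J)$ for $f\in X$; a finite $\Gamma\subseteq g(J)$ may well contain such added formulas, and then there is no $F$ with $\Gamma\subseteq g_F(J)$. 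Your parenthetical justification ``every finite portion being extendable through $g_F$ or a refinement thereof'' fails for the same reason: once Lindenbaum has added sentences, the accumulated theory need not embed into any fixed $g_F$.

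The fix is the standard maximality argument: your construction yields, at each $J$, a maximal ${\sf e}$-consistent set (respecting $H$), and maximality gives saturation directly via cut in the relevant calculus. For this to get started you must also verify that $g^*(J)$ is ${\sf e}$-\emph{consistent}, not merely free of explicit contradictions as in your clause~(i). This is precisely what the paper checks first: if some finite $\Gamma\subseteq g^*(J)$ had $\Gamma\vdash_{\sf e}\bot$, then $\Gamma\subseteq f_1(J)\cup\cdots\cup f_n(J)$ for some $f_1,\ldots,f_n\in X$, and any $h\in\Phi_{\sf e}(f_1)\cap\cdots\cap\Phi_{\sf e}(f_n)$ would be ${\sf e}$-saturated with $\Gamma\subseteq h(J)$, hence ${\sf e}$-inconsistent---contradicting the finite intersection hypothesis.
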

\begin{proof}Let $Z$ be an arbitrary subset of $Y^{\T}_{f_{\sf k}}$ and assume that $\Phi_{\sf e}(f_1)\cap\ldots\cap\Phi_{\sf e}(f_n)\neq\emptyset$ for all $n\in\omega$ and $f_1,\ldots,f_n\in Z$.


Now let ${\sf e}\in\{{\sf K3,N3,K3\T}\}$. We show that $\bigcup\{f(J)\,|\,f\in Z\}$ is ${\sf e}$-consistent. Assume for reductio that $\bigcup\{f(J)\,|\,f\in Z\}$ is not ${\sf e}$-consistent for some $J\in X$, that is, there is a finite set of formulas $\Gamma\subseteq\bigcup\{f(J)\,|\,f\in Z\}$ such that
$\Gamma\vdash_{\sf e}\bot$. But then there must be an $f_1,\ldots,f_n\in Z$ for some $n\in\omega$ such that $\Gamma\subseteq f_1(J)\cup\ldots\cup f_n(J)$. This implies that $\Phi_{\sf e}(f_1)\cap\ldots\cap\Phi_{\sf e}(f_n)=\emptyset$: contradiction. Moreover, since every $f(J)$ is ${\sf K3}$-saturated and $\bigcup\{f(J)\,|\,f\in Z\}$ is ${\sf K3}$-saturated and by essentially the same reasoning $\bigcup\{f(J)\,|\,f\in Z\}\cap\lt^{-}$ is ${\sf K3}\T$ saturated. This guarantees that we can extend $\bigcup\{f(J)\,|\,f\in Z\}$ to an ${\sf e}$-saturated set $g(J)$ for ${\sf e}\in\{{\sf N3},{\sf K3}\T\}$ such that $g(J)\cap\lt^{-}$ is ${\sf K3}\T$ saturated and then define an admissible valuation function $g$ accordingly. By construction $g\in\bigcap_{f\in Z}\Phi(f)$.
\end{proof}
We have now gathered all relevant facts to establish the existence of fixed points of $\theta^{\Phi_{\sf e}}$ and $\Theta^{\Phi_{\sf e}}$ for ${\sf e}\in\{{\sf c},{\sf K3},{\sf N3},{\sf Nve}\}$.
\begin{prop}\label{CFPT}Let $\mathfrak{M}$ be a supervaluation structure and let $Y^{\T}_{f_{\sf k}}$ be defined as in Lemma \ref{MonT} and ${\sf e}\in\{{\sf c},{\sf K3},{\sf N3},{\sf Nve}\}$. Then there exists $g\in Z_g\subseteq Y^{\T}_f$ such that $\theta^{\Phi_{\sf e}}_\mathfrak{M}(Z_g,g)=g$ and $\Theta^{\Phi_{\sf e}}_\mathfrak{M}(Z_g)=Z_g$.
\end{prop}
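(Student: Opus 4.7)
The strategy is to verify that $f := f_{\sf k}$ together with $Y_f := Y^{\T}_{f_{\sf k}}$ satisfies the hypotheses of Proposition \ref{GFP} for each ${\sf e}\in\{{\sf c},{\sf K3},{\sf N3},{\sf Nve}\}$, and then read off the desired $g$ and $Z_g$ from its conclusion. The substantive ingredients have already been arranged by the preceding lemmas: Lemma \ref{MonT}(a) supplies the growth condition $f_{\sf k}\leq\theta^{\Phi_{\sf e}}(Y^{\T}_{f_{\sf k}},f_{\sf k})\in Y^{\T}_{f_{\sf k}}$; Lemma \ref{MonT}(b) supplies the required non-emptiness $\Phi_{\sf e}(\theta^{\Phi_{\sf e}}(Y^{\T}_{f_{\sf k}},g))\cap Y^{\T}_{f_{\sf k}}\neq\emptyset$ for admissible $g$; Lemma \ref{CPL} gives compactness of $\Phi_{\sf e}$ on $Y^{\T}_{f_{\sf k}}$; and upward closure is immediate from the definition $Y^{\T}_{f_{\sf k}}=\{g\in\mathcal{B}^{\T}_\mathfrak{M}\mid f_{\sf k}\leq g\}$ (read relative to the restricted universe $\mathcal{B}^{\T}_\mathfrak{M}$, for which the proof of Proposition \ref{GFP} goes through verbatim).

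Applying Proposition \ref{GFP} then produces an ordinal $\xi$ with
\[\theta^\xi(Y^{\T}_{f_{\sf k}},f_{\sf k})\in\Theta^\xi(Y^{\T}_{f_{\sf k}})=\Theta^\alpha(Y^{\T}_{f_{\sf k}})\text{ for all }\alpha\geq\xi.\]
I would then set $g := \theta^\xi(Y^{\T}_{f_{\sf k}},f_{\sf k})$ and $Z_g := \Theta^\xi(Y^{\T}_{f_{\sf k}})$. The displayed inclusion gives $g\in Z_g\subseteq Y^{\T}_{f_{\sf k}}$, and by the very definition of $\Theta^\xi$ every $h\in Z_g$ satisfies $\theta^\xi(Y^{\T}_{f_{\sf k}},f_{\sf k})\leq h$, i.e.\ $g\leq h$; hence $g$ is the $\leq$-minimal element of $Z_g$.

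The two fixed-point equations then fall out of stabilisation at $\xi$. For $\theta^{\Phi_{\sf e}}_\mathfrak{M}(Z_g,g)=g$, unfolding the recursion yields $\theta^{\Phi_{\sf e}}_\mathfrak{M}(Z_g,g)=\theta(\Theta^\xi(Y^{\T}_{f_{\sf k}}),\theta^\xi(Y^{\T}_{f_{\sf k}},f_{\sf k}))=\theta^{\xi+1}(Y^{\T}_{f_{\sf k}},f_{\sf k})=\theta^\xi(Y^{\T}_{f_{\sf k}},f_{\sf k})=g$. For $\Theta^{\Phi_{\sf e}}_\mathfrak{M}(Z_g)=Z_g$, minimality of $g$ gives $\Theta^{\Phi_{\sf e}}_\mathfrak{M}(Z_g)=\{h\in Z_g\mid \theta^{\Phi_{\sf e}}(Z_g,g)\leq h\}=\{h\in Z_g\mid g\leq h\}=Z_g$. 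One can alternatively cite Lemma \ref{AFL} once $\theta^{\Phi_{\sf e}}_\mathfrak{M}(Z_g,g)\in Z_g$ is noted.

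The main obstacle here is essentially bookkeeping: the real work lives in the preceding lemmas, in particular in the ${\sf Nve}$ case of Lemma \ref{MonT}(b), where one must convert local consistency of $\mathcal{K}([\theta^{\Phi_{\sf e}}_\mathfrak{M}(Y_{f_{\sf k}}^{\T},g)(J)])$ into an actual saturating valuation function $g'\in Y^{\T}_{f_{\sf k}}$, and in the compactness argument of Lemma \ref{CPL}. Granted those, Proposition \ref{CFPT} reduces to verifying the hypotheses of Proposition \ref{GFP} and transporting its conclusion to the pair $(g,Z_g)$ as above.
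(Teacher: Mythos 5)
Your proposal is correct and follows essentially the same route as the paper, whose proof of Proposition \ref{CFPT} is simply the citation of Proposition \ref{GFP} together with Lemmas \ref{MonT} and \ref{CPL}. You merely spell out the bookkeeping the paper leaves implicit (the choice $g=\theta^\xi(Y^{\T}_{f_{\sf k}},f_{\sf k})$, $Z_g=\Theta^\xi(Y^{\T}_{f_{\sf k}})$, and the reading of upward closure relative to $\mathcal{B}^{\T}_\mathfrak{M}$), which is a faithful elaboration rather than a different argument.
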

\begin{proof} By Proposition \ref{GFP}, and Lemmas \ref{MonT} and \ref{CFPT}.\end{proof}

\begin{cor}\label{CFPTM}Let $\mathfrak{M}=(D,X,H)$ be a supervaluation structure and ${\sf e}\in\{{\sf c},{\sf K3},{\sf N3},{\sf Nve}\}$.  Then there exists a grounded truth set $Y_f\subseteq \mathcal{B}^{\T}_\mathfrak{M}$ and admissible valuation function $f$ such that for all $\varphi\in{\sf Sent}_{\lt}$
\begin{align*}(D,X\times Y_f,H_{\Phi_{\sf e}}),J_f\Vdash\varphi&\text{ iff }(D,X\times Y_f,H_{\Phi_{\sf e}}),J_f\Vdash\T\gn\varphi\end{align*}
for all $J\in X$.\end{cor}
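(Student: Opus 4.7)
The plan is to unpack the fixed point produced by Proposition \ref{CFPT} and show that the required biconditional falls out more or less immediately from the definitions.

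First, I would apply Proposition \ref{CFPT} to the given supervaluation structure $\mathfrak{M}=(D,X,H)$ and the chosen ${\sf e}\in\{{\sf c},{\sf K3},{\sf N3},{\sf Nve}\}$. This yields a grounded truth set $Z_g\subseteq Y^{\T}_{f_{\sf k}}\subseteq\mathcal{B}^{\T}_\mathfrak{M}$ with minimal element $g$ such that simultaneously $\theta^{\Phi_{\sf e}}_\mathfrak{M}(Z_g,g)=g$ and $\Theta^{\Phi_{\sf e}}_\mathfrak{M}(Z_g)=Z_g$. Setting $Y_f:=Z_g$ and $f:=g$ furnishes the witnesses required by the statement of the corollary; it remains only to verify the biconditional.

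Next, I would fix an arbitrary $J\in X$ and $\varphi\in{\sf Sent}_{\lt}$ and chase through the definitions. On one hand, by Definition \ref{adrel}, the truth-interpretation $J_f$ interprets the predicate $\T$ by $f(J)$, so the clauses of Definition \ref{TSV} for atomic formulas give
\begin{align*}
(D,X\times Y_f,H_{\Phi_{\sf e}}),J_f\Vdash\T\gn\varphi \ \ \text{iff}\ \ \varphi\in f(J),
\end{align*}
using that $J^{\beta}(\gn\varphi)=\varphi$ by the syntax-theoretic assumptions on $\mathfrak{M}$ imposed at the start of Section \ref{TS}. On the other hand, the fixed-point identity $\theta^{\Phi_{\sf e}}_\mathfrak{M}(Y_f,f)=f$ unwinds, by the very definition of $\theta^{\Phi_{\sf e}}_\mathfrak{M}$, to
\begin{align*}
\varphi\in f(J) \ \ \text{iff}\ \ (D,X\times Y_f,H_{\Phi_{\sf e}}),J_f\Vdash\varphi.
\end{align*}
Composing the two equivalences yields the desired biconditional for every $\varphi\in{\sf Sent}_{\lt}$ and every $J\in X$.

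There is essentially no obstacle here: all of the real work has been done in Proposition \ref{GFP} (the abstract fixed-point result) and in Lemmas \ref{MonT} and \ref{CPL} (verifying that $Y^{\T}_{f_{\sf k}}$ and $\Phi_{\sf e}$ satisfy the hypotheses), which together feed into Proposition \ref{CFPT}. The corollary is thus purely a matter of translating the fixed-point equation $\theta^{\Phi_{\sf e}}_\mathfrak{M}(Z_g,g)=g$ back into the semantic idiom of Definition \ref{TSV}. The only mild care needed is to be explicit that the assumption on $\mathcal{L}_S$ guarantees that each sentence is denoted by some term, so that the atomic $\T$-clause behaves as expected; this justifies identifying $J^{\beta}(\gn\varphi)$ with $\varphi$ and is the one place a reader may want a sentence of reassurance.
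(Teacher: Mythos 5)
Your proposal is correct and is essentially the argument the paper intends: the corollary is read off from Proposition \ref{CFPT} by taking $Y_f:=Z_g$, $f:=g$, and unwinding the fixed-point identity $\theta^{\Phi_{\sf e}}_\mathfrak{M}(Y_f,f)=f$ together with the atomic $\T$-clause of Definitions \ref{adrel} and \ref{TSV}. Your added remark about the syntax-theoretic assumptions guaranteeing that $\gn\varphi$ denotes $\varphi$ is a reasonable bit of explicitness, but it does not change the route.
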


Corollary \ref{CFPTM} establishes the existence of Kripkean truth structures. However, strictly speaking they do not provide us with supervaluation structures for the language $\lt$ that comprise Kripkean truth models. The reason is that a relation $H$ on $X$ was deemed an admissibility relation only if it is reflexive. For arbitrary Kripkean truth structures we are not guaranteed that $H_{\Phi_{\sf e}}$ is reflexive on $X\times Y_f$ for ${\sf e}\in\{{\sf K3, N3, Nve}\}$ as $Y_f$ may contain valuation functions $g$ such that $g(J)$ is not ${\sf e}$-saturated and thus $(J_g,J_g)\not\in H_{\Phi_{\sf e}}$. Yet, it is straightforward to obtain a Kripkean supervaluation structure for $\lt$ from a Kripkean truth structure $(D,X\times Y_f,H_{\Phi_{\sf e}})$ by forming the $f$-generated $\leq_{\Phi_{\sf e}}$-substructure.

\begin{cor}[Kripkean supervaluation structure]\label{KSS}Let $(D,X\times Y_f,H_{\Phi_{\sf e}})$ be a Kripkean truth structure for $\lt$ and ${\sf e}\in\{{\sf K3, N3, Nve}\}$. Set $$Y^{\leq_{\Phi_{\sf e}}}_f:=\{g\in Y_f\,|\,f\leq_{\Phi_{\sf e}}g\}.$$
Then $(D,X\times Y^{\leq_{\Phi_{\sf e}}}_f,H_{\Phi_{\sf e}})$ is a Kripkean truth structure such that $H_{\Phi_{\sf e}}$ is reflexive on $X\times Y^{\leq_{\Phi_{\sf e}}}_f$ and $\theta^{\Phi_{\sf e}}_\mathfrak{M}(Y^{\leq_{\Phi_{\sf e}}}_f,f)=f$.\end{cor}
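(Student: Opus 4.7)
The aim is to verify three claims: that $f \leq_{\Phi_{\sf e}} f$, so that $f$ itself belongs to $Y^{\leq_{\Phi_{\sf e}}}_f$ as its $\leq$-minimal element; that $H_{\Phi_{\sf e}}$ is reflexive on $X \times Y^{\leq_{\Phi_{\sf e}}}_f$; and that the fixed-point equation $\theta^{\Phi_{\sf e}}_\mathfrak{M}(Y^{\leq_{\Phi_{\sf e}}}_f, f) = f$ survives the restriction. Throughout I would rely on the transitivity of $\leq_{\Phi_{\sf e}}$ recorded after Definition \ref{ordac}, together with the fact that by construction every member of $\Phi_{\sf e}(h)$ is itself ${\sf e}$-saturated.

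First I would argue $f \in \Phi_{\sf e}(f)$ by showing $f$ is ${\sf e}$-saturated for each ${\sf e} \in \{{\sf K3}, {\sf N3}, {\sf Nve}\}$. Membership of $f$ in $\mathcal{B}^{\T}_{\mathfrak{M}}$ yields ${\sf K3}$-saturation directly, while ${\sf N3}$-saturation at each $f(J)$ follows from the observation (used already in the proof of Lemma \ref{MonT}(a)) that the supervaluational truth conditions for $\rightarrow$ guarantee that $\theta^{\Phi_{\sf e}}(Y_f, f)$ is ${\sf N3}$-saturated; since $f$ is a fixed point by hypothesis, this saturation is inherited. For the ${\sf Nve}$ case one additionally combines ${\sf K3}$-saturation of $f(J)$ with the naivety biconditional $\varphi \in f(J) \Leftrightarrow \T\gn\varphi \in f(J)$ supplied by the hypothesis that the ambient structure is Kripkean, upgrading ${\sf K3T}$-saturation from $\lt^{-}$ to the whole of $\lt$. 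Reflexivity of $H_{\Phi_{\sf e}}$ then falls out: any $g \in Y^{\leq_{\Phi_{\sf e}}}_f$ lies in $\Phi_{\sf e}(f)$, hence is ${\sf e}$-saturated, hence belongs to $\Phi_{\sf e}(g)$; this gives $g \leq_{\Phi_{\sf e}} g$, and together with reflexivity of $H$ on $X$ (clause (iii) of Definition \ref{SVS}) yields $(J_g, J_g) \in H_{\Phi_{\sf e}}$.

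The third claim is proved in direct analogy with Lemma \ref{gensub}. I would show by induction on the positive complexity of $\varphi$ that for every $g \in Y^{\leq_{\Phi_{\sf e}}}_f$, every $J \in X$, every assignment $\beta$, and every $\varphi \in \lt$,
$$(D, X \times Y_f, H_{\Phi_{\sf e}}), J_g \Vdash \varphi[\beta] \ \Longleftrightarrow\ (D, X \times Y^{\leq_{\Phi_{\sf e}}}_f, H_{\Phi_{\sf e}}), J_g \Vdash \varphi[\beta].$$
Only the conditional case requires work. The set of $H_{\Phi_{\sf e}}$-accessible truth-interpretations at $J_g$ in the full structure is $\{J'_h : (J,J') \in H,\ g \leq_{\Phi_{\sf e}} h,\ h \in Y_f\}$; transitivity of $\leq_{\Phi_{\sf e}}$ turns the chain $f \leq_{\Phi_{\sf e}} g \leq_{\Phi_{\sf e}} h$ into $f \leq_{\Phi_{\sf e}} h$, so each such $h$ already lies in $Y^{\leq_{\Phi_{\sf e}}}_f$. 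Hence the two accessible sets coincide, and the induction hypothesis closes the case. Specialising to $g = f$ and taking the union over $J \in X$ gives $\theta^{\Phi_{\sf e}}_\mathfrak{M}(Y^{\leq_{\Phi_{\sf e}}}_f, f) = \theta^{\Phi_{\sf e}}_\mathfrak{M}(Y_f, f) = f$; the remaining structural conditions from Definition \ref{dets} and the naivety of $J_f$ are then inherited from the ambient structure, so the restricted object is a Kripkean truth structure.

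The main obstacle is the first step in the ${\sf Nve}$ case: one must check that ${\sf K3T}$-saturation propagates from $\lt^{-}$ to all of $\lt$. The crucial lever is the naivety biconditional holding at $J_f$ in the original Kripkean truth structure, which allows one to rewrite any $\T$-sentence in a putative ${\sf K3T}$-deduction into an $\lt^{-}$-sentence of equal membership status in $f(J)$, thereby reducing the question to ${\sf K3T}$-saturation on $\lt^{-}$, which is already in hand from $f \in \mathcal{B}^{\T}_{\mathfrak{M}}$.
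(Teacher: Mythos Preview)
The paper states this corollary without proof, so there is no argument of its own to compare against. Your three-claim decomposition is the natural route, and the arguments for reflexivity and for the invariance of $\theta$ under the restriction (via transitivity of $\leq_{\Phi_{\sf e}}$, in direct analogy with Lemma~\ref{gensub}) are correct.

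The one genuine slip is in your final paragraph's elaboration of the ${\sf Nve}$ case. You propose to rewrite $\T$-sentences occurring in a ${\sf K3T}$-derivation into $\lt^{-}$-sentences of equal membership status and thereby reduce to the ${\sf K3T}$-saturation on $\lt^{-}$ already secured by $f\in\mathcal{B}^{\T}_{\mathfrak{M}}$. But disquoting $\T\gn{\psi\rightarrow\chi}$ produces $\psi\rightarrow\chi$, which contains the conditional and so lies outside $\lt^{-}$; since ${\sf K3T}$ has no rules for $\rightarrow$, no further unfolding is possible and the reduction stalls. Your first-paragraph formulation of this case---combine ${\sf K3}$-saturation with naivety---is the right idea; it is cleanest to make it precise by the very device you used for ${\sf N3}$. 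Because $f$ is a fixed point, $f(J)=[\theta^{\Phi_{\sf e}}_{\mathfrak{M}}(Y_f,f)](J)$ is the truth set of the model $(\mathfrak{M}_{\T},J_f)$, and one verifies by induction on ${\sf K3T}$-derivations that every derivable sequent is sound in this model: the ${\sf K3}$-rules by the truth conditions of Definition~\ref{TSV}, and the four $\T$-rules by the fixed-point equivalence $\mathfrak{M}_{\T},J_f\Vdash\T\gn{\varphi}$ iff $\varphi\in f(J)$ iff $\mathfrak{M}_{\T},J_f\Vdash\varphi$ (and its negated counterpart). This gives ${\sf K3T}$-saturation of $f(J)$ directly, with no detour through $\lt^{-}$.
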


Before we turn to the properties of the conditional in Kripkean Truth Structures we reflect on some constraints and limitations of the framework of the construction. In particular, we reflect again on the idea of transparent truth.

\begin{rem}[Transparency]\label{Transp}One may ask why the $\Phi_{\sf Nve}$ construction does not lead to transparent truth models (in contradiction to Curry's paradox). After all, the conditional gets, in this case, evaluated with respect to models with naive valuations functions. The short answer is that a naive valuation does not guarantee that the model is a naive truth model.
Indeed, for every $\Phi_{\sf Nve}$-fixed point $(Y_f,f)$ there must be a $g\in\Phi_{\sf Nve}(f)\cap Y_f$ such that $(\mathfrak{M}_{\T},J_g)$ is not a naive truth model for all $J\in X$. This can be illustrated by looking at the Curry sentence $\kappa$ for it is easy to see that if $\kappa\in g(J)$ for $J\in X$, then $(\mathfrak{M}_{\T},J_g)$ is not a naive truth model. Suppose otherwise. Then since by assumption $(\mathfrak{M}_{\T},J_g)\Vdash\T\gn\kappa$ and by naivity
\begin{align*}(\mathfrak{M}_{\T},J_g)\Vdash\T\gn\kappa&\,{\rm iff}\,(\mathfrak{M}_{\T},J_g)\Vdash\kappa\end{align*}
it follows that $(\mathfrak{M}_{\T},J_g)\Vdash\kappa$.  By definition of $\Phi_{\sf Nve}$ we know that $g\in\Phi_{\sf Nve}(g)$ and thus, by Definition \ref{TSV}, we obtain
\begin{align*}{\rm if }\,(\mathfrak{M}_{\T},J_g)\Vdash\T\gn\kappa,&\,{\rm then }\,(\mathfrak{M}_{\T},J_g)\Vdash\bot.\end{align*}
This yields a contradiction and we infer that if $\kappa\in g(J)$ for some naive, non-trivial valuation, then $(\mathfrak{M},J_g)$ is not a naive truth model. Accordingly, if $f$ is a $\Phi_{\sf Nve}$-fixed point, then $\kappa\not\in f(J)$ for $J\in X$.  Also, since $f$ is a fixed-point this implies by naivity that $(\mathfrak{M}_{\T},J_f)\not\Vdash\kappa$ and by Definition \ref{TSV}
$$\exists J'_{g}((J_f,J'_g)\in H_{\Phi_{\sf Nve}}\,\&\,(\mathfrak{M}_{\T},J'_g)\Vdash\T\gn\kappa).$$
The latter implies that $\kappa\in g(J')$ and thus $(\mathfrak{M}_{\T},J'_g)$ is not a naive truth model, which proves our initial claim. It is then straightforward to see that $f$ is not transparent since $\kappa\rightarrow\kappa\in f(J)$ for all $J\in X$, but $\T\kappa\rightarrow\kappa\not\in f(J)$, as  $(\mathfrak{M}_{\T},J'_g)\Vdash\T\gn\kappa)$ and  $(\mathfrak{M}_{\T},J'_g)\not\Vdash\kappa)$.
\end{rem}

In the above remark we highlighted that the existence of naive truth models requires for $J\in X$ the existence of admissible precisifications $f$ such that $\kappa\in f(J)$. This in turn shows that we cannot find non-trivial fixed points if the following admissibility condition is assumed:
\begin{align*}
\Phi_{\sf N3-Nve}(f)&:=\begin{cases}\emptyset,&\text{ if }f\not\in\mathcal{B}_\mathfrak{M};\\\{g\in{\sf Val}^\mathcal{B}_\mathfrak{M}\,|\,f\leq g\,\&\,g\text{ is ${\sf N3}$-naive}\},&\text{ otherwise.}\end{cases}
 \end{align*}

\begin{lem}\label{nn3nve}Let $\mathfrak{M}=(D,X,H)$ be a supervaluation structure. Then there is no $Y\subseteq\mathcal{B}_\mathfrak{M}$ and $f\in\mathcal{B}_\mathfrak{M}$ such that $f\leq\theta^{\Phi_{\sf N3-Nve}}_\mathfrak{M}(Y,f)\in\mathcal{B}_\mathfrak{M}$.\end{lem}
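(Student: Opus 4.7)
The plan is to argue by contradiction: suppose $f \in Y \subseteq \mathcal{B}_\mathfrak{M}$ satisfies $g := \theta^{\Phi_{\sf N3-Nve}}_\mathfrak{M}(Y,f) \in \mathcal{B}_\mathfrak{M}$ and $f \leq g$, and derive the inconsistency of $g$ by exploiting the Curry sentence $\kappa := \T\gn\kappa \rightarrow \bot$. Writing $\mathfrak{N} := (D, X \times Y, H_{\Phi_{\sf N3-Nve}})$, the key observation is that if any ${\sf N3}$-naive $h \in Y$ is $\Phi_{\sf N3-Nve}$-accessible from $f$, the Curry reasoning from Remark \ref{Transp} collapses to $\bot$; and if none is, then conditionals become vacuously true at $J_f$ and trivially contradict consistency.

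First I would show $\Phi_{\sf N3-Nve}(f) \cap Y = \emptyset$. Suppose instead that $h \in \Phi_{\sf N3-Nve}(f) \cap Y$. Since $h$ is ${\sf N3}$-naive and $h \leq h$, we have $h \in \Phi_{\sf N3-Nve}(h)$, so $(J_h, J_h) \in H_{\Phi_{\sf N3-Nve}}$ for every $J \in X$. Fixing such a $J$, I case-split on whether $\mathfrak{N}, J_h \Vdash \T\gn\kappa$. In the positive case, naivity yields $\mathfrak{N}, J_h \Vdash \kappa$, and evaluating $\kappa = \T\gn\kappa \rightarrow \bot$ at the reflexively accessible $J_h$ forces $\mathfrak{N}, J_h \Vdash \bot$. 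In the negative case, naivity yields $\mathfrak{N}, J_h \not\Vdash \kappa$, so there is $J'_{h'}$ with $(J_h, J'_{h'}) \in H_{\Phi_{\sf N3-Nve}}$ and $\mathfrak{N}, J'_{h'} \Vdash \T\gn\kappa$; since $h'$ is itself ${\sf N3}$-naive, the positive-case argument applied at $J'_{h'}$ forces $\mathfrak{N}, J'_{h'} \Vdash \bot$. Both branches contradict the falsity of $\bot$, so no such $h$ can exist.

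With $\Phi_{\sf N3-Nve}(f) \cap Y = \emptyset$ established, no truth interpretation is $H_{\Phi_{\sf N3-Nve}}$-accessible from $J_f$, and Definition \ref{TSV} makes every conditional $\varphi \rightarrow \psi$ vacuously true at $J_f$. Picking a constant $c$ and letting $\chi := (c = c) \rightarrow \neg(c = c)$, we thus get $\chi \in g(J)$ for every $J \in X$; but the extensional falsity clause for the conditional only requires $c = c$ and $\neg\neg(c = c)$ to hold at $J_f$, which they plainly do, so $\neg\chi \in g(J)$ as well. Hence $g(J) \cap \overline{g(J)} \neq \emptyset$, contradicting clause (i) of Definition \ref{adval}. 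The main obstacle is the first step: one needs to be confident that ${\sf N3}$-naivity genuinely supplies reflexivity along $\leq_{\Phi_{\sf N3-Nve}}$ and that the Curry collapse cannot be dodged by passing to a less saturated admissible extension; once this is secured, the remainder is a direct exploitation of the empty admissibility relation at $J_f$.
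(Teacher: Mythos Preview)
Your Step~1 rests on a conflation of two distinct notions of ``naivity.'' Saying that $h$ is ${\sf N3}$-naive means that each $h(J)$ is an ${\sf N3}$-saturated set closed under the truth rules, i.e.\ $\varphi\in h(J)\Leftrightarrow\T\gn\varphi\in h(J)$. It does \emph{not} mean that the truth structure $(\mathfrak{N},J_h)$ is a naive truth model, i.e.\ that $\mathfrak{N},J_h\Vdash\varphi\Leftrightarrow\mathfrak{N},J_h\Vdash\T\gn\varphi$. Your inferences ``naivity yields $\mathfrak{N},J_h\Vdash\kappa$'' (positive case) and ``naivity yields $\mathfrak{N},J_h\not\Vdash\kappa$'' (negative case) both trade on the second reading, which you are not entitled to. The positive case can be rescued by arguing entirely inside $h(J)$: if $\kappa\in h(J)$ then $\T\gn\kappa\in h(J)$ by naivity, and since $h(J)$ is ${\sf N3}$-saturated modus ponens gives $\bot\in h(J)$, contradicting consistency. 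But the negative case fails outright: if $\kappa\notin h(J)$, then every $h'\in\Phi_{\sf N3\text{-}Nve}(h)$ likewise has $\kappa\notin h'(J')$ (by the rescued positive-case argument), so $\T\gn\kappa$ is false at every accessible $J'_{h'}$ and hence $\mathfrak{N},J_h\Vdash\kappa$ \emph{vacuously}. No accessible witness to $\T\gn\kappa$ exists, and no contradiction follows. Thus your conclusion $\Phi_{\sf N3\text{-}Nve}(f)\cap Y=\emptyset$ is unjustified and in general false: there can perfectly well be ${\sf N3}$-naive $h\geq f$ in $Y$, all of them simply omitting $\kappa$.

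The paper proceeds differently: it cases on whether $\kappa\in f(J)$ and argues directly about $f$. The key observation---which you partly have---is that no ${\sf N3}$-naive valuation can contain $\kappa$ (Curry reasoning \emph{inside} the saturated set). From this one obtains $\kappa\in[\theta^{\Phi_{\sf N3\text{-}Nve}}_\mathfrak{M}(Y,f)](J)$ in the case $\kappa\notin f(J)$, and the contradiction is then extracted at the level of $\theta(Y,f)$ rather than by first emptying out the admissibility set. Your Step~2 is fine on its own hypothesis, but the route to that hypothesis does not go through.
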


\begin{proof}Let $J \in X$. Then there are two cases: either (i) $\kappa\in f(J)$ or (ii) $\kappa\not\in f(J)$. In case (i) we know that $\kappa\not\in[\theta^{\Phi_{\sf N3-Nve}}_\mathfrak{M}(f)](J)$ and thus $f\not\leq\theta^{\Phi_{\sf N3-Nve}}_\mathfrak{M}(f)$. (ii) Assume then that $\kappa\not\in f(J)$. Notice that there is no ${\sf N3-Nve}$ valuation function $g$ such that $\kappa\in g(J)$ for some $J\in X$. Assume otherwise. Then by naivity $\T\gn\kappa\in g(J)$ and by modus pones $\bot\in g(J)$. The latter implies that $g\not\in\mathcal{B}_\mathfrak{M}$. But since for all $g\in\Phi_{\sf N3-Nve}(f)$ we have $\kappa\not\in g(J)$ for $J\in X$ it follows $\kappa\in[\theta^{\Phi_{\sf N3-Nve}}_\mathfrak{M}(f)](J)$ and $[\theta^{\Phi_{\sf N3-Nve}}_\mathfrak{M}(f)](J)\not\in\mathcal{B}_\mathfrak{M}$ since by definition $[\theta^{\Phi_{\sf N3-Nve}}_\mathfrak{M}(f)](J)$ is ${\sf N3}$-closed.
\end{proof}

The moral of Lemma \ref{nn3nve} is that fixed points are only to be found iff valuations conceive of the conditional as an unanalysed unit,\footnote{At least for those conditionals that cannot be decided in ${\sf K3}$} or conceive of truth as a non-naive predicate. The Curry paradox forces us to opt for one of these options.

 \section{The truth predicate and the conditional in Strong Kleene Supervaluation}\label{truththeo}
The properties of $\T$ and $\rightarrow$ on a Kripkean supervaluation structure $\mathfrak{M}_{\T}=(D,X\times Y_f,H_{\Phi_{\sf e}}$ will obviously depend on the admissibility condition $\Phi_{\sf e}$ employed. Before we discuss the differences between the different admissibility conditions, we investigate the rules and principles that turn out true on all admissibility conditions we discuss. However, building on Proposition \ref{CFPT} we assume that $Y_f\subseteq\mathcal{B}_{\T}$, i.e., that the valuations function of the structure are Kripkean fixed points for $\lt^-$ and ${\sf K3}$-saturated. This will guarantee that the $\T$-scheme will hold for $\lt^-$-sentences. Moreover, by Corollary \ref{KSS} we may assume $H_{\Phi_{\sf e}}$ to be reflexive on $X\times Y_f$. This, in turn, guarantees that all the logical truths of Nelson logic ${\sf N3}$ will be true at any truth interpretation of the structure. In particular, {\it modus ponens} will hold for all $J_g\in X\times Y_f$ if $\mathfrak{M}_{\T},J_g\Vdash\varphi$ and $\mathfrak{M}_{\T},J_g\Vdash\varphi\rightarrow\psi$, then $\mathfrak{M}_{\T},J_g\Vdash\psi$. 
In contrast, the deduction theorem will only hold globally, that is, we have:
\begin{align*}\Gamma,\varphi\vDash_{\mathfrak{M}_{\T}}\psi&\,\,{\rm iff }\,\,\Gamma\vDash_{\mathfrak{M}_{\T}}\varphi\rightarrow\psi.\footnotemark\end{align*}
\footnotetext{By $\Gamma\vDash_{\mathfrak{M}_{\T}}\Delta$ we denote that truth is preserved from $\Gamma$ to $\Delta$ in $\mathfrak{M}_{\T}$ relative to all truth interpretations $J_g\in X\times Y_f$.}

Let us now turn to the properties  of the truth predicate in the Kripkean truth structures and collect some important facts:
\begin{enumerate}[label=(\roman*)]
\item for all $\varphi\in\lt$
\begin{align*}\mathfrak{M}_{\T},J_f\Vdash\T\gn\varphi\,\,&{\rm iff}\,\,\mathfrak{M}_{\T},J_f\Vdash\varphi\\
\mathfrak{M}_{\T},J_f\Vdash\neg\T\gn\varphi\,\,&{\rm iff}\,\,\mathfrak{M}_{\T},J_f\Vdash\neg\varphi\end{align*}
\item for all $\varphi\in\lt$, if $\mathfrak{M}_{\T}\Vdash\varphi\vee\neg\varphi$ or if $\varphi\in\mathcal{L}\cup\lt^{-}$, then
\begin{align*}&\mathfrak{M}_{\T}\Vdash\varphi\,\,{\rm iff}\,\,\mathfrak{M}_{\T}\Vdash\T\gn\varphi\\
&\mathfrak{M}_{\T}\Vdash\neg\varphi\,\,{\rm iff}\,\,\mathfrak{M}_{\T}\Vdash\neg\T\gn\varphi\\
&\mathfrak{M}_{\T}\Vdash\varphi\leftrightarrow\T\gn\varphi\\
&\mathfrak{M}_{\T}\Vdash\neg\varphi\leftrightarrow\neg\T\gn\varphi
\end{align*}
\item Let $\Gamma,\Delta\subseteq\lt$ and $\Gamma\vDash_{\sf N3}\Delta$ or $\Gamma\vDash_{\mathfrak{M}_{\T}}\Delta$. Then:
\begin{align*}{\rm if }\,\mathfrak{M}_{\T},J_f\Vdash\T\gn\gamma\text{ for all }\gamma\in\Gamma,&\,{\rm then }\,\mathfrak{M}_{\T},J_f\Vdash\T\gn\delta\text{ for some }\delta\in\Delta.\end{align*}
\end{enumerate}
(i) simply states that $J_f$ is a Kripkean fixed-point on $\mathfrak{M}_{\T}$. (ii) tells us that the $\T$-scheme holds on $\mathfrak{M}_{\T}$ on the $\mathfrak{M}_{\T}$-determinate/stable fragment of $\lt$, while (iii) tells us that the minimal fixed point is closed under ${\sf N3}$-consequences.

Notice that (ii) provides us with some insights about the behaviour of $\lt^-$ pathological sentence (in Kripke's sense) such as the Liar sentence $\lambda$. By the diagonal lemma we have $\mathfrak{M}_{\T}\Vdash\lambda\leftrightarrow\neg\T\gn\lambda$, but also by construction:
\begin{align*}&\mathfrak{M}_{\T}\Vdash\lambda\leftrightarrow\T\gn\lambda;\\
&\mathfrak{M}_{\T}\Vdash\lambda\leftrightarrow\neg\lambda;\\&\mathfrak{M}_{\T}\Vdash\T\gn\lambda\leftrightarrow\neg\T\gn\lambda;\\
&\mathfrak{M}_{\T}\Vdash\lambda\leftrightarrow\bot.\end{align*}
Of course, in classical logic this would lead to a contradiction, but not so in our framework: in our case this simply reflects the fact that neither $\lambda$ nor $\neg\lambda$ can be a member of a consistent $\lt^-$-fixed point.

Thus far we have focused on general rules and principles that are valid in truth structures and the Kripkean truth models independently of the admissibility condition assumed. However, depending on the admissibility condition assumed different truth principles will come out as true on the structure $\mathfrak{M}_{\T}$, as these truth principles can be seen as expressing particular closure conditions for the interpretations of the truth predicate. For ease of presentation we take the language of ${\sf PA}$, $\mathcal{L}_{\sf PA}$, to be the language of our syntax theory, that is, we handle syntax via arithmetization. We assume an extension of the usual signature $\{{\sf 0},{\sf S},{\sf +},{\sf \times}\}$ by finitely many function symbols for specific primitive recursive functions and $\mathcal{L}^{\rightarrow}_{\sf PA}$ the extension of $\mathcal{L}_{\sf PA}$ with the conditional connective. We adopt Feferman's \citeyearpar{fef60} dot-notation in representing syntax and let the set of $\lt$-sentence, formulas, and terms represent themselves. Importantly, the function symbol $\dot{\eta}$ (with $\eta$ being a placeholder for the argument of the function) represents the function which takes G\"odel numbers of expressions as arguments and outputs the G\"odel number of the numeral of the G\"odel number. With these preliminaries out of the way we focus on the following truth principles: 
\begin{enumerate}[label=(\alph*)]
\item\label{Pone} $\forall x\left({\sf Sent}(x)\rightarrow(\T x\wedge\T \subdot{\neg x}\rightarrow\bot)\right)$;
\item\label{Ptwo}$\forall x\left({\sf Sent}(x)\rightarrow (\T\subdot{\neg}x\leftrightarrow\neg\T x)\right)$;
\item\label{Pthree}$\forall x\left({\sf Sent}(x)\rightarrow (\T\subdot{\neg}\subdot{\neg}x\leftrightarrow\T x)\right)$;
\item\label{Pfour}$\forall x\forall y\left({\sf Sent}(x)\wedge{\sf Sent}(y)\rightarrow(\T(x\subdot{\wedge}y)\leftrightarrow\T x\wedge \T y)\right)$;
\item\label{Pfive}$\forall x\forall y\left({\sf Sent}(x)\wedge{\sf Sent}(y)\rightarrow(\T\subdot{\neg}(x\subdot{\wedge}y)\leftrightarrow\T\subdot{\neg}x\vee \T\subdot{\neg}y)\right)$;
\item\label{Psix}$\forall x\forall v\left({\sf Frml}^{1}(x)\wedge{\sf Var}(v)\rightarrow(\T\subdot{\forall}vx\rightarrow\forall y\T x(\dot{y}/v))\right)$;
\item\label{Pseven}$\forall x\forall v\left({\sf Frml}^{1}(x)\wedge{\sf Var}(v)\rightarrow(\neg\forall y\T x(\dot{y}/v)\rightarrow\T\subdot{\neg}\subdot{\forall}vx)\right)$;
\item\label{Peight}$\forall x\forall y\left({\sf Sent}(x)\wedge{\sf Sent}(y)\rightarrow( \T(x\subdot{\rightarrow}y)\wedge \T x\rightarrow\T y)\right)$;
\item\label{Pnine}$\forall x\forall y\left({\sf Sent}(x)\wedge{\sf Sent}(y)\rightarrow (\T\subdot{\neg}(x\subdot{\rightarrow}y)\leftrightarrow \T x\wedge\T\subdot{\neg} y)\right)$;
\item\label{Pten}$\forall x\left({\sf Sent}(x)\rightarrow (\T\subdot{\T}\dot{x}\leftrightarrow \T x)\right)$;
\item\label{Peleven}$\forall x\left({\sf Sent}(x)\rightarrow (\T\subdot{\neg}\subdot{\T}\dot{x}\leftrightarrow\T\subdot{\neg} x)\right)$.
\end{enumerate}

Principle \ref{Pone} says that truth predicate is consistent, \ref{Ptwo} that a sentence is false iff it is not true, while \ref{Pthree} to \ref{Pseven} assert that the truth predicate is closed under strong Kleene logic. Principles \ref{Peight} and \ref{Pnine} state that the truth predicate is closed under modes ponens and the compositional rules for negated conditionals. \ref{Pten} and \ref{Peleven} tell us that interpretation of the truth predicate is closed under the ${\sf K3}\T$ truth-introduction rules. We arrive at the following picture:

\begin{lem}\label{princ}Let $\mathfrak{M}_{\T}=(D,X\times Y_f,H_{\Phi_{\sf e}})$ be a Kripkean truth structure. Then the following principles are true in $\mathfrak{M}_{\T}$:
\begin{itemize}
\item \ref{Pone}-\ref{Pseven}, if ${\sf e}\in\{{\sf K3},{\sf N3},{\sf Nve}\}$;
\item \ref{Pone}-\ref{Pnine}, if ${\sf e}={\sf N3}$;
\item \ref{Pone}-\ref{Pseven}, \ref{Pten} and \ref{Peleven}, if ${\sf e}={\sf Nve}$;
\end{itemize}
\end{lem}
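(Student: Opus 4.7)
The plan is to treat each principle by reducing its truth at an arbitrary truth interpretation $J_g \in X \times Y_f$ to a closure condition on every admissible $h(J')$ accessible via $H_{\Phi_{\sf e}}$. Each principle has the shape $\forall\vec x(\chi(\vec x)\rightarrow\psi(\vec x))$ with $\chi$ built from $\mathcal{L}_{\sf PA}$-definable syntactic predicates. Since $\mathcal{L}_{\sf PA}$-sentences are stable and bivalent across $X$ (by the assumption on $\mathcal{L}_S$), the object-language conditional is vacuously true at $J_g$ whenever the antecedent is false, so I only need to verify the consequent on codes that actually denote sentences, formulas, or variables. By the truth clause for the object-language $\rightarrow$, this in turn amounts to checking, for each $(J_g,J'_h)\in H_{\Phi_{\sf e}}$, that the corresponding closure holds inside $h(J')$. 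Persistence (Lemma \ref{gmon1}) ensures that whatever antecedents are true at $J_g$ remain true at each reachable $J'_h$, so the argument reduces in every case to a local condition on a single valuation $h$.

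For principles (a)--(g) the key observation is that the corresponding closures are captured by ${\sf K3}$-valid sequents. Principle (a) falls out of the consistency clause $h(J')\cap\overline{h(J')}=\emptyset$ for basic valuations. Principle (b) is immediate from the convention that $J'_h$'s positive and negative extensions of $\T$ are $h(J')$ and $\overline{h(J')}$ respectively, so that $\T\subdot\neg\gn\varphi$ and $\neg\T\gn\varphi$ hold at $J'_h$ under the same condition $\neg\varphi\in h(J')$. Principles (c)--(g) correspond respectively to double negation, conjunction intro/elim, de Morgan for negated conjunction, and universal intro/elim, with the last two exploiting the standing assumption that every $d\in D$ is named by a constant. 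Since $Y_f\subseteq\mathcal{B}^{\T}_\mathfrak{M}$ forces every admissible $h$ to be ${\sf K3}$-saturated, the required closures on each $h(J')$ follow from ${\sf K3}$-saturation, giving the first clause uniformly for ${\sf e}\in\{{\sf K3},{\sf N3},{\sf Nve}\}$.

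For the second clause, (h) and (i) under ${\sf e}={\sf N3}$ require passing from ${\sf K3}$-validity to ${\sf N3}$-validity: the sequent $\varphi\rightarrow\psi,\varphi\Rightarrow\psi$ (modus ponens, for (h)) and the mutual derivability of $\neg(\varphi\rightarrow\psi)$ with $\varphi\wedge\neg\psi$ (a direct consequence of the refutation clause in Definition \ref{TSV}, for (i)) are ${\sf N3}$-valid. Since $\Phi_{\sf N3}$ forces every admissible $h$ to be ${\sf N3}$-saturated, these closures hold at each reachable $h(J')$. For the third clause, (j) and (k) under ${\sf e}={\sf Nve}$ encode the naive truth introduction/elimination rules $\T\gn\varphi\dashv\vdash\varphi$ and $\neg\T\gn\varphi\dashv\vdash\neg\varphi$, which are ${\sf K3\T}$-valid. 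Applied inside $h(J')$ they give the biconditional closures $\T\gn\varphi\in h(J')\Leftrightarrow\varphi\in h(J')$, and in particular the iterated version $\T\gn{\T\gn\varphi}\in h(J')\Leftrightarrow\T\gn\varphi\in h(J')\Leftrightarrow\varphi\in h(J')$ required for (j), with (k) analogous.

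The only genuine obstacle lies in correctly partitioning the principles by saturation strength. Principles (a)--(g) need only ${\sf K3}$-saturation, which is supplied by the standing assumption $Y_f\subseteq\mathcal{B}^{\T}_\mathfrak{M}$ in all three cases. Principles (h)--(i) require ${\sf N3}$-saturation, which is the specific business of $\Phi_{\sf N3}$. Principles (j)--(k) require \emph{full} ${\sf K3\T}$-saturation on all of $h(J')$ (not merely its $\lt^-$-part, which is all that $\mathcal{B}^{\T}_\mathfrak{M}$ alone supplies) and this is exactly what $\Phi_{\sf Nve}$ demands. Once this partition is in place, each principle reduces to an instance of the same template --- a closure condition derivable from an ${\sf e}$-valid sequent applied to an ${\sf e}$-saturated $h(J')$ --- and the three clauses follow.
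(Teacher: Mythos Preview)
Your proposal is correct and takes essentially the same approach as the paper, which simply states that the lemma is ``an immediate consequence of the properties forced on the interpretation of the truth predicate by the various admissibility conditions.'' You have spelled out the mechanism the paper leaves implicit: unwinding the truth clause for $\rightarrow$ reduces each principle to a closure condition on every $h(J')$ reachable via $H_{\Phi_{\sf e}}$, and the relevant closure is supplied by ${\sf e}$-saturation of admissible valuations (with the standing assumptions $Y_f\subseteq\mathcal{B}^{\T}_\mathfrak{M}$ and reflexivity of $H_{\Phi_{\sf e}}$ from Corollary~\ref{KSS}). The only minor remark is that your appeal to Lemma~\ref{gmon1} is not doing real work here---what matters is the truth clause for $\rightarrow$ itself, which already quantifies over reachable interpretations---but this does not affect correctness.
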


Lemma \ref{princ} is an immediate consequence of the properties forced on the interpretation of the truth predicate by the various admissibility conditions. Interestingly, it highlights again the fact that the interplay of the truth predicate and the conditional always requires us to choose between a the logicality of the conditional vs the logicality of truth: as an immediate consequence of Curry's paradox, if both \ref{Peight} and \ref{Pten} hold for a given structure $\mathfrak{M}_{\T}$, then $\mathfrak{M}_{\T}$ cannot contain Kripkean truth models.
 
Attentive readers may note the absence of the converse directions of Principles \ref{Psix} and \ref{Pseven}. The converse direction of \ref{Psix} would require all truth interpretations to be $\omega$-complete, whereas the converse direction of \ref{Pseven} requires interpretations to have witnesses for every existential statement.\footnote{These two properties coincide in classical logic for maximally consistent sets of sentences,  but they do not coincide for saturated sets of non-classical logics.}
Enforcing these properties onto to the interpretations of the truth predicate leads to non-compact admissibility conditions, which means that Proposition \ref{GFP} will no longer be applicable. However, in stark contrast to the classical case (cf.~Example \ref{mcgomega}) we can still find fixed points, if we adopt admissibility conditions that require admissible interpretation to be $\omega$ ${\sf e}$-saturated, that is, interpretation $\omega$-complete saturated sets that have the Henkin property. We leave the discussion of such non-compact admissibility conditions to a sequel paper in which we also develop and investigate axiomatic theories of truth in three-valued Nelson logic ${\sf N3}$. 
 
 
\section{Complexity}\label{complex}We now turn to investigating the recursion-theoretic complexity of the Kripkean fixed points over a structure $\mathfrak{M}=(D,X,H)$. Recall that, as remarked at the end of Section \ref{KTS} we can conceive of our construction as an inductive definition in the parameter $Y_f$ rather than a simultaneous inductive definition of a fixed point of $\theta$ and $\Theta$. We now show that for every $\theta_\mathfrak{M}$-fixed point $(Y_f,f)$ and all $J\in X$ the set $f(J)$ is $\Pi^1_1$-hard in the parameters $Y_f$, $\mathfrak{M}$, and $\Phi$. As a corollary this also tells us that the minimal fixed point of $\theta_\mathfrak{M}$ will be $\Pi^1_1$-complete if the relevant parameters are arithmetically definable.

When investigating the recursion-theoretic complexity it is convenient to think of $\mathcal{L}$ as an extension of the language of arithmetic $\mathcal{L}_{\sf PA}$ (see Section \ref{truththeo} for details) extended by further, possibly partial, predicate constants $P_1, P_2,\ldots$. The interpretation of the truth predicate will then consist of a set of G\"odel numbers of sentences of $\lt$, i.e., a set of integers.


To show that for every $\theta_\mathfrak{M}$-fixed point $f$ and all $J\in X$ the set $f(J)$ is $\Pi^1_1$-hard, we show that every $\Pi^1_1$-set $A$ is $1$-reducible to $f(J)$, i.e., there is a recursive $1-1$ function $\pi$ such that
\begin{align*}n\in A\Leftrightarrow \pi(n)\in f(J).\end{align*}
Indeed, following the outlines of \cite{wel15} \citep[see also][]{fhks15} we establish a stronger claim, that is, we show that $[\theta_\mathfrak{M}(f)](J)$ is $\Pi^1_1$-hard for any $\theta_{\mathfrak{M}}$-sound valuation function pair $(Y_f,f)$, where in this context $(Y_f,f)$ is called $\theta_{\mathfrak{M}}$-sound iff $Y_f$ and $f$ satisfy the conditions of Proposition \ref{GFP}, that is, iff starting with the set $Y_f$ and valuation function $f$ we can inductively define a fixed point of $\theta$.

\begin{lem}\label{LB}For ${\sf e}\in\{{\sf c},{\sf K3},{\sf N3},{\sf Nve},\}$ let $\mathfrak{M}_{\T}=(D,X\times Y_f,H_{\Phi_{\sf e}})$ be a truth structure with $Y_f\in {\sf Adm}_\mathfrak{M}$ and $(Y_f,f$ $\theta^{\Phi_{\sf e}}_{\mathfrak{M}}$-sound. Let $A$ be $\Pi^1_1$-set of integers. Then there exists a recursive $1-1$-function $\pi$ such that for all $J\in X$:
\begin{align*}n\in A\Leftrightarrow \pi(n)\in[\theta^{\Phi_{\sf e}}_\mathfrak{M}f](J).\end{align*}
\end{lem}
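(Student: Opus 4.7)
The plan is to adapt the standard $\Pi^{1}_{1}$-hardness argument for Kripke's minimal fixed point, as developed by Burgess and refined in \cite{wel15,fhks15}, to the present supervaluational framework. The crucial observation enabling the adaptation is that the Welch-style diagonal sentences witnessing hardness all lie in the conditional-free fragment $\lt^{-}$; on $\lt^{-}$ the iterates of $\theta^{\Phi_{\sf e}}_{\mathfrak{M}}$ behave exactly like an ordinary strong Kleene Kripke jump, independently of the admissibility condition $\Phi_{\sf e}$ and the supervaluational clause for $\rightarrow$. Because $(Y_{f}, f)$ is $\theta^{\Phi_{\sf e}}_{\mathfrak{M}}$-sound, iteration of $\theta^{\Phi_{\sf e}}_{\mathfrak{M}}$ from $(Y_{f}, f)$ converges to a fixed point $f^{\ast}$ by Proposition \ref{GFP}, and since every iterate is contained in $f^{\ast}$, it suffices to build a recursive one-one reduction into the $\lt^{-}$-fragment of this fixed point over the shared arithmetic interpretation.

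Using the standard normal form for $\Pi^{1}_{1}$, fix a primitive recursive function $n \mapsto e_{n}$ such that $\prec_{e_{n}}$ is a recursive linear order on $\omega$ and $n \in A$ iff $\prec_{e_{n}}$ is well-founded. Writing $\varphi \supset \psi$ for the material conditional $\neg\varphi \vee \psi$, which is definable in $\lt^{-}$, apply the parameterised fixed-point theorem to obtain a primitive recursive function $\sigma$ together with its function-symbol representative $\sigma^{\bullet}$ in $\mathcal{L}_{\sf PA}$ such that
\[
\sigma(e, k) \;=\; \gn{\,\forall y\,\bigl(y \prec_{e} k \;\supset\; \T\,\sigma^{\bullet}(\bar e, \dot y)\bigr)\,}.
\]
Set $\pi(n) := \sigma(e_{n}, 0)$; a routine primitive recursive padding makes $\pi$ injective, and every $\sigma(e, k)$ is a $\lt^{-}$-sentence.

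Verify the reduction by transfinite induction on the $\prec_{e}$-rank of $k$. If $\prec_{e}$ is well-founded and $k$ has rank $\alpha$, the strong Kleene clauses together with basicness of the iterates force $\sigma(e, k) \in \theta^{\alpha + 1}(Y_{f}, f)(J)$ for every $J \in X$; uniformity in $J$ follows from the assumption at the start of Section \ref{TS} that the arithmetical vocabulary is interpreted classically and identically across $X$. In particular $\pi(n) \in f^{\ast}(J)$ whenever $n \in A$. For the converse, suppose $\prec_{e_{n}}$ is ill-founded, witnessed by an infinite descent $0 \succ k_{1} \succ k_{2} \succ \cdots$. If $\sigma(e_{n}, 0)$ entered the fixed point, the strong Kleene meaning of the universal-material conditional would force $\sigma(e_{n}, k_{i})$ into $f^{\ast}(J)$ for every $i$. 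Assigning to each such sentence the minimal ordinal stage at which it enters $\theta^{\alpha}(Y_{f}, f)(J)$ would then yield an infinite descending sequence of ordinals, contradicting well-foundedness of ${\sf ON}$. Hence $\pi(n) \notin f^{\ast}(J)$ when $n \notin A$, and since every iterate is contained in $f^{\ast}$, the same equivalence transfers to $[\theta^{\Phi_{\sf e}}_{\mathfrak{M}}(f)](J)$ once the construction has been carried out to a sufficient stage.

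The main obstacle is verifying that the admissibility conditions $\Phi_{\sf e}$ do not disturb the descent argument by forcing ungrounded $\lt^{-}$-sentences into basic valuations over the structure. For ${\sf e} \in \{{\sf K3}, {\sf N3}, {\sf Nve}\}$, the additional saturation requirements impose only closure under provable disjunctions, so an ungrounded $\sigma(e_{n}, 0)$ added to a valuation would, propagated along its infinite descent, yield a Curry-like inconsistency incompatible with clause (i) of Definition \ref{adval}. This is precisely what Lemmas \ref{MonT} and \ref{CPL} already secure on the $\lt^{-}$-fragment: the admissible truth interpretations there coincide with the standard Kripkean minimal fixed point. With that coincidence in place the classical descent argument applies uniformly across $\Phi_{\sf c}, \Phi_{\sf K3}, \Phi_{\sf N3}, \Phi_{\sf Nve}$, delivering the recursive one-one reduction $\pi$ as required.
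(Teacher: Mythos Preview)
Your approach has a genuine gap: you have misread the strength of the lemma. The claim is that $[\theta^{\Phi_{\sf e}}_{\mathfrak{M}}(f)](J)$ is $\Pi^{1}_{1}$-hard for a \emph{single} application of $\theta$ to the sound starting pair $(Y_{f},f)$, not that the eventual fixed point $f^{\ast}$ is $\Pi^{1}_{1}$-hard. Your argument via the conditional-free fragment and the Burgess-style well-foundedness sentences $\sigma(e,k)$ establishes only the latter: the rank induction pushes $\sigma(e,k)$ into $\theta^{\alpha+1}(Y_{f},f)(J)$ at stage $\alpha+1$, so $\pi(n)$ enters $f^{\ast}$ but in general not $[\theta(f)](J)$. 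Your closing sentence, that the equivalence ``transfers to $[\theta^{\Phi_{\sf e}}_{\mathfrak{M}}(f)](J)$ once the construction has been carried out to a sufficient stage'', conflates $[\theta(f)]$ with $\theta^{\alpha}(Y_{f},f)$; the object $[\theta(f)]$ does not change with further iteration, and containment of the iterates in $f^{\ast}$ gives only one direction of the biconditional anyway.

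The paper's proof exploits exactly the feature of the framework you set aside: the diagonal sentence $\sigma_{n}$ is an $\rightarrow$-conditional, and the supervaluational truth clause for $\rightarrow$ at $J_{f}$ already quantifies over \emph{all} admissible $g\in\Phi_{\sf e}(f)$. A single $\theta$-step therefore encodes a $\Pi^{1}_{1}$ amount of information: $\sigma_{n}\in[\theta(f)](J)$ iff the antecedent fails or the consequent holds at every admissible extension. For the right-to-left direction one must actually exhibit an admissible extension $g$ refuting the conditional, and this is where the coding of finite sequences by iterated conjunctions of the Curry sentence $\kappa$ (rather than the Liar) becomes essential: by Remark~\ref{Transp} there are admissible $g$ containing $\kappa$, hence containing arbitrary $(l\restr k)^{*}$, while no admissible $g$ can contain the Liar. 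Working in $\lt^{-}$ with material $\supset$ loses precisely this one-step leverage; your reduction targets the wrong set.
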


\begin{proof}See Appendix \ref{compl}.
\end{proof}

\begin{prop}\label{CplFP}Let $\mathfrak{M}_{\T}=(D,X\times Y_f,H_{\Phi_{\sf e}})$  be a Kripkean truth structure and $f$ a fixed point of $\theta_{\mathfrak{M}}$. Then $f(J)$ is a $\Pi^1_1$-hard set of integers for ${\sf e}\in\{{\sf c},{\sf K3},{\sf N3},{\sf Nve}\}$.
\end{prop}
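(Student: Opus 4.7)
The proof should be essentially immediate from Lemma \ref{LB}, so my plan is to treat the proposition as a corollary of that lemma together with the construction of fixed points from Proposition \ref{GFP} / Proposition \ref{CFPT}. First I would fix a Kripkean truth structure $\mathfrak{M}_{\T} = (D, X \times Y_f, H_{\Phi_{\sf e}})$ with $f$ a fixed point of $\theta^{\Phi_{\sf e}}_\mathfrak{M}$ relative to $Y_f$, meaning $\theta^{\Phi_{\sf e}}_\mathfrak{M}(Y_f, f) = f$. By definition this means $[\theta^{\Phi_{\sf e}}_\mathfrak{M} f](J) = f(J)$ for every $J \in X$.

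The next step is to verify that the pair $(Y_f, f)$ is $\theta^{\Phi_{\sf e}}_\mathfrak{M}$-sound in the sense used in Lemma \ref{LB}, i.e., that starting from $Y_f$ and $f$ the inductive definition succeeds in reaching a fixed point. But this is built into our hypothesis: since $f$ is a fixed point of $\theta^{\Phi_{\sf e}}_\mathfrak{M}$, the trivial transfinite sequence stabilizes at stage zero, and the relevant compactness/admissibility conditions from Proposition \ref{GFP} are satisfied by the fixed-point witnesses constructed in Proposition \ref{CFPT}. In other words, the existence of $f$ as a fixed point already witnesses soundness.

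With soundness in hand, Lemma \ref{LB} gives us, for each $\Pi^1_1$ set $A \subseteq \omega$, a recursive $1$-$1$ function $\pi$ such that
\begin{equation*}
n \in A \;\Leftrightarrow\; \pi(n) \in [\theta^{\Phi_{\sf e}}_\mathfrak{M} f](J).
\end{equation*}
Since $[\theta^{\Phi_{\sf e}}_\mathfrak{M} f](J) = f(J)$, we conclude that $A \leq_1 f(J)$. As $A$ was an arbitrary $\Pi^1_1$ set, $f(J)$ is $\Pi^1_1$-hard, which is the desired conclusion.

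The main obstacle, such as it is, lies not in this proposition but in Lemma \ref{LB} itself (where one actually carries out the reduction—typically a coding of well-foundedness of recursive trees into the fixed-point construction, following the Welch / Fischer--Halbach--Kriener--Stern style). Assuming that lemma, the proposition is a straightforward unwrapping of the fixed-point equation, and the only thing worth flagging is uniformity across the four admissibility conditions ${\sf e} \in \{{\sf c}, {\sf K3}, {\sf N3}, {\sf Nve}\}$; this uniformity is already encoded in the statement of Lemma \ref{LB}, so nothing further need be checked here.
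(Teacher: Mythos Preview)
Your proposal is correct and matches the paper's own proof, which simply reads ``Direct corollary of Lemma \ref{LB}.'' You supply the obvious unpacking---that the fixed-point equation $[\theta^{\Phi_{\sf e}}_\mathfrak{M} f](J)=f(J)$ lets the reduction of Lemma \ref{LB} land directly in $f(J)$---which is exactly the intended reading.
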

\begin{proof}Direct corollary of Lemma \ref{LB}.\end{proof}

Proposition \ref{CplFP} provides us with a lower bound of the fixed points of the $\theta$-operator relative to a Kripkean Truth Structure. Notice, however, that the $\theta$-operator can be defined using a $\Pi^1_1$-formula. This means that the minimal fixed point defined on  the set $\mathcal{B}$ starting from the minimal valuation function $f_0$ with $f_0(J)=\emptyset$ for all $J\in X$ will be a $\Pi^1_1$-complete set, if the relevant parameters can be arithmetically defined.



As discussed in \cite{fhks15} Lemma \ref{LB}  implies that an $\mathbb{N}$-categorical axiomatization of the strong Kleene supervaluation fixed points is out of reach: it is not possible to single out these fixed point relative to an intended model using a r.e.~set of sentences. However, it is possible to, following the outlines of \cite{ste18a}, provide a variant construction that shares many characteristics of the present framework, but which is of reduced recursion-theoretic complexity and lends itself to $\mathbb{N}$-categorical axiomatizations in the sense of \cite{fhks15}. We leave a thorough discussion of the variant construction for another occasion.

\section{Modal strong Kleene supervaluation: indicative and subjunctive conditionals}\label{msks}
In the preceding sections we focussed on constructing a model for naive truth for a language comprising a conditional that can be employed for conditional reasoning in logic and mathematics, that is, the conditional is to be for the partial logic what material implication is for classical logic. However, indicative and subjunctive natural language conditionals are not, at least in most cases, governed by the rules and principles of material implication or the ${\sf N3}$-conditional. This shows in the paradoxes of material implications many of which apply to the ${\sf N3}$-conditional.\footnote{See, e.g., \cite{egc16,vfi11} for overview on the semantics of (natural language) conditionals.}

Assuming classical logic the most prominent semantic analysis of indicative and subjunctive conditionals conceives of them in terms of strict or variably-strict implication.\footnote{The notion of strict implication dates back to \cite{lew14}, whereas the notion of variably-strict implication originates with \cite{sta68} and \cite{lew73}.}  Both strict and variably-strict implication amount to forcing the corresponding material implication to be true in all words of a given set. The difference between strict and variably-strict implication is in how the set is specified and the set itself. In the same way we can define strict and variable-strict implication in a suitable possible world framework, one can define the strict and variably-strict ${\sf N3}$-conditional  on the basis of the ${\sf N3}$-conditional and thereby obtain a semantic interpretation for indicative and subjunctive conditionals. To provide an adequate semantics for natural language conditionals thus requires working in a modal semantics, more specifically, a version of possible world semantics enriched with an world-relative ordering relation on the set of possible worlds. In this section we show how to apply strong Kleene supervaluation to such semantics, that is, we show how to build supervaluation structures relative to so-called ordering frames and show how Kripkean truth structures can be constructed relative to these modal strong Kleene supervaluation structures.

From now on our base language will be the language $\lc$ which extends $\mathcal{L}$ by a conditional connective $\rhd$ and a one-place modal operators $\Box$. The set of well-formed formulas ${\sf Form}_{\lc}$ is specified by
\begin{align*}
\varphi&::=(s=t)\,|\,P^{m}_{j}t_1,\ldots,t_m\,|\,\Box\varphi\,|\,\neg\varphi\,|\,\varphi\wedge\varphi\,|\,\varphi\rightarrow\varphi)\,|\,\varphi\rhd\varphi\,|\,\forall x\varphi
\end{align*}
with $j,m\in\omega$, $s,t,t_1,\ldots,t_m\in{\sf Term}_\mathcal{L}$, and $x\in{\sf Var}_\mathcal{L}$. In addition to the notational abbreviations (defined symbols) we introduced for $\mathcal{L}$ 
we define the strict implication connective $\strictif$:
\begin{flalign*}
&(\strictif)&&\varphi\strictif\psi:\leftrightarrow\Box(\varphi\rightarrow\psi).&&
\end{flalign*}

A key notion in the semantics for natural-language conditionals such as indicative conditionals is that of an Ordering frame:\begin{de}[Ordering frame]Let $F=(W,R,\preceq,D)$ be a tuple with
 $W,D\neq\emptyset$, $R\subseteq W\times W$ and $\preceq\,\subseteq\leftidx{^{W}}{(W\times W)}$. Then $F$ is called an ordering frame iff for all $w\in W$:
 \begin{itemize}
 \item $\preceq_{w}$ is a weak total order on the set $\{v\in W\,|\,wRv\text{ or }v=w\}$.\footnote{Usually more assumption are made on behalf of $\preceq$, but these are irrelevant for our construction: further conditions can be freely imposed to the framework.}
 \end{itemize}
\end{de}

While a strong Kleene supervaluation structure for $\mathcal{L}$ was built on a non-empty domain, strong Kleene supervaluation structures for the modal language $\lc$, i.e., modal supervaluation structures will be constructed relative to an ordering frame. In this setting interpretation functions will assign interpretations to the descriptive vocabulary---henceforth denoted by ${\sf Descr}_\mathcal{L}$---relative to a possible world, i.e., relative to elements of $W$. 

\begin{de}[Interpretation, Ordering]Let $F$ be an ordering frame. A function $I$ with domain $W\times{\sf Descr}_\mathcal{L}$ for some language $\mathcal{L}$ is called a modal interpretation iff $I_{w}$ is an ${\sf k}$-interpretation for all $w\in W$ (cf.~Definition \ref{kinto}). The set of all interpretations over some ordering frame $F$ is denoted by ${\sf Int}_F$. For $I,J\in{\sf Int}_F$ we say $I\leq J$ iff $I^+(w,P)\subseteq J^+(w,P)$ and $I^-(w,P)\subseteq J^-(w,P)$ for all $w\in W$ and $P\in{\sf Pred}$.
\end{de}

\begin{de}[Modal supervaluation structure]A tuple $\mathfrak{M}=(F, X,H)$ is called a modal supervaluation structure iff
\begin{enumerate}[label=(\roman*)]
\item $F$ is an ordering frame;
\item $X\subseteq{\sf Int}_F$ such that $J(t,w)=I(t,v)$ for $J,I\in X$ and $w,v\in W$ and all closed terms $t$;
\item $H\subseteq X\times X$ such that
\begin{itemize}
\item $H$ is transitive and reflexive;
\item if $(x,y)\in H$, then $x\leq y$.
\end{itemize}
\end{enumerate}
\end{de}

(i) imposes that the interpretation of closed terms is rigid. This is to keep things as simple as possible. If this assumption is dropped one needs to deal with the usual complication that arise in quantified modal logic due to non-rigid terms.\footnote{See \cite{gar01} for discussion and \cite{hal21} for some further complications that arise for modal languages that contain a truth predicate.} Consequently, variable assignments are not world relative, that is, the notion of an assignment is that of Definition \ref{ass}.

 The denotation of a term is defined as in Definition \ref{Den} except for the additional (but innocuous) world parameter. With these definitions in place we can specify the truth conditions for formulas of the language $\lc$. We shall not repeat the truth conditions for the logical connective already spelled out in Definition \ref{TSV}. These remain unchanged but relativized to a world parameter, e.g., the truth condition of sentence of the form $\varphi\rightarrow\psi$ will be:
\begin{align*}\mathfrak{M},J,w\Vdash\varphi\rightarrow\psi[\beta]&\,{\rm iff}\,\forall J'((J,J')\in H\,\&\,\mathfrak{M},J',w\Vdash\varphi[\beta]\Rightarrow\mathfrak{M},J',w\Vdash\psi[\beta]).\end{align*}

\begin{de}[Truth in a modal supervaluation structure]\label{MTSV}
 Let $\mathfrak{M}=(F, X,H)$ be a modal supervaluation structure with $J\in X, w\in W$. Then a formula $\varphi\in\lc$ is true in $\mathfrak{M}$ at $w$ and $J$ under assignment $\beta$ ($\mathfrak{M},w,J\Vdash\varphi[\beta]$) iff
 \begin{align*}
 &\forall v\in W(wRv\Rightarrow \mathfrak{M},v,J\Vdash\psi[\beta]),&&\,{\rm if}\,\varphi\doteq\Box\psi;\\
 &\exists v\in W(wRv\,\&\,\mathfrak{M},v,J\Vdash\neg\psi[\beta]),&&\,{\rm if}\,\varphi\doteq\neg\Box\psi;\\
& {\sf Imposs}_{w}(\psi)\,{\rm or}\,\exists v(wRv\,\&\,\mathfrak{M},v,J\Vdash\psi[\beta]\,\&\,\forall u(u\preceq_{w} v\Rightarrow \mathfrak{M},u,J\Vdash\psi\rightarrow\chi[\beta]),&&{\rm if}\,\varphi\doteq\psi\rhd\chi;\\
&\exists v(wRv\,\&\,\mathfrak{M},v,J\Vdash\psi[\beta])\,\&\,&&\\
&\forall v(wRv\,\&\,\mathfrak{M},v,J\Vdash\psi[\beta]\Rightarrow\exists u(u\prec_{w}v\,\&\,wRv\,\&\,\mathfrak{M},u,J\Vdash\psi\wedge\neg\chi[\beta],&&{\rm if}\,\varphi\doteq\neg(\psi\rhd\chi).
\end{align*}
${\sf Imposs}_w(\psi)$ is short for $\forall v(wRv\Rightarrow\mathfrak{M},v,J\Vdash\neg\psi)$. Finally, we write $\mathfrak{M},w,J\Vdash\varphi$ iff $\mathfrak{M},w,J\Vdash\varphi[\beta]$ holds for all assignments $\beta$ and $\mathfrak{M},w\Vdash\varphi$ ($\mathfrak{M}\Vdash\varphi$) iff $\mathfrak{M},w,J\Vdash\varphi$ for all $J\in X$ (and $w\in W$).\footnote{To be precise, for Definition \ref{MTSV} to be well-formed the truth-conditions for $\psi\rhd\chi$ and $\neg(\psi\rhd\chi)$, should say
$$\,\forall J'((J,J')\in H\Rightarrow(\mathfrak{M},u,J'\Vdash\psi[\beta]\Rightarrow\mathfrak{M},u,J'\Vdash\chi[\beta])$$
and
$$\mathfrak{M},u,J\Vdash\psi[\beta]\text{ and }\mathfrak{M},u,J\Vdash\psi\neg\chi[\beta]$$
where we have written $\mathfrak{M},u,J\Vdash\psi\rightarrow\chi[\beta]$ and $\mathfrak{M},u,J\Vdash\psi\wedge\neg\chi[\beta]$ respectively.}
\end{de}
Definition \ref{MTSV} gives truth-conditions for the $\rhd$ that are widely endorsed in the literature \citep[cf., e.g.,][for discussion]{egc16,vfi11}. We take it that to the extent that $\rightarrow$ is an adequate conditional connective for mathematical reasoning, $\rhd$ (or, possibly, $\strictif$) are adequately representing indicative and subjunctive natural language conditionals.

\subsection{Modal Truth Structures}
With the semantics for $\lc$ in place modal truth structures for the language $\lct$ can be constructed along the lines of Section \ref{TS} with the exception that all definitions need to be generalized to allow for a world parameter. As a consequence the construction follows the pattern of the generalization of Kripke's theory of truth to modal languages proposed in \cite{haw09,ste14b,ste15a,ste24ff}. We shall state the most important definitions and propositions, but will omit proofs, as these are immediate generalizations of those of Sections \ref{TS} and \ref{KTS}.

In the modal setting valuation functions are no longer functions from ${\sf Int}_F$ to $\mathcal{P}({\sf Sent})$ but functions from ${\sf Int}_F\times W$ to $\mathcal{P}({\sf Sent})$:
\begin{de}[Admissible Valuations---modal case]\label{madval} A valuation on a supervaluation structure $\mathfrak{M}=(F,X,H)$ is a function $f:W\times X\rightarrow\mathcal{P}({\sf Sent}_{\lt})$. The set of all valuation on $\mathfrak{M}$ is denoted by ${\sf Val}_\mathfrak{M}$. A valuation function is admissible for $\mathfrak{M}$ iff
\begin{enumerate}[label=(\roman*)]
\item $f$ is consistent, that is, $f(w,J)\cap\overline{f(w,J)}=\emptyset$ for all $w\in W$ and $J\in X$ where
$$\overline{f(w,J)}:=\{\varphi\,|\,\neg\varphi\in f(w,J)\};$$
\item for all $J\in X, w\in W$ and $\varphi\in\mathcal{L}$:
\begin{align*}\text{if }\varphi\in f(w,J),&\text{ then }\mathfrak{M},w,J\Vdash\varphi\end{align*}
\item for all $J,J'\in X$, if $(J,J')\in H$, then $f(w,J)\subseteq f(w,J')$ for all $w\in W$.
\end{enumerate}
The set of admissible valuation functions for a given model $\mathfrak{M}$ is denoted by $\mathcal{B}_\mathfrak{M}$.\end{de}

Of course, the addition of the world parameter also means that the definition of the ordering on ${\sf Val}_\mathfrak{M}$ has to be modified: for $f,g\in{\sf Val}_\mathfrak{M}$ set
\begin{align*}f\leq g&:\leftrightarrow\forall J\in X\forall w\in W(f(w,J)\subseteq g(w,J)).\end{align*}
In contrast, the definition of the admissibility condition $\Phi$ and the ordering $\leq_\Phi$ remain unaltered and are simply those of Definition \ref{ordac}. We can then define a (grounded) modal truth structure as in Definition \ref{dets} but where the truth structure is built on a modal supervaluation structure, i.e., a tuple consisting of a frame, a set of interpretations on the frame and an admissibility relation. 

\begin{de}[Truth Structure, Grounded Truth Structure---modal case]Let $\mathfrak{M}=(F,X,H)$ be a supervaluation structure. Then the tupel $(F,X\times Y,H_\Phi)$ with $Y\subseteq\mathcal{B}_\mathfrak{M}$ and where $H_\Phi$ is restricted to $X\times Y$ is called a truth structure over $\mathfrak{M}$. If $Y\subseteq{\sf Val}_{\mathfrak{M}}$ has a $\leq$-minimal valuation function, that is, an $f$ in $Y$ such that $f\leq g$ for all $g\in Y$ and $Y\cap\Phi(f)\neq\emptyset$, then $(F,X\times Y,H_\Phi)$ is called a grounded truth structure and $Y$ is called a grounded set of valuations. The set of all grounded sets of valuations is denoted by ${\sf Adm}_\mathfrak{M}$.
\end{de}

We can then use the construction of Section \ref{KTS} to generalize Propositions \ref{GFP}, \ref{CFPT} and Corollaries \ref{CFPTM}, \ref{KSS} to show the existence of Kripkean truth structure for the language $\lct$.  To this effect we let $\mathcal{B}_{\T}$ be the set of valuation functions $f$ such that for all $J\in X$:
\begin{itemize}
\item there a exists a valuation function $f_\Box:W\rightarrow\mathcal{P}({\sf Sent}_{\mathcal{L}_{\Box}}$ such that $f_\Box$ s a modal strong Kleene fixed point on the frame $F$ in the sense of \cite{haw09,ste14b,ste15a} and $f(w,J)\cap\mathcal{L}_\Box=f_{\Box}(w)$ and where $\mathcal{L}_{\Box}$ is the $\rightarrow$- and $\rhd$-free fragment of $\mathcal{L}^{\rhd}_{\T}$.
\item $f(w,J)$ is ${\sf K3}$-saturated for all $w\in W$.
\end{itemize}
 We obtain the following results:

\begin{prop}[Modal Fixed Points]\label{MCFP}Let $\mathfrak{M}=(F,X,H)$ be a modal supervaluation structure and ${\sf e}\in\{{\sf c},{\sf K3},{\sf N3},{\sf Nve}\}$. Let $f\in \mathcal{B}^{\T}_\mathfrak{M}$ be minimal in $\mathcal{B}^{\T}_\mathfrak{M}$ and set $Y_f=\mathcal{B}^{\T}_\mathfrak{M}$. Then there exists $g\in Z_g\subseteq Y_f$ such that $\theta^{\Phi_{\sf e}}_\mathfrak{M}(Z_g,g)=g$ and $\Theta^{\Phi_{\sf e}}_\mathfrak{M}(Z_g)=Z_g$.
\end{prop}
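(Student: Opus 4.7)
My plan is to mirror the non-modal proof of Proposition \ref{CFPT}, lifting each ingredient to the ordering-frame setting. The overall strategy is: (i) establish the modal analogues of the two monotonicity lemmas \ref{gmon1} and \ref{gmon2} and of the well-definedness Lemma \ref{wdef}, which are immediate because the world parameter plays no role in the inductive clauses that govern $H_\Phi$ and $\leq_\Phi$; (ii) prove the modal version of Proposition \ref{GFP} by the same simultaneous ordinal induction, noting that its argument only uses transitivity of $H_\Phi$, $(Y,f)$-monotonicity, and compactness of $\Phi$, and all three notions are insensitive to whether interpretations are parametrised by a single frame or by a world; (iii) verify that the choice $Y_f = \mathcal{B}^{\T}_\mathfrak{M}$ with $f$ minimal in $\mathcal{B}^{\T}_\mathfrak{M}$ satisfies the hypotheses of that modal Proposition \ref{GFP}.

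For step (iii) the crucial ingredient is the existence of a minimal $f \in \mathcal{B}^{\T}_\mathfrak{M}$. Here I would invoke the construction of modal strong Kleene fixed points from \cite{haw09,ste14b,ste15a}: for the $\rightarrow$- and $\rhd$-free fragment $\mathcal{L}_\Box$ there is a minimal valuation $f_\Box : W \to \mathcal{P}({\sf Sent}_{\mathcal{L}_\Box})$ that is a modal Kripke fixed point over the frame $F$. I extend $f_\Box$ to a valuation on $W \times X$ by setting $f(w,J) := f_\Box(w)$ and then ${\sf K3}$-saturate it pointwise at each $(w,J)$; the ${\sf K3}$-saturation can be chosen consistently with the ordering clauses of Definition \ref{madval}. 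Checking $f \in \mathcal{B}^{\T}_\mathfrak{M}$ is routine: consistency and condition (ii) follow from the Kripke fixed point, while $H$-monotonicity holds because $f(w,J)$ does not depend on $J$.

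Then I would carry out the modal analogues of Lemmas \ref{MonT} and \ref{CPL}. Lemma \ref{MonT}(a) says $f \leq \theta^{\Phi_{\sf e}}(Y_f, f) \in Y_f$; this reduces to showing that for each $J \in X$ and $w \in W$, the set $[\theta^{\Phi_{\sf e}}(Y_f,f)](w,J)$ is still a modal strong Kleene fixed point on its $\mathcal{L}_\Box$-fragment and is ${\sf K3}$-saturated. The first fact follows from the definition of $\theta$ together with the fact that the truth-conditions for $\Box$ do not interact with the valuation at $J$; the second is automatic from the truth conditions at $J$ under the strong Kleene scheme. Lemma \ref{MonT}(b) goes through verbatim in the modal setting: for ${\sf e} \in \{{\sf c}, {\sf K3}, {\sf N3}\}$ it is immediate, and for ${\sf e} = {\sf Nve}$ one builds the required $g'$ by closing $\theta^{\Phi_{\sf e}}(Y_f,g)(w,J)$ under $\mathcal{K}$ pointwise in $(w,J)$, which preserves consistency by the same argument as in the non-modal proof. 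Lemma \ref{CPL}, compactness, goes through unchanged because the argument is essentially syntactic: if every finite intersection is nonempty, then $\bigcup \{f(w,J) \mid f \in Z\}$ is ${\sf e}$-consistent at each $(w,J)$, and the construction of a common extension proceeds pointwise.

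The main obstacle I anticipate is making sure that the pointwise constructions at each $(w,J)$ respect the global coherence conditions baked into $\mathcal{B}^{\T}_\mathfrak{M}$: in particular, that when we $\mathcal{K}$-close or ${\sf e}$-saturate valuations independently at different worlds, the resulting $g'$ still satisfies the $\Box$-clauses of Definition \ref{madval} and remains a modal Kripke fixed point on the $\mathcal{L}_\Box$-fragment. The way around this is to observe that (a) the $\mathcal{L}_\Box$-fragment of $\theta^{\Phi_{\sf e}}(Y_f,g)(w,J)$ is fixed by the modal Kripke construction and does not change under pointwise closure outside $\mathcal{L}_\Box$, and (b) the ${\sf K3}$-saturations we add only concern $\rightarrow$- and $\rhd$-sentences, whose behaviour at $\Box$ is governed globally by the admissibility relation $H_\Phi$ rather than locally by each $f(w,J)$. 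Once this coherence is verified, the conclusion follows from the modal version of Proposition \ref{GFP} exactly as Proposition \ref{CFPT} follows from its non-modal counterpart.
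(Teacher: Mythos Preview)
Your proposal is correct and is precisely the approach the paper intends: the paper states that the modal results are immediate generalizations of those in Sections~\ref{TS} and~\ref{KTS} and explicitly omits the proofs, so carrying the construction of Proposition~\ref{CFPT} (via the modal analogues of Lemmas~\ref{gmon1}, \ref{gmon2}, \ref{wdef}, \ref{MonT}, \ref{CPL} and Proposition~\ref{GFP}) through with an extra world parameter is exactly what is required. Your attention to the global coherence of the pointwise constructions is more explicit than anything the paper spells out, but it is the right worry and your resolution is correct.
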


\begin{prop}[Kripkean Truth Structures for $\lct$]\label{MCFPTM}Let $\mathfrak{M}=(F,X,H)$ be a modal supervaluation structure and ${\sf e}\in\{{\sf c},{\sf K3},{\sf N3},{\sf Nve}\}$.  Then there exists a grounded truth set $Y_f$ and admissible valuation function $f$ such that for all $\varphi\in{\sf Sent}_{\lt^{\rhd}}$
\begin{align*}(F,X\times Y_f,H_{\Phi_{\sf e}}),J_f,w\Vdash\varphi&\text{ iff }(D,X\times Y_f,H_{\Phi_{\sf e}}),J_f,w\Vdash\T\gn\varphi\end{align*}
for all $J\in X$ and $w\in W$.\end{prop}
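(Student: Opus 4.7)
The plan is to deduce Proposition \ref{MCFPTM} from Proposition \ref{MCFP} in essentially the same way that Corollary \ref{CFPTM} was extracted from Proposition \ref{CFPT}, with the extra world parameter threaded through but otherwise leaving the argument unchanged. First I would invoke Proposition \ref{MCFP} to obtain a valuation function $g$ and a grounded truth set $Z_g\subseteq\mathcal{B}^{\T}_{\mathfrak{M}}$ such that $\theta^{\Phi_{\sf e}}_{\mathfrak{M}}(Z_g,g)=g$ and $\Theta^{\Phi_{\sf e}}_{\mathfrak{M}}(Z_g)=Z_g$. Then I would set $Y_f:=Z_g$ and $f:=g$, so that the desired Kripkean truth structure is $(F,X\times Y_f,H_{\Phi_{\sf e}})$.

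The second step is to unpack what $f$ being a fixed point of $\theta^{\Phi_{\sf e}}_{\mathfrak{M}}$ means in the modal setting. By the (modally generalised) definition of $\theta^{\Phi_{\sf e}}_{\mathfrak{M}}$, we have for every $J\in X$ and $w\in W$
\begin{align*}
f(w,J)\;=\;[\theta^{\Phi_{\sf e}}_{\mathfrak{M}}(Y_f,f)](w,J)\;=\;\{\varphi\in{\sf Sent}_{\lct}\,|\,(F,X\times Y_f,H_{\Phi_{\sf e}}),J_f,w\Vdash\varphi\}.
\end{align*}
On the other hand, by the modal analogue of Definition \ref{adrel}, the interpretation of $\T$ at a truth-interpretation $J_f$ relative to world $w$ is exactly $f(w,J)$, so the atomic clause of Definition \ref{MTSV} (i.e. the world-relative version of Definition \ref{TSV}) gives
\begin{align*}
(F,X\times Y_f,H_{\Phi_{\sf e}}),J_f,w\Vdash\T\gn\varphi\;\text{iff}\;\varphi\in f(w,J).
\end{align*}
Chaining these two equivalences yields the biconditional claimed in the statement.

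The main obstacle is not this final unpacking, which is a one-line argument, but the input we are drawing on, namely Proposition \ref{MCFP}. Establishing Proposition \ref{MCFP} requires modal analogues of Lemmas \ref{MonT} and \ref{CPL}: one must check that the minimal valuation $f_{\sf k}\in\mathcal{B}^{\T}_{\mathfrak{M}}$ built from the modal strong Kleene Kripke jump of \cite{haw09,ste14b,ste15a} satisfies $f_{\sf k}\leq\theta^{\Phi_{\sf e}}(Y^{\T}_{f_{\sf k}},f_{\sf k})\in Y^{\T}_{f_{\sf k}}$ world-by-world, that the ``Henkin-style'' extension argument used for ${\sf e}={\sf Nve}$ still produces ${\sf K3\T}$-saturated valuations at every world (since the Kripke jump fixed-point property on $\mathcal{L}_{\Box}$ must be preserved across the modal frame), and that compactness of $\Phi_{\sf e}$ on $Y^{\T}_{f_{\sf k}}$ goes through in the enlarged domain $W\times X$. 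None of these is conceptually new relative to the non-modal case, but each requires the additional observation that admissibility of a valuation function is defined pointwise in $w$, so that the proofs of Lemmas \ref{MonT} and \ref{CPL} lift uniformly once we restrict $\mathcal{B}^{\T}_{\mathfrak{M}}$ to those $f$ whose $\mathcal{L}_{\Box}$-restriction at each world is a modal ${\sf K3}$-fixed point.
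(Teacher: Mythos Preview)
Your proposal is correct and matches the paper's approach: the paper explicitly omits proofs in the modal section, stating that they are ``immediate generalizations of those of Sections \ref{TS} and \ref{KTS}'', so Proposition \ref{MCFPTM} is meant to follow from Proposition \ref{MCFP} exactly as Corollary \ref{CFPTM} follows from Proposition \ref{CFPT}, which is precisely the deduction you carry out with the world parameter threaded through.
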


As in the non-modal case we can find a reflexive Kripkean truth structure by constructing a $\Phi_{\sf e}$-generated substructure:

\begin{cor}Let ${\sf e}\in\{{\sf K3},{\sf N3},{\sf Nve}\}$ and $(F,X\times Y_f,H_{\Phi_{\sf e}})$ be a modal Kripkean Truth structure and let $Y^{\leq_{\Phi_{\sf e}}}_f:=\{g\in Y_f\,|\,f\leq_{\Phi_{\sf e}} g\}$. Then the structure $(F,X\times Y^{\leq_{\Phi_{\sf e}}}_f,H_{\Phi_{\sf e}})$ is a modal Kripkean truth structure such that $H_{\Phi_{\sf e}}$ is reflexive on $X\times Y^{\leq_{\Phi_{\sf e}}}_f$.\end{cor}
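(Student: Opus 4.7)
The plan is to mirror the proof of Corollary \ref{KSS} in the non-modal setting, so three things must be verified for $(F,X\times Y^{\leq_{\Phi_{\sf e}}}_f,H_{\Phi_{\sf e}})$: that it is a grounded modal truth structure with $f$ as $\leq$-minimal valuation, that $H_{\Phi_{\sf e}}$ is reflexive on $X\times Y^{\leq_{\Phi_{\sf e}}}_f$, and that the Kripkean naivity equivalence transfers from the ambient structure. The first step is routine: $Y^{\leq_{\Phi_{\sf e}}}_f\subseteq Y_f\subseteq\mathcal{B}_\mathfrak{M}$ ensures admissibility, and the third clause of Definition \ref{ordac} gives $f\leq g$ for every $g\in\Phi_{\sf e}(f)\cap Y_f=Y^{\leq_{\Phi_{\sf e}}}_f$, so $f$ is $\leq$-minimal. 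Membership $f\in Y^{\leq_{\Phi_{\sf e}}}_f$ requires $f\in\Phi_{\sf e}(f)$; by the argument underlying Proposition \ref{MCFP}, the fixed point $f$ is itself ${\sf e}$-saturated for each ${\sf e}\in\{{\sf K3},{\sf N3},{\sf Nve}\}$, hence $f\leq_{\Phi_{\sf e}}f$.

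For reflexivity I would observe that every $g\in Y^{\leq_{\Phi_{\sf e}}}_f$ belongs to $\Phi_{\sf e}(f)$ and is therefore ${\sf e}$-saturated. Since $g\leq g$ holds trivially, we obtain $g\in\Phi_{\sf e}(g)$, i.e., $g\leq_{\Phi_{\sf e}}g$. Combined with the reflexivity of $H$ on $X$, this yields $(J_g,J_g)\in H_{\Phi_{\sf e}}$ for every $J\in X$ and every $g\in Y^{\leq_{\Phi_{\sf e}}}_f$.

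The real obstacle is naivity preservation, i.e., showing that for every $\varphi\in\lct$, $w\in W$ and $J\in X$ we have $(F,X\times Y^{\leq_{\Phi_{\sf e}}}_f,H_{\Phi_{\sf e}}),J_f,w\Vdash\varphi$ iff $(F,X\times Y^{\leq_{\Phi_{\sf e}}}_f,H_{\Phi_{\sf e}}),J_f,w\Vdash\T\gn\varphi$. The strategy is to first establish, by induction on the positive complexity of $\varphi$, a modal analogue of Lemma \ref{gensub}: for every $g\in Y^{\leq_{\Phi_{\sf e}}}_f$, every $J\in X$ and every $w$, truth of $\varphi$ at $J_g,w$ coincides in $(F,X\times Y_f,H_{\Phi_{\sf e}})$ and in $(F,X\times Y^{\leq_{\Phi_{\sf e}}}_f,H_{\Phi_{\sf e}})$. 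The only non-trivial inductive case is the conditional (and, in the ordering semantics, the variably strict $\rhd$), and the key observation there is the transitivity of $\leq_{\Phi_{\sf e}}$: if $f\leq_{\Phi_{\sf e}}g$ and $g\leq_{\Phi_{\sf e}}h$ then $f\leq_{\Phi_{\sf e}}h$, so $h\in Y^{\leq_{\Phi_{\sf e}}}_f$, and the set of $H_{\Phi_{\sf e}}$-accessible truth interpretations from $J_g$ is the same in both structures. Once the modal analogue of Lemma \ref{gensub} is in hand, the naivity equivalence of Proposition \ref{MCFPTM} at $J_f$ in the ambient structure transfers verbatim to the restricted one, and in particular $\theta^{\Phi_{\sf e}}_\mathfrak{M}(Y^{\leq_{\Phi_{\sf e}}}_f,f)=f$, so the restricted structure is a modal Kripkean truth structure.
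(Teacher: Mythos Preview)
Your proposal is correct and follows exactly the route the paper intends. The paper gives no proof for this corollary (nor for its non-modal counterpart, Corollary \ref{KSS}), remarking only that the modal results are immediate generalizations of Sections \ref{TS} and \ref{KTS}; your three-step verification---groundedness with $f\in\Phi_{\sf e}(f)$, reflexivity via ${\sf e}$-saturation of every $g\in\Phi_{\sf e}(f)$, and transfer of naivity via the transitivity of $\leq_{\Phi_{\sf e}}$---is precisely the argument that underlies both corollaries.
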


The propositions establish that the strong Kleene supervaluation framework can be used to construct Kripkean truth models for natural language conditionals as well as ``logical'' conditionals, i.e., conditionals that are use to formalize conditional reasoning in logic and mathematics.This suggests that the framework is well suited for interpreting interesting and expressively rich fragments of natural language, i.e., fragments that allow for self-reference but also enable us to express different conditionals and conditional reasoning.

\section{Conclusion and Outlook}
In this paper we set out to apply a Kripke's theory of truth to languages that comprise adequate logical conditionals as well as indicative and subjunctive conditionals. This led us to the framework of strong Kleene supervaluation and modal strong Kleene supervaluation for which we showed the existence of Kripkean truth models. Importantly, the framework is much wider: strong Kleene supervaluation can be applied not only to conditionals but to other non-monotone notions. Indeed, as we discuss in companion paper the framework lends itself particular well to giving a semantics and truth theory for $\langle 1,1\rangle$-quantifiers.

There remain open questions and many possible further research avenues to explore.  For one, as mentioned in Section \ref{truththeo} in future work we intend to provide fixed point results for non-compact admissibility relations according to which admissible precisifications are required to be $\omega$-complete saturated sets that have the Henkin property. This would guarantee that both the universal and the existential quantifier commute with the truth predicate and open up the possibility of attractive axiomatic truth theories formulated in ${\sf N3}$ Nelson logic. We will discuss such theories in a sequel to this study. For another, it seems interesting to investigate variants of our framework that use alternative falsity conditions for the conditional, in particular, the falsity condition employed in intuitionistic logic. Arguably, this would re-establish symmetry between the ruth and falsity condition of the conditional. Moreover, if symmetric strong Kleene logic ${\sf KS3}$ is assumed, the resulting conditional should allow for contraposition. However, the details require carefully study and are left for future research.

\bibliography{/users/js16807/Dropbox/Uni/Bibliography/LITA-3}

\appendix

\section{Sequent Calculi}\label{SeqC}
We introduce the sequent calculi of the various logics we appealed to throughout the paper, and state some important properties of these calculi. We start with strong Kleene logic ${[\sf K3}$.
\begin{de}[${\sf K3}$-Sequent calculus]Let $\mathcal{L}$ be an arbitrary first-order formula whose logical constants include $\neg,\wedge,\forall$. Let $\Gamma,\Delta\subset{\sf Frml}_\mathcal{L}$ be finite sets of formulae and $\varphi,\psi\in{\sf Frml}_\mathcal{L}$. The ${\sf K3}$-calculus is given by the following rules:
\begin{align*}
	& \text{{\small{\sc (ax)}}}\;\;\;\AxiomC{$\Gamma, \varphi \Rightarrow\varphi,\Delta$}\noLine\UnaryInfC{\textup{for $\varphi$ a literal}}  \DisplayProof
		&&\AxiomC{$\Gamma\Rightarrow\Delta,\varphi$}\AxiomC{$\varphi,\Gamma\Rightarrow\Delta$}\Llb{cut}
		\BinaryInfC{$\Gamma \Rightarrow\Delta$}\DisplayProof\\[10pt]
	&\AxiomC{$\Gamma\Rightarrow\chi,\Delta$}\Llb{$\neg$l}\UnaryInfC{$\Gamma,\neg\chi\Rightarrow\Delta$}\DisplayProof&&\\[10pt]
	& \AxiomC{$\Gamma,\varphi\Rightarrow\Delta$}\Llb{dn-l}
	\UnaryInfC{$\Gamma,\neg \neg \varphi \Rightarrow\Delta$}
	\DisplayProof
		&& \AxiomC{$\Gamma\Rightarrow\varphi,\Delta$}\Llb{dn-r}
	\UnaryInfC{$\Gamma\Rightarrow\neg \neg \varphi,\Delta$}
	\DisplayProof\\[10pt]
	& \AxiomC{$\Gamma,\neg\varphi\Rightarrow\Delta$}
		\AxiomC{$\Gamma\Rightarrow\neg\psi,\Delta$}\Llb{$\neg\land$l}
		\BinaryInfC{$\Gamma,\neg (\varphi\land \psi) \Rightarrow\Delta$}\DisplayProof
		&&\AxiomC{$\Gamma\Rightarrow\neg\varphi_i, \Delta$}\Llb{$\neg\land${r}}\RightLabel{$i=0,1$}
	\UnaryInfC{$\Gamma\Rightarrow\Delta,\neg (\varphi_0\land \varphi_1)$}\DisplayProof\\[10pt]
	&\AxiomC{$\Gamma,\varphi_i\Rightarrow\Delta$}\Llb{$\land$l}\UnaryInfC{$\Gamma,\varphi_0\land \varphi_1\Rightarrow\Delta$}\DisplayProof
		&&\AxiomC{$\Gamma\Rightarrow\varphi,\Delta$}\AxiomC{$\Gamma\Rightarrow\psi,\Delta$}\Llb{$\land$r}\BinaryInfC{$\Gamma \Rightarrow\Delta,\varphi\land \psi$}\DisplayProof\\[10pt]
			&\AxiomC{$\Gamma,\neg\varphi(y)\Rightarrow\Delta$}\Llb{$\neg\forall$l}\UnaryInfC{$\Gamma,\neg\forall v\varphi(v)\Rightarrow\Delta$}\DisplayProof&&\AxiomC{$\Gamma\Rightarrow\neg\varphi(t),\neg\forall v\varphi(v),\Delta$}\Llb{$\neg\forall$r}\UnaryInfC{$\Gamma\Rightarrow\neg\forall v\varphi(v),\Delta$}\DisplayProof\\[10pt]
		&\AxiomC{$\Gamma,\forall v\varphi(v),\varphi(t)\Rightarrow\Delta$}\Llb{$\forall$l}\UnaryInfC{$\Gamma,\forall v\varphi(v)\Rightarrow\Delta$}\DisplayProof&&\AxiomC{$\Gamma\Rightarrow\varphi(y),\Delta$}\Llb{$\forall$r}\UnaryInfC{$\Gamma\Rightarrow\forall v\varphi(v),\Delta$}\DisplayProof
		\end{align*}
The formula $\chi$, in the rule $(\neg\text{{\sc l}})$, is required to be a literal, i.e., an atomic or negated atomic formula of the language. In the rules for the quantifiers $y$ is required to be an eigenvariable and $v$ needs to be free for $y$ ($t$ respectively) in $\varphi$. 
\end{de}

We note that ({\sc ax}) and $(\neg\text{{\sc l}})$ can be lifted to all formulas of the language. Constant domain Nelson logic ${\sf N3}$ extends ${\sf K3}$ by rules for a conditional connective $\rightarrow$. A rigorous sequent calculus, and completeness proof was given by \cite{tho69} relative to a constant domain Kripke semantics. Our calculus diverges from Thomason's formulation and generalizes a single conclusion formulation for the propositional fragment by \cite{wan91}. As we argue below our calculus is equivalent to that of Thomason.

\begin{de}[{\sf N3}--Nelson Logic]${\sf N3}$ extends the sequent calculus of ${\sf K3}$ by the following rules for the conditional:
\begin{align*}
&\AxiomC{$\Gamma,\chi\Rightarrow\Delta$}\Llb{$\neg\!\!\rightarrow$r}\UnaryInfC{$\Gamma,\neg(\varphi\rightarrow\psi)\Rightarrow\Delta$}\DisplayProof&&\AxiomC{$\Gamma\Rightarrow \varphi,\Delta$}\AxiomC{$\Gamma\Rightarrow\neg \psi,\Delta$}\Llb{$\neg\!\!\rightarrow$r}\BinaryInfC{$\Gamma\Rightarrow\neg(\varphi\rightarrow\psi),\Delta$}\DisplayProof\\[10pt]
&\AxiomC{$\Gamma\Rightarrow \varphi,\Delta$}\AxiomC{$\Gamma,\psi\Rightarrow\Delta$}\Llb{$\rightarrow$l}\BinaryInfC{$\Gamma,\varphi\rightarrow\psi\Rightarrow\Delta$}\DisplayProof
&&\AxiomC{$\Gamma,\varphi\Rightarrow\psi$}\Llb{$\rightarrow$r}\UnaryInfC{$\Gamma\Rightarrow \varphi\rightarrow\psi,\Delta$}\DisplayProof
\end{align*}
with $\chi\in\{\varphi,\neg\psi\}$.
\end{de}

All initial sequents and rules are valid in Thomason's \citeyearpar{tho69} Kripke semantics for constant domain ${\sf N3}$. Moreover, all structural rules of Thomason's calculus are derivable in our calculus and one can also show that Thomason's elimination rules are admissible in our calculus. Thomason's completeness result then shows that the two calculi are equivalent.

\begin{de}[${\sf K3}\T$--${\sf K3}$ truth logic ]${\sf K3}\T$ extends the sequent calculus of ${\sf K3}$ by the following rules for the truth predicate:
\begin{align*}&\AxiomC{$\Gamma,\neg\varphi\Rightarrow\Delta$}\Llb{$\neg\T$l}\UnaryInfC{$\Gamma,\neg\T\gn\varphi\Rightarrow\Delta$}\DisplayProof&&\AxiomC{$\Gamma\Rightarrow\neg\varphi,\Delta$}\Llb{$\neg\T$r}\UnaryInfC{$\Gamma\Rightarrow\neg\T\gn\varphi,\Delta$}\DisplayProof\\[10pt]
&\AxiomC{$\Gamma,\varphi\Rightarrow\Delta$}\Llb{$\T$l}\UnaryInfC{$\Gamma,\T\gn\varphi\Rightarrow\Delta$}\DisplayProof&&\AxiomC{$\Gamma\Rightarrow\varphi,\Delta$}\Llb{$\T$r}\UnaryInfC{$\Gamma\Rightarrow\T\gn\varphi,\Delta$}\DisplayProof
\end{align*}
\end{de}
${\sf K3}\T$ is the ``logic'' corresponding to the $\Phi_{\sf Nve}$-admissibility condition and ultimately a variation on Kremer's \citeyearpar{kre88} logic of truth.

Finally, we outline how to extend all these calculi by a classical identity predicate.
\begin{de}[Identity]The rules for identity in ${\sf K3},{\sf N3}$, and ${\sf K3}\T$ are as follows for $\mathcal{L}$-terms $s,t$:
\begin{align*}
&\AxiomC{$\Gamma,t=t\Rightarrow\Delta$}\Llb{${\rm Ref}$}\UnaryInfC{$\Gamma\Rightarrow\Delta$}\DisplayProof&&\\[10pt]
&&&\AxiomC{$\Gamma,s=t\Rightarrow\Delta$}\Llb{$=\!\neg$r}\UnaryInfC{$\Gamma\Rightarrow s\neq t,\Delta$}\DisplayProof\\[10pt]
&\AxiomC{$\Gamma,\varphi(t)\Rightarrow\Delta$}\Llb{${\rm Rep}$}\UnaryInfC{$\Gamma,s=t,\varphi(s)\Rightarrow\Delta$}\DisplayProof&&
\end{align*}
\end{de}
As suggested, over ${\sf K3}$ and its extension the rules for identity have the effect of making identity classical, i.e., one can prove
\begin{align*}s=t,s\neq t&\Rightarrow\\
&\Rightarrow s=t,s\neq t\end{align*}
for all terms $s$ and $t$. 

\section{Proof of Lemma \ref{LB}}\label{compl}
As mentioned the general outlines of the proof are due to \cite{wel15} and, largely, follows the presentation of Welch's result in \cite{fhks15}. However, some minor tweaks need to be made to carry out the argument in the strong Kleene supervaluation setting. Let ${\sf Seq}$ be the set of codes of finite sequences. Then we know that there exists a recursive relation $R(u,n)\subseteq {\sf Seq}\times\mathbb{N}$ such that
\begin{flalign*}&(\dagger)&&n\in A\leftrightarrow\forall l\in \leftidx{^\omega}{\omega}\,\exists k_0\,\forall k\geq k_0\,(\neg R((l\restr k),n)),&&\end{flalign*}where $(l\restr k)$ is short for the code of the finite sequence  $( l(0),...,l(k) )$.\footnote{This follows from the following normal-form theorem by Kleene \citep[cf.][\S 16.1, Corollary V]{rog67}. Let $A$ be $\Pi^1_1$-set and $A$ a recursive relation. Then
\begin{align*}n\in A&\Leftrightarrow\forall l\in \leftidx{^\omega}{\omega}\,\exists k(R((l\restr k),n)).\end{align*}
See \cite{wel15} for further explanation.}
Since for $J\in X$ the set $[\theta_\mathfrak{M}(f)](J)$ is supposed to consist of G\"odel numbers of sentences we now introduce an alternative coding of finite sequences. A sequence $u=(u_0,\ldots,u_n)$ will now be coded by 
\begin{align*}\#((\underbrace{\kappa\wedge\ldots\wedge\kappa}_{u_0+1\text{-times}})\vee\ldots\vee(\underbrace{\kappa\wedge\ldots\wedge\kappa}_{u_n+1\text{-times}}))\end{align*}
where $\kappa$ denotes the standard Curry sentence introduced at the beginning of the paper.\footnote{\cite{wel15} does not work with the Curry sentence but the Liar sentence. The switch is required since, if we are working with $\Phi_{\sf Nve}, \Phi_{\omega{\sf Nve}}$, or, more generally, start the construction with the set $Y_{\T}$, no admissible precisification of a  $\theta_\mathfrak{M}$-sound valuation function $f$ will contain the Liar sentence (or its negation).}

 We denote this deviant coding by $^*$. The resulting set of sequence numbers ${\sf Seq}^*$ remains recursive and we can replace $R$ by a recursive relation $R^*$ in $(\dagger)$ such that
\begin{flalign*}&(\ddagger)&&n\in A\Leftrightarrow\forall l\in \leftidx{^{\omega}}{\omega}\,\exists k_0\,\forall k\geq k_0\,(\neg R^*((l\restr k)^*,n))&&\end{flalign*}
Next let $\sigma_n$ be the $\lt$-sentence

\begin{align*}[\exists u\,\T^*(u)\wedge\forall u,v\,(\T^*(u)\wedge\T^*(v)\rightarrow((u\subseteq v\vee v\subseteq u)\wedge\exists u'\,(\T^*(u')\wedge u\subset u')]\rightarrow\\
\exists u\,(\T^*(u)\wedge\neg R^*(u,\overline{n}))\end{align*} 
where $\T^*(u)$ is short for $\T u\wedge {\sf Seq}^*(u)$, and where ${\sf Seq}^*$ and $R^*$ need to be understood in terms of their representing formulae.
We now show for $J\in X$ that 	
\begin{flalign*}&({\rm Claim})&&n\in A\Leftrightarrow\#\sigma_n\in[\theta_\mathfrak{M}(f)](J)&&\end{flalign*}
For the left-to-right direction we assume $n\in A$. We need to show that $\mathfrak{M}_{\T},J^f\Vdash\sigma_n$, that is, we need to show that 
\begin{align*}\mathfrak{M}_{\T},J'^g\Vdash\exists u\,\T^*(u)\wedge\forall u,v\,(\T^*(u)\wedge\T^*(v)\rightarrow((u\subseteq v\vee v\subseteq u)\wedge\exists u'\,(\T^*(u')\wedge u\subset u')\end{align*}
implies
\begin{align*}\mathfrak{M}_{\T},J'^g\Vdash\exists u\,(\T^*(u)\wedge\neg R^*(u,\overline{n}))\end{align*}
for all truth interpretations $J'^g$ such that $(J^f,J'^g)\in H_{\Phi_{\sf e}}$. If $g(J')$ does not contain the codes of the finite initial segments of some infinite sequence then there is nothing to show because the antecedent of $\sigma_n$ will not be true. Assume otherwise, i.e., $(l\restr k)^*\in g(J')$ for some function $l$, for infinitely many $k$. By $n\in A$ and $(\ddagger)$ it follows that there must be an $k\in\omega$ such that $\neg R^*((l\restr k)^*, n)$. This establishes the left-to-right direction for ${\sf e}\in\{{\sf c},{\sf K3},{\sf N3},{\sf Nve},\omega{\sf N3},\omega{\sf Nve}\}$.

For the converse direction assume $n\not\in A$. We need to show that $\#\sigma_n\not\in[\theta_\mathfrak{M}(f)](J)$, that is we need to find a valuation function $g$ such that $(J^f,J'^g)\in H_{\Phi_{\sf e}}$ for some $J'\in X$ with $(J,J')\in H$: 
\begin{align*}\mathfrak{M}_{\T},J'^g\Vdash[\exists u\,\T^*(u)\wedge\forall u,v\,(\T^*(u)\wedge\T^*(v)\rightarrow((u\subseteq v\vee v\subseteq u)\wedge\exists u'\,(\T^*(u')\wedge u\subset u')].\end{align*}
but
\begin{align*}\mathfrak{M}_{\T},J'^g\not\Vdash\exists u\,(\T^*(u)\wedge\neg R^*(u,\overline{n})).\end{align*}
From $n\not\in A$ we know that there exists a function $l$ such that $\forall k R^*((l\restr k)^*,n)$. By assumption and Proposition \ref{GFP} we know that there is a $\theta^{\Phi_{\sf e}}_{\mathfrak{M}}$-fixed point $g_0$ such that $(J^f,J'^{g_0})\in H_{\Phi_{\sf e}}$. For ${\sf e}={\sf c}$ it suffices to observe that $\#\kappa\not\in g_0(J)$ for $J\in X$ and that $g(J):=g_0(J)\cup \{(l\restr k)^*\,|\,k\in\mathbb{N}\}$ is consistent. By transitivity of $\Phi$ it follows that $g\in\Phi_{\sf c}(f)$,  which completes the proof for the case ${\sf e}={\sf c}$.

Let ${\sf e}\in\{{\sf K3},{\sf N3},{\sf Nve}\}$. As discussed in Remark \ref{Transp} there is a valuation function $g\in\Phi_{\sf e}(g_0)$ such that $\kappa\in g_0(J')$. By the properties of $\Phi_{\sf e}$ this implies that $\{(l\restr k)^*\,|\,k\in\mathbb{N}\}\subseteq g(J')$. By transitivity of $\Phi_e$ this establishes (Claim) and concludes the proof.

\end{document}